\documentclass[12pt]{amsart}
\usepackage[left=3cm,top=3cm,right=3cm]{geometry}
\usepackage{amsmath}
\usepackage{amssymb}
\usepackage{amscd}
\usepackage[lined,algonl,boxed,norelsize]{algorithm2e}
\let\chapter\undefined
\usepackage{hyperref}



\numberwithin{equation}{section}

\newtheorem{theorem}[equation]{Theorem}
\newtheorem{lemma}[equation]{Lemma}

\newtheorem{proposition}[equation]{Proposition}
\newtheorem{corollary}[equation]{Corollary}
\newtheorem*{theorem*}{Theorem}

\theoremstyle{definition}
\newtheorem*{definition}{Definition}

\newtheorem{construction}[equation]{Construction}

\theoremstyle{remark}
\newtheorem{remark}[equation]{Remark}


\newcommand{\E}{{\mathcal E}}

\newcommand{\A}{{\mathcal A}}

\newcommand{\M}{{\mathcal M}}


\def\QQ{{\mathbb Q}}

\def\RR{{\mathbb R}}
\def\ZZ{{\mathbb Z}}


\newcommand{\Div}{\operatorname{Div}}
\newcommand{\divisor}{\operatorname{div}}
\newcommand{\Jac}{\operatorname{Jac}}
\newcommand{\Prin}{\operatorname{Prin}}

\newcommand{\Pic}{\operatorname{Pic}}

\newcommand{\Hom}{\operatorname{Hom}}

\newcommand{\outdeg}{{\operatorname{outdeg}}}

\newcommand{\diam}{\operatorname{diam}}
\newcommand{\CPA}{\operatorname{CPA}}
\newcommand{\BDV}{\operatorname{BDV}}
\newcommand{\Meas}{\operatorname{Meas}}

\begin{document}

\title{Chip-firing games, potential theory on graphs, and spanning trees}

\date{\today}

\subjclass[2010]{05C05, 05C25, 05C50, 05C57, 05C85}

\author{Matthew Baker}
\email{mbaker@math.gatech.edu}
\author{Farbod Shokrieh}
\email{shokrieh@math.gatech.edu}
\address{Georgia Institute of Technology\\
Atlanta, Georgia 30332-0160\\
USA}

\begin{abstract}
We study the interplay between chip-firing games and potential theory on graphs, characterizing 
reduced divisors ($G$-parking functions) on graphs as the solution to an energy (or potential) minimization problem and 
providing an algorithm to efficiently compute reduced divisors.  Applications include
an ``efficient bijective'' proof of Kirchhoff's matrix-tree theorem and a new algorithm for finding random spanning trees.
The running times of our algorithms are analyzed using potential theory, and we show that the bounds thus obtained 
generalize and improve upon several previous results in the literature. We also extend some of these considerations to metric graphs.

\end{abstract}

\thanks{We would like to thank Omid Amini, Jan Draisma, Xander Faber, Greg Lawler, Ye Luo, Sam Payne, and the anonymous referees for their comments on an earlier version of this manuscript. The authors' work was supported in part by NSF grants DMS-0901487 and CCF-0902717.}

\maketitle

\section{Introduction}
\label{IntroSection}

Chip-firing games on graphs arise in several different fields of research: in theoretical physics they relate to the ``abelian sandpile" or ``abelian avalanche" models in the context of self-organized critical phenomena (\cite{BTW88,Dhar90,Gabrielov93}); in arithmetic geometry, they appear implicitly in the study of component groups of N{\'e}ron models of Jacobians of algebraic curves (\cite{RaynaudPicard, Lorenzini89, BN1}); and in algebraic graph theory they relate to the study of flows and cuts in graphs (\cite{BacherHN97,Biggs97,Biggs99}).  We recommend the recent survey article 
\cite{WhatSandpile} for a short but more detailed overview of the subject.

\medskip

There is a close connection between chip-firing games and potential theory on graphs. In this paper, we explore some new aspects of this interplay. Conceptually, this connection should not come as a surprise; in both settings the Laplacian operator plays a crucial rule. However, in chip-firing games an extra ``integrality condition'' is imposed; in the language of optimization theory, chip-firing games lead to integer programing problems whose associated linear programming relaxations can be solved using potential theory on graphs. Our potential theory methods allow us to prove some new results about chip-firing games and to give new proofs
and/or generalizations of some known results in the subject. We also show that certain ``ad-hoc'' techniques in the literature are naturally explained or unified by our approach.

\medskip

Our main potential-theoretic tool is the {\em energy pairing} (see \S\ref{energySection}), which is a canonical positive definite bilinear form defined on the set of divisors of degree zero%
\footnote{A {\em divisor} on a finite graph $G$ is an element $\sum_{v \in V(G)} a_v (v)$ of the free abelian group $\Div(G)$ on the set $V(G)$ of vertices of $G$.
The {\em degree} of a divisor is the sum of the $a_v$ over all $v$.} 
on $G$. This pairing can be computed using any {\em generalized inverse} of the Laplacian matrix of $G$. The energy pairing can be used to define two functions $\E_q$ and $b_q$ (for a fixed vertex $q$) on $\Div(G)$ which interact in a useful way with {\em chip-firing moves}; 
after firing a set of vertices $A$ (not containing the exceptional vertex $q$), the value of $\E_q$ goes down by at least the size of the associated cut 
(see Proposition~\ref{energyproposition}), and the value of $b_q$ goes down by {\em exactly} the size of $A$ 
(see Proposition~\ref{energyproposition2}). 

\medskip

Chip-firing moves induce a natural equivalence relation on $\Div(G)$ called {\em linear equivalence} of divisors. 
If we once again fix a vertex $q$, then a particularly nice set of representatives for linear equivalence classes is given by the {\em $q$-reduced divisors} (see \S\ref{ReducedDivisorsSubsection}). We show that the $q$-reduced divisor equivalent to a given divisor $D$ can be characterized as the unique element of $|D|_q$ (the set of divisors $D' = \sum_v a'_v (v)$ linearly equivalent to $D$ for which $a'_v \geq 0$ whenever $v \neq q$) minimizing the functional $\E_q$; see Theorem~\ref{energyThm}..  A similar result holds with $\E_q$ replaced by $b_q$; see Theorem~\ref{energyThm2}.  
Using this result we are able to give a new proof of the important fact that there is a unique $q$-reduced divisor in each
linear equivalence class.

\medskip

In order to check whether or not a given divisor is $q$-reduced, according to the definition (given in \S\ref{ReducedDivisorsSubsection}), one needs to check a certain inequality for {\em all subsets} of $V(G) \backslash \{ q \}$.
But there is in fact a much more efficient procedure called {\em Dhar's burning algorithm} (after Dhar \cite{Dhar90}); 
see \S\ref{DharBurningAlgorithm}. Using a modification of Dhar's burning algorithm, it is possible to obtain an ``activity preserving'' bijection between $q$-reduced divisors and spanning trees of $G$; this was originally discovered (using
different terminology) by Cori and Le Borgne \cite{CoriLeBorgne03}.  
In \S\ref{CoriLeBorgneSection} we formulate the Cori-Le Borgne algorithm in the language of reduced divisors.

\medskip

We then turn to the problem of {\em computing} the $q$-reduced divisor equivalent to a given divisor. Dhar's algorithm shows that one can efficiently {\em check} whether a given divisor is $q$-reduced;
we show in \S\ref{MainAlgorithmSubsection} (specifically Algorithm~\ref{MainAlgorithm}) that one can efficiently {\em find} the 
$q$-reduced divisor equivalent to a given divisor as well.  
Algorithm~\ref{MainAlgorithm} can be viewed as the {\em search} version of Dhar's {\em decision} algorithm.  
The main challenge here is the running-time analysis; we use potential theory (specifically, the function $b_q$) 
to give a bound on the running time of Algorithm~\ref{MainAlgorithm}.
As we have already mentioned, the key point is that after firing a set $A \subseteq V(G) \backslash \{ q \}$,
the value of $b_q$ goes down by exactly the size of $A$; this makes the function $b_q$ a powerful tool for running-time analysis in chip-firing processes. The seemingly different techniques of Tardos \cite{Tardos}, Bj{\"o}rner--Lov{\'a}sz--Shor \cite{Lovasz91}, Chung--Ellis \cite{chung99}, van den Heuvel \cite{Heuvel01}, and Holroyd--Levine--M{\'e}sz{\'a}ros--Peres--Propp--Wilson \cite{Rotor} all give bounds which are specializations of the running-time bound which we derive using $b_q$; see Remark~\ref{TimeAnalRmk}.

\medskip

We next turn to applications of Algorithm~\ref{MainAlgorithm}. The first application (see \S\ref{KirchhoffSection}) is an ``efficient bijective'' proof of Kirchhoff's celebrated matrix-tree theorem (stated in a more canonical way than usual in Theorem~\ref{thm:CanonicalKirchhoff}). The efficient bijective matrix-tree theorem provides a new approach to the 
{\em random spanning tree} problem, which we state in \S\ref{SpanningTreesSubsection}. This problem has been extensively studied in the literature and there are essentially two known types of algorithms for it: determinant based algorithms (e.g. \cite{Guenoche83,CMN96,Kulkarni90}) and random walk based algorithms (e.g. \cite{Broder89,Aldous90, Wilson96, KM09} and \cite[Chapter 4]{LyonsPeres09}).  See Remark~\ref{RandomTreeSellingPoints} for possible advantages of our new approach.

\medskip

Finally, we study some analogous questions in the context of {\em metric graphs} (or ``abstract tropical curves'').
While potential theory on finite graphs can be developed purely in the context of linear algebra, potential theory on metric graphs is more conveniently formulated in terms of measure theory. We assume in Appendix~\ref{MetricGraphSection} that the reader is familiar with some basic facts concerning potential theory on metric graphs as explained in \cite{BX06, BakerRumely}.  Our main new result is Theorem~\ref{EnergyThm} (the metric graph analogue of Theorem~\ref{energyThm}); using it, we give a new proof of the existence and uniqueness of reduced divisors in the metric graph setting. We conclude with a discussion of Dhar's algorithm for metric graphs and Luo's search version of this theorem \cite{Luoye}.

\medskip

The paper is structured as follows. 
In \S\ref{NotationSection} we fix our notation.
In \S\ref{PotentialTheorySection} we recall some basic facts from potential theory on graphs and define the energy pairing. 
The functionals $\E_q$ and $b_q$ are introduced in \S\ref{ChipPotSec} and the interplay between chip-firing dynamics and potential theory on graphs is studied.
The algorithmic applications of this interplay, most notably Algorithm~\ref{MainAlgorithm}, are discussed in \S\ref{ComputingReducedSection}. 
Some applications of Algorithm~\ref{MainAlgorithm}, including an ``efficient bijective'' proof of Kirchhoff's matrix-tree theorem and a new algorithm for the random spanning tree problem, are discussed in \S\ref{applicationsSec}. Finally, in Appendix~\ref{MetricGraphSection} we extend some our results to metric graphs.


\section{Notation and Terminology}
\label{NotationSection}

Throughout this paper, a {\em graph} will mean a finite, connected, unweighted multigraph with no loop edges. The set of vertices of a graph $G$ is denoted by $V(G)$ and the set of edges by $E(G)$. We let $n = |V(G)|$ and $m = |E(G)|$. For $A \subseteq V(G)$ and $v \in A$, we denote by $\outdeg_A(v)$ the number of edges between $v$ and $V(G)\backslash A$.

\medskip

Let $\Div(G)$ be the free abelian group generated by $V(G)$. An element 
$\sum_{v \in V(G)} a_v (v) \in \Div(G)$ is called a {\em divisor} on $G$.  The coefficient $a_v$ of $(v)$ in $D$ is denoted by $D(v)$. For $D \in \Div(G)$, let $\deg(D) = \sum_{v \in V(G)} D(v)$ and let $\Div^0(G)$ be the subgroup of $\Div(G)$ 
consisting of divisors of degree zero. We denote by $\M(G) = \Hom(V(G), \ZZ) $ the group of integer-valued functions on the vertices. For $A \subseteq V(G)$,  $\chi_A \in \M(G)$ denotes the $\{0,1\}$-valued characteristic function of $A$; note that $\{\chi_{\{v\}}\}_{v \in V(G)}$ generates $\M(G)$.

\medskip

The {\em Laplacian operator} $\Delta : \M(G) \to \Div(G)$ is defined by 
$\Delta(f) = \sum_{v \in V(G)} \Delta_v(f) (v)$, where 
\[
\Delta_v(f) = \sum_{\{v,w\} \in E(G)} (f(v) - f(w)).
\]

\medskip

This definition naturally extends to all rational or real-valued functions on vertices.

\medskip

Let $\Prin(G)$ (the group of {\em principal divisors}) 
be the image of the  Laplacian operator $\Delta : \M(G) \to \Div(G)$.
It is easy to see that $\Prin(G) \subseteq \Div^0(G)$ and that both $\Prin(G)$ and $\Div^0(G)$ are free abelian groups of rank $n-1$.  As a consequence, the quotient group
\[
\Jac(G) = \Div^0(G) / \Prin(G)
\]
is finite. Following \cite{BacherHN97}, $\Jac(G)$ is called the {\em Jacobian} of $G$. 

\medskip 

Let $\{ v_1,\ldots, v_n \}$ be a labeling of $V(G)$. With respect to this labeling, the {\em Laplacian matrix} $Q$ associated to $G$ is the $n \times n$
matrix $Q =(q_{ij})$, where $q_{ii}$ is the degree of vertex $v_i$ and $-q_{ij}$ ($i \neq j$) is the number of edges connecting $v_i$ and $v_j$.
It is well-known (and easy to verify) that $Q$ is symmetric, has rank $n-1$, and that the kernel of $Q$ is spanned by $\mathbf{1}$, the all-ones vector (see, e.g., \cite{BiggsBook93,Bollobas98}).

\medskip

The labeling  $\{ v_1,\ldots, v_n \}$ of $V(G)$ induces isomorphisms between the abelian groups $\Div(G), \M(G)$, and the group of $n \times 1$ column vectors with integer coordinates. We use $[D]$ to denote the column vector corresponding to $D \in \Div(G)$ and $[f]$ to denote the column vector corresponding to $f \in \M(G)$. Under these isomorphisms, the Laplacian operator $\Delta : \M(G) \to \Div(G)$ corresponds to the matrix $Q$ thought of as a homomorphism $Q: \ZZ^n \to \ZZ^n$, i.e., for $f \in \M(G)$ we have $[\Delta(f)] = Q [f]$.

\section{Potential theory}
\label{PotentialTheorySection}

\subsection{Generalized inverses}
 \label{GeneralizedInverses}
A matrix has an inverse only if it is square and has full rank.
But one can define a ``partial inverse" for any matrix.

\medskip

\begin{definition}
Let $A$ be a matrix. A matrix $L$ satisfying $ALA=A$ is called a \textit{generalized inverse} of $A$.
\end{definition}

\medskip
Every matrix $A$ has at least one generalized inverse. In fact more is true: every matrix has a unique {\em Moore-Penrose pseudoinverse}%
\footnote{The Moore-Penrose pseudoinverse of $A$ is a generalized inverse of $A$ having the following additional properties: (i) $LAL=L$ and (ii) $AL$ and $LA$ are both symmetric.  See \cite{BenIsrael03} for additional details.}.

\medskip
Let $Q$ be the Laplacian matrix of a (connected) graph $G$. Since $Q$ has rank $n-1$, it does not have an inverse in
the usual sense.  But there are several natural ways to obtain generalized inverses for $Q$.  Here are some examples.

\begin{construction} \label{GeneralizeInvConst1}
Fix an integer $1 \leq i \leq n$ and let $Q_i$ be the invertible $(n-1) \times (n-1)$ matrix obtained from $Q$ by deleting $i^{\rm th}$ row and $i^{\rm th}$ column from $Q$ ($Q_i$ is sometimes called the {\em reduced Laplacian} of $G$ with respect to $i$).  Let $L_{(i)}$ be the $n \times n$ matrix obtained from $Q_i^{-1}$ by inserting a row of all zeros after the $(i-1)^{\rm st}$ row and inserting a column of all zeros after the $(i-1)^{\rm st}$ column. Then $L_{(i)}$ is a generalized inverse of $Q$. 
Indeed, one checks that $QL_{(i)}=I+R_{(i)}$, where $I$ is the $n \times n$ identity matrix and $R_{(i)}$ has all $-1$ entries in the $i^{\rm th}$ row and is zero elsewhere; as $R_{(i)}Q=0$, we obtain $QL_{(i)}Q=Q$.
\end{construction}

\begin{construction}\label{GeneralizeInvConst2}
Let $J$ be the $n \times n$ all 1's matrix. Then $Q+\frac{1}{n}J$ is nonsingular and $Q^+= (Q+\frac{1}{n}J)^{-1}-\frac{1}{n}J$ is a generalized inverse of $Q$. In fact, $Q^+$ is the Moore-Penrose pseudoinverse of $Q$, since one 
easily verifies that $QQ^{+}=Q^{+}Q=I-\frac{1}{n}J$ and $Q^{+}QQ^{+}=Q^{+}$.
\end{construction}

One can use the matrices $L_{(i)}$ from Construction~\ref{GeneralizeInvConst1} to obtain other generalized inverses for $Q$:

\begin{construction}

Let $\mu=(\mu_1, \mu_2, \cdots, \mu_n)^T \in \RR^n$  satisfy $\sum_{i=1}^{n}{\mu_i}=1$. Then 
$L_{\mu}=\sum_{i=1}^{n}{\mu_i L_{(i)}}$ is a generalized inverse for $Q$.  The matrix $L_{\mu}$ has the additional property that $L_{\mu}{\mu}=c_{\mu}\mathbf{1}$ for some $c_{\mu} \in \RR$; this follows from the calculation

\[QL_{\mu}{\mu}=(I+\sum_{i=1}^{n}{\mu_i R_{(i)}}){\mu}=\mu-\mu=0 \ .\]

\medskip

If $J$ is the all-1's matrix as in Construction~\ref{GeneralizeInvConst2}, then  $G_{\mu}=L_{\mu}-c_{\mu}J$ is also a generalized inverse and has the additional property that $G_{\mu}\mu=0$. The special case where $\mu_i=1/n$ for all $i$ gives the Moore-Penrose pseudoinverse $Q^+$ from the previous construction.
\end{construction}


\subsection{The $j$-function}
\label{jfunctions}
We can think of a graph $G$ as an electrical network in which each edge is a resistor having unit resistance.

\begin{definition}
For $p,q,v \in V(G)$, let $j_q(p,v)$ denote the electric potential at $v$ if one unit of current enters a network at $p$ and exits at $q$, with $q$ grounded (i.e., zero potential). 
\end{definition}

From a more mathematical point of view, $j_q(p,\cdot)$ is the unique (rational-valued) solution to the Laplace equation $\Delta f = (p)-(q)$
satisfying $f(q)=0$; alternatively, one can define $j_q(p,v)$ to be the $(p,v)$-entry of the matrix  $L_{(q)}$ in Construction~\ref{GeneralizeInvConst1} (see, e.g., \cite{ChinburgRumely,BX06}; note also that $(pv||q)$ in \cite{Biggs97} is the same as our $j_q(p,v)$ up to scaling). 

The following properties of the $j$-function are proved, for example, in \cite{BX06}:

\begin{itemize}
\item $j_q(p,q)=0$.
\item $j_q(p,v)=j_q(v,p)$.
\item $0 \leq j_q(p,v) \leq j_q(p,p)$.
\item $r(p,q)=j_q(p,p)=j_p(q,q)$, where  $r(p,q)$ denotes the effective resistance between $p$ and $q$.
\end{itemize}

\subsection{The energy pairing}
\label{energySection}
\medskip

Let $L$ be any generalized inverse of the Laplacian matrix $Q$. Then the bilinear form 
$\langle  \cdot \, , \cdot \rangle: \; \Div^0(G) \times \Div^0(G) \rightarrow \QQ$ defined by
\begin{equation}\label{EnergyPairingDef}
\langle D_1 , D_2 \rangle = [D_1]^{T} L [D_2]
\end{equation}
is independent of the choice of $L$.  (Indeed, since there are functions $f_i \in \Hom(V(G),\QQ)$
such that $[D_i]=Q[f_i]$ for $i=1,2$, we have $\langle D_1,D_2 \rangle=[f_1]^TQ[f_2]$ and it is easy to check that 
the right-hand side does not depend on the choice of $f_1,f_2$.)
We call the canonical bilinear form $\langle  \cdot \, , \cdot \rangle$ the {\em energy pairing} on $\Div^0(G)$.

\begin{lemma} \label{lem:posdef}
The energy pairing is positive definite.
\end{lemma}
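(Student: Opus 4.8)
The plan is to reduce the statement to the standard positivity of the Dirichlet energy form associated with the Laplacian. Recall from the discussion immediately preceding the lemma that for any $D \in \Div^0(G)$ one may write $[D] = Q[f]$ for some rational-valued $f \in \Hom(V(G),\QQ)$, and that with this choice the energy pairing satisfies $\langle D, D \rangle = [f]^T Q [f]$, independently of the chosen generalized inverse $L$ and of the particular $f$. So it suffices to analyze the quadratic form $f \mapsto [f]^T Q [f]$.

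First I would expand this quadratic form in terms of the entries of $Q$. Using $q_{ii} = \deg(v_i)$ and the fact that $-q_{ij}$ (for $i \neq j$) is the number of edges joining $v_i$ and $v_j$, a direct computation collapses the double sum $\sum_{i,j} q_{ij} f(v_i) f(v_j)$ into the edge sum
\[
[f]^T Q [f] = \sum_{\{v,w\} \in E(G)} (f(v) - f(w))^2 .
\]
This is the key identity; it manifestly exhibits $\langle D, D \rangle$ as a sum of squares, hence $\langle D, D \rangle \geq 0$ for all $D \in \Div^0(G)$, which gives positive semi-definiteness.

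It then remains to verify definiteness, i.e.\ that $\langle D, D \rangle = 0$ forces $D = 0$. If the edge sum above vanishes, then $f(v) = f(w)$ for every edge $\{v,w\}$, so $f$ is constant on each connected component of $G$. Since $G$ is assumed connected, $f$ is a constant multiple of the all-ones function, i.e.\ $[f] = c\,\mathbf{1}$ for some $c$. But $\mathbf{1}$ spans the kernel of $Q$, so $[D] = Q[f] = c\,Q\mathbf{1} = 0$ and therefore $D = 0$. Combined with the semi-definiteness from the previous step, this establishes that the energy pairing is positive definite.

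I do not expect any serious obstacle here: the only mild care needed is to confirm the edge-sum identity (purely combinatorial bookkeeping in the entries of $Q$) and to invoke connectedness at exactly the point where one passes from $f$ being locally constant to globally constant. Everything else follows from facts already recorded in the excerpt, namely that such an $f$ exists, that the pairing equals $[f]^T Q [f]$, and that the kernel of $Q$ is spanned by $\mathbf{1}$.
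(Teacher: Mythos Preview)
Your proof is correct and takes essentially the same approach as the paper: both reduce to showing $\langle D,D\rangle = [f]^T Q [f]$ is a sum of squares over edges, the paper via the incidence-matrix factorization $Q = BB^T$ (so that $[f]^T Q [f] = \|B^T[f]\|_2^2$), and you via a direct expansion of the quadratic form. Your version is in fact slightly more complete, since the paper leaves the definiteness step (that $\|B^T[f]\|_2^2 = 0$ forces $f$ constant and hence $D=0$) implicit.
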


\begin{proof}
Let $B$ be the incidence matrix of the graph.  Then $Q=BB^T$, so if $[D]=Q[f]$ we have 
$\langle D,D \rangle=[f]^TBB^T[f]= \| B^T[f]\|_2^2$.
\end{proof}

\begin{definition}
The {\em energy} of a divisor $D \in \Div^0(G)$ is 
\[\mathcal{E}(D)=\langle D , D \rangle= [D]^{T} L [D] \ . \]
\end{definition}

\noindent Since the energy pairing is positive definite, $\mathcal{E}(D) \geq 0$ with equality iff $D=0$. 

\begin{remark}
The name ``energy pairing'' comes from the fact that if $D \in \Div^0(G)$ represents an external current in the network, 
where $D(v)$ units of current enter the network at $v$ if $D(v)>0$ and
$-D(v)$ units of current exit the network at $v$ if $D(v)<0$,
then $\mathcal{E}(D)$ is precisely the total energy dissipated (per unit time) in the network. 

\end{remark}

We emphasize that the energy pairing is independent of the choice of $L$ only because the divisors are assumed to have degree zero.  One can 
extend the energy pairing to arbitrary divisors by fixing a vertex $q$ and defining the {\em $q$-energy pairing} by
\[ \langle D, E \rangle _{q}= \langle D-\deg(D)(q), E-\deg(E)(q) \rangle \] 
for $D,E \in \Div(G)$.

\subsection{The maximum principle}

\begin{lemma}
\label{GenMaxPrin}
Let $f \in \mathcal{M}(G)$. Let $A_{\max}$ (resp. $A_{\min}$) be the set of vertices where $f$ achieve its maximum (resp. minimum) value. Then
\begin{itemize}
\item[(a)] For $v \in A_{\max}$, $\Delta_v(f) \geq \outdeg_{\A_{\max}}(v)$. 
\item[(b)] For $v \in A_{\min}$, $\Delta_v(f) \leq -\outdeg_{\A_{\min}}(v)$. 
\end{itemize}
\end{lemma}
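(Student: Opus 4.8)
The plan is to prove both parts directly from the definition of the Laplacian by a local computation at a vertex where the extremum is attained, using the fact that neighbors split naturally into those inside and those outside the extremal set. Since part (b) follows from part (a) by applying the result to $-f$ (note that $\Delta_v(-f) = -\Delta_v(f)$ and that the minimum set of $f$ is the maximum set of $-f$), I would only carry out part (a) carefully and then deduce (b) by this symmetry.

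For part (a), fix $v \in A_{\max}$ and recall that
\begin{equation*}
\Delta_v(f) = \sum_{\{v,w\} \in E(G)} (f(v) - f(w)),
\end{equation*}
where the sum runs over all edges incident to $v$ (with multiplicity, since $G$ is a multigraph). I would partition these incident edges according to whether the other endpoint $w$ lies in $A_{\max}$ or in $V(G) \setminus A_{\max}$. For an edge with $w \in A_{\max}$, we have $f(w) = f(v)$, so the contribution $f(v) - f(w)$ is exactly $0$. For an edge with $w \notin A_{\max}$, we have $f(w) < f(v)$ (strictly, by definition of the maximum set), and since $f$ is integer-valued this forces $f(v) - f(w) \geq 1$. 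The number of edges of this second type is precisely $\outdeg_{A_{\max}}(v)$, the number of edges between $v$ and $V(G) \setminus A_{\max}$.

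Summing these contributions gives $\Delta_v(f) \geq 0 \cdot (\text{edges into } A_{\max}) + 1 \cdot \outdeg_{A_{\max}}(v) = \outdeg_{A_{\max}}(v)$, which is the desired inequality. The only genuinely load-bearing hypothesis is the integrality of $f$: it is what upgrades the strict inequality $f(w) < f(v)$ into the quantitative bound $f(v) - f(w) \geq 1$. I do not anticipate a serious obstacle here; the main thing to be careful about is bookkeeping with edge multiplicities in the multigraph setting (each parallel edge contributes separately to both the Laplacian sum and to $\outdeg_{A_{\max}}(v)$, so the counts match up) and correctly handling the symmetry reduction from (a) to (b).
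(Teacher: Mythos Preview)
Your proposal is correct and follows essentially the same argument as the paper: split the edges at $v$ according to whether the other endpoint lies in $A_{\max}$, use integrality to get $f(v)-f(w)\geq 1$ on the outgoing edges, and deduce (b) from (a) by replacing $f$ with $-f$. Your added remarks on integrality and multi-edge bookkeeping are accurate but not needed beyond what the paper already implicitly uses.
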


\begin{proof}
For part (a) let $v \in  \A_{\max}$. For an edge $e = vw$, if $w \in A_{\max}$ then $f(v)=f(w)$, and if $w \not\in A_{\max}$ then $f(v) - f(w) \geq 1$. Since $\Delta_v(f)= \sum_{\{v,w\}  \in E(G)} (f(v) - f(w))$, the result follows.
Part (b) follows from part (a) by replacing $f$ with $-f$. 

\end{proof}

One obtains the following well-known corollary:

\begin{corollary}[Maximum principle]
Suppose $f \in \mathcal{M}(G)$ is nonconstant. Then $f$ achieves its maximum (resp. minimum) value at a vertex $v$ for which $\Delta_v(f) >0$  (resp. $\Delta_v(f)<0$).
\end{corollary}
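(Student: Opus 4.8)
The plan is to derive the Maximum Principle as an immediate consequence of Lemma~\ref{GenMaxPrin}, which already does essentially all the work. The key observation is that since $f$ is nonconstant, its maximum and minimum values are distinct, so the level set $A_{\max}$ cannot be all of $V(G)$. By connectedness of $G$, there must then exist a vertex $v \in A_{\max}$ having at least one neighbor outside $A_{\max}$, i.e., a vertex $v$ for which $\outdeg_{A_{\max}}(v) > 0$. Applying part (a) of Lemma~\ref{GenMaxPrin} at this vertex gives $\Delta_v(f) \geq \outdeg_{A_{\max}}(v) \geq 1 > 0$, which proves the maximum assertion.

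First I would make the connectedness argument precise. If every vertex of $A_{\max}$ had all its neighbors inside $A_{\max}$, then $A_{\max}$ would be a nonempty union of connected components of $G$; since $G$ is connected and $A_{\max} \neq V(G)$, this is impossible. Hence some $v \in A_{\max}$ satisfies $\outdeg_{A_{\max}}(v) \geq 1$. This is the step where the hypothesis that $G$ is connected (part of our standing definition of a graph in \S\ref{NotationSection}) is genuinely used, and where the assumption that $f$ is nonconstant enters, via $A_{\max} \subsetneq V(G)$.

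The minimum assertion follows by the same argument applied to part (b) of Lemma~\ref{GenMaxPrin}: here $A_{\min} \neq V(G)$ and connectedness yields a vertex $v \in A_{\min}$ with $\outdeg_{A_{\min}}(v) \geq 1$, whence $\Delta_v(f) \leq -\outdeg_{A_{\min}}(v) \leq -1 < 0$. Alternatively, one can simply apply the maximum case to $-f$ and observe that $A_{\min}(f) = A_{\max}(-f)$ together with $\Delta_v(-f) = -\Delta_v(f)$, which mirrors the reduction used in the proof of Lemma~\ref{GenMaxPrin} itself.

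I do not expect any serious obstacle here; the corollary is a formal strengthening of the lemma from a nonstrict to a strict inequality, purchased entirely by the existence of a boundary vertex of the extremal level set. The only point requiring care is the explicit invocation of connectedness to guarantee such a boundary vertex exists, which is easy to overlook since it is built into the definition of a graph rather than stated as an explicit hypothesis of the corollary.
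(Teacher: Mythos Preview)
Your proof is correct and matches the paper's approach exactly: the paper simply states that the corollary follows from Lemma~\ref{GenMaxPrin} without giving any details, and what you have written is precisely the routine argument the reader is expected to supply.
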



\section{Chip-firing dynamics and potential theory}
\label{ChipPotSec}

\subsection{Chip-firing dynamics on graphs}
 \label{ChipFiringSubsection}

Following \cite{BN1}, we define an equivalence relation $\sim$ (called {\em linear equivalence}) 
on the group $\Div(G)$ as follows:
\begin{definition}
 For $D_1, D_2 \in \Div(G)$, $D_1 \sim D_2$ if and only if $D_1-D_2$ is in the image of $\Delta : \M(G) \to \Div(G)$.
\end{definition}

This equivalence relation is closely related to notion of {\em chip-firing games} or {\em dollar games}  (see, e.g., \cite{Lovasz91,Biggs97,Biggs99,BN1, Dhar90, BTW88}). 
Given a divisor $D \in \Div(G)$, one can view the integer $D(v)$ as the number of {\em dollars} assigned to the vertex $v$.
If $D(v) <0$ then $v$ is said to be {\em in debt}. A {\em chip-firing move} consists of choosing a vertex and having it 
either borrow one dollar from each of its neighbors or give (``fire") one dollar to each of its neighbors.
For $D_1, D_2 \in \Div(G)$, $D_1 \sim D_2$ if and only if starting from the configuration $D_1$ one can reach the configuration $D_2$ through a sequence of chip-firing moves.

\subsection{Chip-firing moves and the energy pairing}

For $D \in \Div(G)$ we define $\E_q(D)=\langle D, D \rangle_q$. The following two propositions relate the energy pairing and chip-firing moves, and will be used in the next section.
\begin{proposition}\label{energyproposition}
\begin{itemize}
\item[(a)] If $E=D+\Delta(f) \in \Div(G)$ for some $f \in \mathcal{M}(G)$, then \[\mathcal{E}_q(E) =\mathcal{E}_q(D)+\sum_{v\in V(G)} (D+E)(v)\cdot f(v) -2 \deg(E)\cdot f(q) \ .\]
\item[(b)] If $E=D-\Delta(\chi_A) \in \Div(G)$ for some  $A \subseteq V(G)\backslash \{q\}$, then \[\mathcal{E}_q(E) =\mathcal{E}_q(D)-\sum_{v\in A} (D+E)(v) \ .\]
If, moreover, $E$ is effective on $A$ (i.e. $E(v) \geq 0$ for $v \in A$), then 
\begin{equation}\label{energyComputations11}
 \mathcal{E}_q(E) \leq \mathcal{E}_q(D)-\lambda(A) 
\end{equation}
where $\lambda(A)$ denotes the size of the $(A,G\backslash A)$-cut (i.e., the number of edges having one end in $A$ and the other end in $V(G) \backslash A$).
\end{itemize}
\end{proposition}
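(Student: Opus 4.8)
The plan is to reduce both parts to computations with the bilinear energy pairing on $\Div^0(G)$, exploiting the fact that $\E_q(D) = \langle D^0, D^0\rangle$, where $D^0 := D - \deg(D)(q)$ is the degree-zero shift of $D$. For part (a), I would first observe that since $\Delta(f) \in \Prin(G) \subseteq \Div^0(G)$, passing from $D$ to $E = D + \Delta(f)$ leaves the degree unchanged, so $\deg(E) = \deg(D)$ and consequently $E^0 = D^0 + \Delta(f)$. Expanding $\E_q(E) = \langle D^0 + \Delta(f),\, D^0 + \Delta(f)\rangle$ by bilinearity gives
\begin{equation*}
\E_q(E) = \E_q(D) + 2\langle D^0, \Delta(f)\rangle + \langle \Delta(f), \Delta(f)\rangle.
\end{equation*}
The crux is then a single identity: for any $D' \in \Div^0(G)$ and any $f$, one has $\langle D', \Delta(f)\rangle = [D']^T [f] = \sum_v D'(v) f(v)$. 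This follows from the formula $\langle D_1, D_2\rangle = [g_1]^T Q [g_2]$ (valid whenever $Q[g_i] = [D_i]$, as established after the definition of the pairing) together with the symmetry of $Q$: taking $g_2 = f$ and any rational potential $g_1$ for $D'$ gives $[g_1]^T Q[f] = (Q[g_1])^T[f] = [D']^T[f]$. Applying this to the cross term (with $D' = D^0$, where $\sum_v D^0(v)f(v) = \sum_v D(v)f(v) - \deg(D)f(q)$) and to $\langle \Delta(f),\Delta(f)\rangle = [f]^T[\Delta(f)] = \sum_v f(v)(E(v)-D(v))$, then collecting terms and using $\deg(D) = \deg(E)$, collapses the expression to exactly $\sum_v (D+E)(v)f(v) - 2\deg(E)f(q)$.

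For part (b), I would simply specialize part (a) to $f = -\chi_A$. Since $q \notin A$ we have $\chi_A(q) = 0$, so the $f(q)$ term vanishes and the formula becomes $\E_q(E) = \E_q(D) - \sum_{v\in A}(D+E)(v)$, as claimed. For the inequality, the key observation is that on $A$ the coefficient change is governed by the out-degree: since $E = D - \Delta(\chi_A)$ and $\Delta_v(\chi_A) = \outdeg_A(v)$ for $v \in A$, we get $D(v) = E(v) + \outdeg_A(v)$, hence $(D+E)(v) = 2E(v) + \outdeg_A(v)$. Effectivity of $E$ on $A$ then yields $(D+E)(v) \geq \outdeg_A(v)$ for each $v \in A$, and summing over $A$ gives $\sum_{v\in A}(D+E)(v) \geq \sum_{v\in A}\outdeg_A(v) = \lambda(A)$, which is precisely the desired bound.

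I expect the only real obstacle to be the bookkeeping in part (a): one must work consistently with the degree-zero shifts $D^0, E^0$ so that the pairing is even defined, and verify that the identity $\langle D', \Delta(f)\rangle = \sum_v D'(v)f(v)$ is independent of the choice of potential $g_1$ (this is exactly the well-definedness already built into the energy pairing over $\QQ$). Once that identity is in hand, both the equality in (a) and everything in (b) are essentially mechanical; the local computation $\Delta_v(\chi_A) = \outdeg_A(v)$ for $v \in A$ and the cut identity $\sum_{v\in A}\outdeg_A(v) = \lambda(A)$ are routine.
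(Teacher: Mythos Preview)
Your proposal is correct and follows essentially the same approach as the paper. The only cosmetic difference is that the paper bundles the cross and quadratic terms into a single pairing $\langle D+E-2d(q),\Delta(f)\rangle$ (choosing one potential $g$ for $D+E-2d(q)$), whereas you split them as $2\langle D^0,\Delta(f)\rangle+\langle\Delta(f),\Delta(f)\rangle$; both routes rest on the same identity $\langle D',\Delta(f)\rangle=[D']^T[f]$ and the same computation for part (b).
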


\begin{proof} 
(a) Let $\deg(E)=d$. Then $\deg(D)=d$ as well. We can find a $\QQ$-valued function $g$ so that $[E+D-2d(q)]=Q[g]$.
Then
\[
\begin{aligned}
\ \mathcal{E}_q(E) &=\langle D+\Delta(f)-d(q), D+\Delta(f) -d(q)\rangle \\
&= \mathcal{E}_q(D)+\langle D+E-2d(q), \Delta(f)\rangle
=\mathcal{E}_q(D)+(Q[g])^T L Q[f] \\
&= \mathcal{E}_q(D)+[g]^TQ L Q[f]
=\mathcal{E}_q(D)+[g]^TQ [f] \\
&= \mathcal{E}_q(D)+(Q[g])^T[f]
=\mathcal{E}_q(D)+[D+E-2d(q)]^T[f] \\
& = \mathcal{E}_q(D)+\sum_{v\in V(G)} (D+E)(v)\cdot f(v) -2d \cdot f(q) \ .
\end{aligned}
\]

(b) For the first statement let $f=-\chi_A$ in part (a). $E(v) \geq 0$ for $v\in A$ means that $D(v) \geq \Delta_v(\chi_A)=\outdeg_A(v)$ for $v \in A$. So $\sum_{v\in A} (D+E)(v) \geq \sum_{v\in A}\outdeg_A(v)=\lambda(A)$.
\end{proof}

\begin{remark}

Proposition~\ref{energyproposition} (a) can be used to give a new solution to the well-known {\em Pentagon Problem}%
\footnote{Problem 3, 27th IMO 1986. We refer the reader to \cite{WinklerBook, WegertReiher, Smirnov, AlonKrasikovPeres, Mozes} for some discussions, solutions, and 
generalizations of this problem.}:
 ``To each vertex of a regular pentagon an integer is assigned in such a way that the sum of all five numbers is positive. If three consecutive vertices are assigned the numbers $x,y,z$ respectively, and $y<0$, then the following operation is allowed : the numbers $x,y,z$ are replaced by $x+y, −y , z+y$, respectively. Such an operation is performed repeatedly as long as at least one of the five numbers is negative. Determine whether this procedure necessarily comes to an end after a finite number of steps''.

To see that this process stops for any $n$-cycle, let $D$ be the starting configuration and assume that $s=\deg(D) \geq 1$.  It follows from 
Proposition~\ref{energyproposition} (a) that the quantity
\[
\mathbb{E} (D) = \sum_{q \in V(G)} {\mathcal{E}_q(D)}=\sum_{q, p, v \in V(G)}{D(p)j_q(p,v)D(v)}
\]
goes down by exactly $-2 s\cdot D(v) >0$ after each basic move with $y=D(v)<0$. Since the energy pairing is positive definite, we always have $\mathbb{E} \geq 0$, 
and thus the procedure will necessarily come to an end after a finite number of steps. Note that this method gives a way to compute the number of steps as well. 

\end{remark}

\medskip 

\begin{definition}
Let $\mathbf{1}$ denote the all-1's divisor. For $D \in \Div(G)$ and $q\in V(G)$, we define $b_q(D)=\langle \mathbf{1}, D \rangle _{q}$.
\end{definition}

\begin{remark}
$b_q(D)$ is the ``total potential'' induced by the external current source corresponding to the divisor $D-\deg(D)(q) \in \Div^0(G)$. 

\end{remark}

\begin{proposition}\label{energyproposition2}
\begin{itemize}
 \item [(a)] If $E=D+\Delta(f) \in \Div(G)$ for some $f \in \mathcal{M}(G)$, then 

\begin{equation}\label{energylemma2eq}
b_q(E)=  b_q(D) + \sum_{v \in V(G)}{(f(v)}-f(q))\ .
\end{equation}

\item[(b)] If $E=D-\Delta(\chi_A) \in \Div(G)$ for some  $A \subseteq V(G)\backslash \{q\}$, then 
\begin{equation}
b_q(E) =b_q(D)-|A| \ ,
\end{equation}
where $|A|$ is the cardinality of the set $A$. Thus $b_q(\cdot)$ is a {\em monovariant}%
\footnote{A quantity which either only goes up or only goes down under some process.}.
\end{itemize}

\end{proposition}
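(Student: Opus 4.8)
The plan is to prove both parts directly from the definition $b_q(D) = \langle \mathbf{1}, D \rangle_q$ together with Proposition~\ref{energyproposition}(a), using the fact that $b_q$ is \emph{linear} in its divisor argument $D$ (since the energy pairing is bilinear). This linearity is the key structural observation: because $b_q(D) = \langle \mathbf{1} - n(q), D - \deg(D)(q)\rangle$ and the pairing is $\QQ$-bilinear, the increment $b_q(E) - b_q(D)$ depends only on the difference $E - D = \Delta(f)$, and I can compute it by pairing $\mathbf{1}$ against a principal divisor.

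For part (a), the plan is to write $b_q(E) - b_q(D) = \langle \mathbf{1}, E - D \rangle_q = \langle \mathbf{1} - n(q), \Delta(f)\rangle$, where $\Delta(f)$ already has degree zero so no correction term is needed on the second slot. I would then introduce a $\QQ$-valued function $g$ with $[\mathbf{1} - n(q)] = Q[g]$ (possible since $\mathbf{1} - n(q)$ has degree zero) and run the same generalized-inverse manipulation as in the proof of Proposition~\ref{energyproposition}(a): namely $\langle \mathbf{1} - n(q), \Delta(f)\rangle = [g]^T Q L Q [f] = [g]^T Q [f] = [\mathbf{1} - n(q)]^T [f]$. Expanding the final dot product gives exactly $\sum_{v} f(v) - n\,f(q) = \sum_{v \in V(G)} (f(v) - f(q))$, which is \eqref{energylemma2eq}. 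Alternatively, since $\langle \mathbf{1}, E\rangle_q = b_q(E)$ and the computation in Proposition~\ref{energyproposition}(a) was carried out for a general pairing, I could simply specialize that argument by replacing one copy of the divisor with $\mathbf{1}$; this avoids reintroducing $g$ from scratch.

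For part (b), the plan is to specialize part (a) to $f = -\chi_A$, so that $E = D - \Delta(\chi_A)$ as required, with $A \subseteq V(G)\setminus\{q\}$. Then $\sum_{v \in V(G)} (f(v) - f(q)) = \sum_{v \in V(G)} (-\chi_A(v) + \chi_A(q))$, and since $q \notin A$ we have $\chi_A(q) = 0$, leaving $-\sum_{v} \chi_A(v) = -|A|$. This yields $b_q(E) = b_q(D) - |A|$ immediately. The monovariant assertion is then just the observation that each such set-firing move strictly decreases $b_q$ by the positive integer $|A|$.

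I do not expect any serious obstacle here; the content is essentially a direct consequence of bilinearity and the identity $QLQ = Q$ satisfied by any generalized inverse. The only points requiring a little care are bookkeeping ones: first, ensuring the degree-zero correction terms are handled correctly so that the $q$-subscripted pairing reduces to an honest pairing of degree-zero divisors (here it helps that $\Delta(f)$ and $\mathbf{1} - n(q)$ are both automatically degree zero), and second, verifying that the hypothesis $q \notin A$ is exactly what kills the $f(q)$ contribution in part (b). Neither of these is deep, so the proof should be short.
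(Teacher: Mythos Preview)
Your proposal is correct and follows essentially the same route as the paper: for (a) you use bilinearity to reduce to computing $\langle \mathbf{1}-n(q),\Delta(f)\rangle$, introduce $g$ with $Q[g]=[\mathbf{1}-n(q)]$, and apply $QLQ=Q$ to obtain $[\mathbf{1}-n(q)]^T[f]=\sum_v(f(v)-f(q))$; for (b) you specialize to $f=-\chi_A$ and use $q\notin A$. This matches the paper's argument line for line.
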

\begin{proof}
(a) We can find a $\QQ$-valued function $g$ so that $[\mathbf{1}-n(q)]=Q[g]$, where $n=|V(G)|$.
Then
\[
\begin{aligned}
\ \langle \mathbf{1}, E \rangle _{q} & =  \langle \mathbf{1}, D \rangle _{q} + \langle \mathbf{1},  \Delta(f) \rangle _{q} \\
&=\langle \mathbf{1}, D \rangle _{q} +(Q[g])^T L Q[f]
=\langle \mathbf{1}, D \rangle _{q} +[g]^TQ L Q[f]\\
&=\langle \mathbf{1}, D \rangle _{q} +[g]^TQ [f]
=\langle \mathbf{1}, D \rangle _{q} +(Q[g])^T[f]\\
&=\langle \mathbf{1}, D \rangle _{q} +[\mathbf{1}-n(q)]^T[f] \\
& =\langle \mathbf{1}, D \rangle _{q} + \sum_{v \in V(G)}{(f(v)}-f(q))\ .
\end{aligned}
\]

(b) follows from part (a) by setting $f=-\chi_A$. 
\end{proof}

The $\QQ$-valued function $g: V(G) \to \QQ$ in the proof of Proposition~\ref{energyproposition2} (a) can be computed 
explicitly, and this gives a useful formula for $b_q$:

\begin{lemma}
\label{g_lemma}
Let $g_q: V(G) \to \QQ$ be the unique function such that $\Delta(g_q) = \sum_v (v) - n(q)$ and $g_q(q)=0$. 
Then:
\begin{enumerate}
\item[(a)]  $g_q(v)=\sum_{p\in V(G)}{j_q(p,v)}$ . 
\item[(b)] For any divisor $D \in \Div(G)$,  
\begin{equation}
\label{eq:b_identity}
b_q(D)=\sum_{v}{g_q(v)D(v)}=\sum_{v}\sum_{p}{j_q(p,v)D(v)} \ .
\end{equation}
In particular, if $D(v) \geq 0$ for $v \ne q$, then $b_q(D) \geq 0$. 
\end{enumerate}
\end{lemma}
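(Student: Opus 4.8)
The plan is to treat the two parts separately, deriving (a) from the uniqueness characterization of $g_q$ and then feeding (a) into the pairing computation already set up in the proof of Proposition~\ref{energyproposition2}.

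For part (a), I would first write down the natural candidate $h(v) = \sum_{p \in V(G)} j_q(p,v)$ and verify that it satisfies the two properties that characterize $g_q$. Since $j_q(p,\cdot)$ solves $\Delta f = (p)-(q)$ and $\Delta$ is linear, summing over the $n$ vertices $p$ gives $\Delta(h) = \sum_p ((p)-(q)) = \sum_v (v) - n(q)$, which is exactly the Laplace equation defining $g_q$. The normalization $h(q)=0$ is immediate from the listed property $j_q(p,q)=0$. It then remains to justify uniqueness: since $G$ is connected, $\ker \Delta$ consists only of the constant functions, so $g_q - h$ is constant, and evaluating at $q$ (where both vanish) forces $g_q = h$. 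This is the step to state carefully, but it is not hard.

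For part (b), the key is the alternative form of the energy pairing: whenever $[D_1]=Q[f_1]$ and $D_2 \in \Div^0(G)$, one has $\langle D_1, D_2 \rangle = [f_1]^T [D_2]$ (this is precisely the manipulation $[f_1]^T Q L [D_2] = [f_1]^T [D_2]$ used in the proofs above, valid because $QL[D_2]=[D_2]$ on $\Div^0$). Unwinding the definition, $b_q(D) = \langle \mathbf{1}, D \rangle_q = \langle \mathbf{1} - n(q),\, D - \deg(D)(q) \rangle$, and by the very normalization defining $g_q$ we have $[\mathbf{1} - n(q)] = Q[g_q]$. Applying the alternative form with $f_1 = g_q$ gives $b_q(D) = [g_q]^T [D - \deg(D)(q)] = \sum_v g_q(v) D(v) - \deg(D)\, g_q(q)$, and the last term vanishes because $g_q(q)=0$. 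Substituting part (a) then yields the double-sum expression. The positivity claim follows by inspection: the property $0 \le j_q(p,v)$ gives $g_q(v) = \sum_p j_q(p,v) \ge 0$ for every $v$, with $g_q(q)=0$; hence if $D(v)\ge 0$ for all $v\ne q$, then $b_q(D) = \sum_{v \ne q} g_q(v) D(v)$ is a sum of nonnegative terms.

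The only genuine subtlety I would flag is bookkeeping rather than real difficulty: in the proof of Proposition~\ref{energyproposition2} the auxiliary function $g$ was determined only up to an additive element of $\ker \Delta$, i.e.\ up to a constant, whereas here $g_q$ is pinned down by $g_q(q)=0$. I would make explicit that this normalization is harmless in (b) precisely because $D-\deg(D)(q)$ has degree zero, so adding a constant to $g_q$ does not change $[g_q]^T[D-\deg(D)(q)]$; the choice $g_q(q)=0$ is what makes the $\deg(D)\,g_q(q)$ term disappear cleanly and gives the stated closed form. I expect getting this normalization argument stated correctly, rather than any hard computation, to be the main thing to watch.
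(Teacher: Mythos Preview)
Your proposal is correct and follows essentially the same route as the paper: the paper leaves (a) as an exercise (your linearity-plus-uniqueness argument is exactly the intended fill-in), and for (b) it carries out precisely the computation you describe, writing $[\mathbf{1}-n(q)]=Q[g_q]$ and simplifying $(Q[g_q])^T L [D-d(q)]$ to $[g_q]^T[D-d(q)]$, then invoking $g_q(q)=0$ and the nonnegativity of $j_q$. Your remark about the normalization being harmless because $D-\deg(D)(q)$ has degree zero is a nice clarification that the paper leaves implicit.
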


\begin{proof}
Part (a) is easy and is left as an exercise.  
Part (b) follows from Construction~\ref{GeneralizeInvConst1} and the definition of the energy pairing. 
Alternatively, let $[\mathbf{1}-n(q)]=Q[g_q]$ and $[D-d(q)]=Q[f]$ where $\deg(D)=d$.  Then
\[
\begin{aligned}
\ \langle \mathbf{1}, D \rangle _{q} &= (Q[g_q])^T L Q[f]\\
&=[g_q]^TQ[f] \\
&=[g_q]^T [D-d(q)]
=\sum_{v}{g_q(v)D(v)} \\
&=\sum_{v}\sum_{p}{j_q(p,v)D(v)}\ . 
\end{aligned}
\]
The second statement follows because $j_q(p,v) \geq 0$ and  $j_q(p,q)=0$.
\end{proof}

\subsection{Reduced divisors}
 \label{ReducedDivisorsSubsection}

A nice set of representatives for equivalence classes of divisors are given by the ``reduced divisors''. 
\begin{definition}
Fix a vertex $q \in V(G)$. A divisor $D \in \Div(G)$ is called {\em $q$-reduced}%
\footnote{Reduced divisors are essentially the same thing as {\em $G$-parking functions} \cite{PostnikovShapiro04} or 
{\em superstable configurations} \cite{Rotor}.}
if it satisfies the following two conditions:
\begin{itemize}
\item[(i)] $D(v) \geq 0$ for all $v \in V(G)\backslash \{q\}$.
\item[(ii)] For every non-empty subset $A \subseteq V(G)\backslash \{q\}$, there exists a vertex $v \in A$ such that $D(v) < \outdeg_A(v)$.
\end{itemize}
\end{definition}

In other words, every vertex outside $q$ is nonnegative but simultaneously firing all the vertices in any non-empty subset 
$A$ of $V(G)$ which is disjoint from $q$ will result in some vertex becoming negative.

\medskip

The significance of reduced divisors comes primarily from the fact that for every $D \in \Div(G)$, there is a {\em unique} $q$-reduced divisor $D'$ such that $D' \sim D$.
This basic fact was discovered independently (in different guises) by several different authors (see, e.g., \cite{Gabrielov93,CoriRossinSalvy02,PostnikovShapiro04,BN1}). 
We give a new proof of this result in Corollary~\ref{energyCor} below.

\medskip

We wish to study reduced divisors from a potential-theoretic point of view.  Fix a distinguished vertex $q$ and define
\[ |D|_q = \{ E \in \Div(G) \, | \, E \sim D, \, E(v) \geq 0 \text{ for all } v\ne q\} \ . \]

\begin{lemma} \label{lem:Dq}
For every $D \in \Div(G)$ and any vertex $q$, the set $|D|_q$ is non-empty. 
\end{lemma}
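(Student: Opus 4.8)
The plan is to exhibit an explicit divisor $E \sim D$ that is effective away from $q$ by adding a single, well-chosen principal divisor to $D$. Since $E \sim D$ means $E = D + \Delta(f)$ for some $f \in \M(G)$, it suffices to produce an $f$ for which $(D + \Delta(f))(v) \geq 0$ at every vertex $v \neq q$; the value at $q$ is left unconstrained, so the idea is to push all of the compensating deficit onto $q$ and leave every other vertex with a large positive surplus.

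The natural candidate is a principal divisor that is constant and positive off $q$. First I would invoke Lemma~\ref{g_lemma}, which supplies the function $g_q : V(G) \to \QQ$ with $\Delta(g_q) = \mathbf{1} - n(q)$; equivalently $\Delta_v(g_q) = 1$ for every $v \neq q$ and $\Delta_q(g_q) = 1 - n$. Thus adding $\Delta(g_q)$ raises the coefficient at each vertex other than $q$ by exactly $1$ while absorbing the deficit at $q$, and using a large multiple raises every off-$q$ coefficient as much as we like, which is precisely what is needed to overcome any initial negativity of $D$.

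The one point requiring care — and the main obstacle — is integrality: $g_q$ is only $\QQ$-valued, whereas a chip-firing move must come from an integer-valued function $f \in \M(G)$. I would handle this by choosing a positive integer $\delta$ clearing all denominators of $g_q$, so that $\delta g_q \in \M(G)$, and then setting $f = k \delta\, g_q$ for a positive integer $k$ with $k\delta \geq -\min_{v \neq q} D(v)$. Then $f \in \M(G)$, and for every $v \neq q$ we obtain $(D + \Delta(f))(v) = D(v) + k\delta \geq 0$, so $E := D + \Delta(f) \in |D|_q$. Alternatively, one can avoid Lemma~\ref{g_lemma} entirely: the divisor $\mathbf{1} - n(q)$ lies in $\Div^0(G)$, and since $\Jac(G) = \Div^0(G)/\Prin(G)$ is finite, some positive integer multiple $M(\mathbf{1} - n(q))$ is principal, say $\Delta(f_0)$; taking a large enough multiple of $f_0$ again makes every off-$q$ coefficient nonnegative. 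Either route shows $|D|_q \neq \emptyset$, and the only substantive step is this denominator-clearing (or finite-index) argument that converts the rational potential into an honest chip-firing move.
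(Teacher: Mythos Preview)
Your proof is correct but takes a genuinely different route from the paper's.

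The paper gives a purely combinatorial argument: choose an ordering of $V(G)$ in which $q$ comes first and every other vertex has an earlier neighbor (for instance via breadth-first search from $q$), and then, working backward from the last vertex, repeatedly borrow from an earlier neighbor by replacing $D$ with $D - k\Delta(\chi_w)$ for large $k$.  Each step makes the current vertex nonnegative without disturbing the vertices already processed, and all the deficit is pushed down the ordering toward $q$.

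Your approach instead exploits the potential-theoretic structure: you use the function $g_q$ from Lemma~\ref{g_lemma} (or, equivalently, the finiteness of $\Jac(G)$) to obtain a single principal divisor that is a positive constant on $V(G)\setminus\{q\}$, and then scale.  This is slicker in that one chip-firing move suffices rather than an inductive sequence, and it meshes well with the paper's energy-pairing viewpoint.  The paper's argument, on the other hand, is more elementary---it needs nothing beyond the definition of $\Delta$ and the connectedness of $G$---and it mirrors the constructive procedure used later in Step~2 of Algorithm~\ref{MainAlgorithm} and in the proof of Proposition~\ref{ExistProp} for metric graphs.  Both arguments are short; the main trade-off is self-containedness versus thematic coherence with the surrounding potential theory.
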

\begin{proof}
Pick an ordering $\prec$ on $V(G)$ with the property that $q$ is the first vertex in the ordering and every $v \ne q$ has a neighbor $w$ with $w \prec v$. Starting from the last vertex in the ordering, we can inductively make all vertices other than $q$ effective by replacing $D$ with $D-k\Delta(\chi_w)$ for some neighbor $w \prec v$ and some 
sufficiently large integer $k$. 
\end{proof}

\begin{lemma}[Principle of least action] \label{leastActionPrinc}
Let $D$ be a $q$-reduced divisor. Assume $E \sim D$ and write $D=E+\Delta(f)$.
\begin{itemize}
\item[(a)] If $E \in |D|_q$, then $f(v) \leq f(q)$ for all $v \in V(G)$.
\item[(b)] If $E+\Delta(g) \in |D|_q$, then $f(v)-f(q) \leq g(v)-g(q)$ for all $v \in V(G)$.
\end{itemize}

\end{lemma}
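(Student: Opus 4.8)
The plan is to prove part (a) by a maximum-principle argument applied to the function $f$, using that $D$ is $q$-reduced and $E = D - \Delta(f)$ is effective away from $q$. Suppose for contradiction that $f$ attains its maximum at some vertex strictly larger than $f(q)$, and let $A_{\max}$ be the set of vertices where $f$ achieves its maximum value $M$. The key observation is that $q \notin A_{\max}$, since $f(q) < M$; hence $A_{\max}$ is a nonempty subset of $V(G) \backslash \{q\}$. For any $v \in A_{\max}$, Lemma~\ref{GenMaxPrin}(a) gives $\Delta_v(f) \geq \outdeg_{A_{\max}}(v)$. Now I would relate this to the effectivity of $E$: since $E(v) = D(v) - \Delta_v(f) \geq 0$ for $v \neq q$, we get $D(v) \geq \Delta_v(f) \geq \outdeg_{A_{\max}}(v)$ for every $v \in A_{\max}$. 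But this directly contradicts condition (ii) in the definition of a $q$-reduced divisor applied to the set $A = A_{\max}$, which demands some $v \in A$ with $D(v) < \outdeg_A(v)$. This contradiction forces $M = f(q)$, i.e. $f(v) \leq f(q)$ for all $v$.

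For part (b), the natural strategy is to reduce to part (a) by a clever relabeling of the ``reference'' divisor. Both $E$ and $E + \Delta(g)$ lie in $|D|_q$, and we have $D = E + \Delta(f)$, so we can rewrite $D = (E + \Delta(g)) + \Delta(f - g)$. The point is that $E + \Delta(g)$ is itself an element of $|D|_q$, i.e. it is linearly equivalent to the $q$-reduced divisor $D$ and effective off $q$. Applying part (a) with the representative $E' = E + \Delta(g)$ in place of $E$ and with the function $f' = f - g$ in place of $f$, we conclude $(f-g)(v) \leq (f-g)(q)$ for all $v$, which rearranges exactly to $f(v) - f(q) \leq g(v) - g(q)$.

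The main thing to verify carefully is that the hypotheses of part (a) are genuinely met in the reduction for part (b): we need $D = E' + \Delta(f')$ with $E' \in |D|_q$, and both of these hold by construction, so the reduction is clean. The only real subtlety in part (a) is confirming that $q \notin A_{\max}$, which is where the assumption $f(v) > f(q)$ for some $v$ (the negation of the desired conclusion) is used; without strict inequality somewhere the contradiction would not arise, but if $f$ is constant then trivially $f(v) = f(q) \leq f(q)$ and there is nothing to prove. I expect no serious obstacle here — the argument is a direct marriage of the maximum principle (Lemma~\ref{GenMaxPrin}) with the combinatorial defining condition of reducedness, and the passage from (a) to (b) is purely formal.
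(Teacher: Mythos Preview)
Your proof is correct and follows essentially the same approach as the paper: part (a) uses Lemma~\ref{GenMaxPrin} on the set $A_{\max}$ to contradict the $q$-reducedness condition (the paper phrases the contradiction as $E(u)<0$ for the vertex $u$ guaranteed by reducedness, which is logically equivalent to your formulation), and part (b) is reduced to part (a) via $E'=E+\Delta(g)$ and $f'=f-g$ exactly as you do.
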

\begin{proof}
Part (a) is a consequence of Lemma~\ref{GenMaxPrin}. If $f$ does not achieve its global maximum at $q$, then $A_{\max} \subseteq V(G)\backslash \{ q \}$ and for all $v\in A_{\max}$ we have $\Delta_v(f) \geq \outdeg_{\A_{\max}}(v)$. 
Since $D$ is $q$-reduced, there must be a vertex $u \in A_{\max}$ such that $D(u) < \outdeg_{\A_{\max}}(u)$.  But then 
$E(u)=D(u)- \Delta_u(f)<0$, contradicting the assumption that $E \in |D|_q$.

For (b), let $E'=E+\Delta(g)$. Then $D=E'+\Delta(f-g)$ and part (a) gives $f(v)-g(v) \leq f(q)-g(q)$.
\end{proof}

\begin{theorem} \label{energyThm}
Fix $q \in V(G)$ and let $D \in \Div(G)$.  
Then $D$ is $q$-reduced if and only if $D \in |D|_q$ and $\mathcal{E}_q(D) < \mathcal{E}_q(D')$ for all
$D' \neq D$ in $|D|_q$.
\end{theorem}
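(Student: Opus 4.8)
The plan is to establish the two implications separately: I expect the reverse direction to follow quickly from the cut bound of Proposition~\ref{energyproposition}(b), while the only real subtlety will lie in upgrading a weak inequality to a strict one in the forward direction.

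For the reverse direction, suppose $D \in |D|_q$ is the strict minimizer of $\mathcal{E}_q$ on $|D|_q$. Condition (i) in the definition of $q$-reduced is then immediate. To get condition (ii) I would argue by contradiction: given a nonempty $A \subseteq V(G) \setminus \{q\}$ with $D(v) \geq \outdeg_A(v)$ for every $v \in A$, I would form $E = D - \Delta(\chi_A)$ and check that $E$ is effective off $q$ — for $v \in A$ this is exactly the failure of (ii), since $\Delta_v(\chi_A) = \outdeg_A(v)$, and for $v \notin A \cup \{q\}$ one has $\Delta_v(\chi_A) \leq 0$. Thus $E \in |D|_q$ and $E$ is effective on $A$, so Proposition~\ref{energyproposition}(b) yields $\mathcal{E}_q(E) \leq \mathcal{E}_q(D) - \lambda(A)$. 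Connectivity of $G$ together with $\emptyset \neq A \neq V(G)$ forces $\lambda(A) > 0$, so $\mathcal{E}_q(E) < \mathcal{E}_q(D)$ with $E \neq D$, contradicting minimality.

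For the forward direction, assume $D$ is $q$-reduced, so that $D \in |D|_q$, and fix any $D' \neq D$ in $|D|_q$. Writing $D = D' + \Delta(f)$, the Principle of least action (Lemma~\ref{leastActionPrinc}(a)) gives $f(v) \leq f(q)$ for all $v$, and I would normalize $f(q) = 0$ (which does not change $\Delta(f)$) so that $f \leq 0$. Feeding this into Proposition~\ref{energyproposition}(a), with $D'$ in the role of $D$ and $D$ in the role of $E$, and using $\deg D = \deg D'$ together with $f(q)=0$, collapses the formula to
\[ \mathcal{E}_q(D) = \mathcal{E}_q(D') + \sum_{v \neq q} (D + D')(v)\, f(v). \]
Since $(D+D')(v) \geq 0$ for $v \neq q$ (both divisors lie in $|D|_q$) and $f \leq 0$, every summand is $\leq 0$, giving the non-strict inequality $\mathcal{E}_q(D) \leq \mathcal{E}_q(D')$ at once.

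The hard part will be strictness. I would suppose equality holds, so that each summand vanishes; then $D(v) = D'(v) = 0$ at every vertex where $f(v) < 0$. Because $D \neq D'$ the function $f$ is nonconstant, so its argmin set $A_{\min}$ is a nonempty subset of $\{f < 0\} \subseteq V(G) \setminus \{q\}$ on which both $D$ and $D'$ vanish, whence $\Delta_v(f) = D(v) - D'(v) = 0$ there. But the maximum principle (Lemma~\ref{GenMaxPrin}(b)) forces $\Delta_v(f) \leq -\outdeg_{A_{\min}}(v)$ for $v \in A_{\min}$, so $\outdeg_{A_{\min}}(v) = 0$ for every such $v$. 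This says no edge joins $A_{\min}$ to its complement, contradicting the connectivity of $G$ (note $\emptyset \neq A_{\min} \neq V(G)$, the latter since $q \notin A_{\min}$). Hence $\mathcal{E}_q(D) < \mathcal{E}_q(D')$, completing the forward direction. The crux is therefore this final connectivity contradiction; $q$-reducedness of $D$ enters the forward direction only through the least-action step, after which the maximum principle does the rest.
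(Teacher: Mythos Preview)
Your argument is correct. The reverse direction and the derivation of the weak inequality $\mathcal{E}_q(D)\le\mathcal{E}_q(D')$ match the paper's proof exactly (least action plus Proposition~\ref{energyproposition}(a),(b)).

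Where you diverge is in upgrading the weak inequality to a strict one. The paper does this indirectly: having shown that any weak minimizer of $\mathcal{E}_q$ on $|D|_q$ must be $q$-reduced, it observes that two weak minimizers $D_1,D_2$ would both be $q$-reduced, and then applies the principle of least action symmetrically (to $D_2=D_1+\Delta(f)$ and to $D_1=D_2+\Delta(-f)$) to force $f\equiv 0$ and hence $D_1=D_2$. Thus uniqueness of the minimizer, and the strict inequality, come as a corollary of the reverse implication together with uniqueness of reduced divisors within a class. Your route is more self-contained: you analyze the equality case of the energy formula directly, read off $D(v)=D'(v)=0$ on $\{f<0\}$, and then use Lemma~\ref{GenMaxPrin}(b) plus connectivity to obtain a contradiction. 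This avoids invoking the reverse direction in the proof of the forward one and does not rely on knowing (or simultaneously proving) uniqueness of $q$-reduced representatives; on the other hand, the paper's arrangement has the virtue that uniqueness of reduced divisors drops out of the same paragraph.
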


\begin{proof}

If $D$ is $q$-reduced, then $D \in |D|_q$. Let $E \in |D|_q$. Then $\mathcal{E}_q(D) \leq \mathcal{E}_q(E)$; write $D=E+\Delta(f)$ with $f(q)=0$. By Lemma~\ref{leastActionPrinc} (a), we have $f(v) \leq 0$. By Proposition~\ref{energyproposition} (a), we have
\[ \mathcal{E}_q(D) = \mathcal{E}_q(E)+\sum_{v \ne q} (D+E)(v)\cdot f(v)  \leq \mathcal{E}_q(E) \ .\]

Now assume $D \in |D|_q$ and $\mathcal{E}_q(D) \leq \mathcal{E}_q(E)$  for all $E \in |D|_q$ but $D$ is not $q$-reduced. Then there exists a non-empty set  $A \subseteq V(G)\backslash \{q\}$ such that $D_1=D-\Delta(\chi_A) \in |D|_q$, so Proposition~\ref{energyproposition} (b) implies

\[
 \mathcal{E}_q(D_1) = \mathcal{E}_q(D)-\lambda(A) \leq  \mathcal{E}_q(D)-1 \ .
\]

\medskip

It follows that if $\mathcal{E}_q(D_1)=\mathcal{E}_q(D_2) \leq \mathcal{E}_q(E)$ for all $E \in |D|_q$,  then both $D_1$ and $D_2$ are $q$-reduced. By Lemma~\ref{leastActionPrinc} (a), if $D_2=D_1+\Delta(f)$ with $f(q)=0$, then $f(v)\leq 0$ for all $v \ne q$. Similarly  $-f(v)\leq 0$ for all $v \ne q$, so $f=0$ and $D_1=D_2$.
\end{proof}

\begin{corollary}\label{energyCor}
Fix $q \in V(G)$ and let $D \in \Div(G)$.  
Then there is a unique $q$-reduced divisor $D' \in \Div(G)$ linearly equivalent to $D$.  
\end{corollary}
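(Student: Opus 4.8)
The plan is to deduce Corollary~\ref{energyCor} directly from Theorem~\ref{energyThm}. That theorem already identifies the $q$-reduced divisors in a linear equivalence class with the strict global minimizers of the functional $\mathcal{E}_q$ on the set $|D|_q$, so once I know such a minimizer exists, both existence and uniqueness of the reduced representative will follow. Uniqueness is in fact immediate: if $D_1$ and $D_2$ were both $q$-reduced and equivalent to $D$, then $|D_1|_q = |D_2|_q = |D|_q$ (this set depends only on the linear equivalence class), and by Theorem~\ref{energyThm} each of $D_1, D_2$ would be a strict minimizer of $\mathcal{E}_q$ over this common set; since a strict minimizer is unique by definition, $D_1 = D_2$. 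So the only real work is existence.

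For existence, I first invoke Lemma~\ref{lem:Dq} to know that $|D|_q$ is non-empty, and fix some $E_0 \in |D|_q$. The key point is that $\mathcal{E}_q$ attains its minimum on the (generally infinite) set $|D|_q$. To see this, note that every $E \in |D|_q$ has $\deg(E) = \deg(D) =: d$, so the shift $E \mapsto E - d\,(q)$ maps $|D|_q$ injectively into the lattice $\Div^0(G)$, and under this shift $\mathcal{E}_q(E) = \langle E - d(q),\, E - d(q)\rangle$ becomes the restriction of the energy pairing. By Lemma~\ref{lem:posdef} the energy pairing is a positive definite quadratic form on $\Div^0(G) \otimes \RR \cong \RR^{n-1}$, and such a form is proper: each sublevel set $\{x : \langle x, x\rangle \le c\}$ is compact. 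Since $\Div^0(G)$ is a discrete lattice, the sublevel set $\{E \in |D|_q : \mathcal{E}_q(E) \le \mathcal{E}_q(E_0)\}$ is therefore finite, and being non-empty (it contains $E_0$) it has an element $D'$ of minimal energy. This $D'$ minimizes $\mathcal{E}_q$ over all of $|D|_q$.

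Finally I check that the minimizer $D'$ is $q$-reduced. Condition (i) holds since $D' \in |D|_q$. If condition (ii) failed, there would be a non-empty $A \subseteq V(G) \setminus \{q\}$ with $D'(v) \ge \outdeg_A(v)$ for every $v \in A$; then $D' - \Delta(\chi_A)$ still lies in $|D|_q$ (it stays effective off $q$), and Proposition~\ref{energyproposition}(b) gives $\mathcal{E}_q(D' - \Delta(\chi_A)) \le \mathcal{E}_q(D') - \lambda(A) < \mathcal{E}_q(D')$, contradicting minimality. Hence $D'$ is $q$-reduced, completing the existence half. I expect the coercivity/properness argument of the second paragraph --- ensuring that the infimum of $\mathcal{E}_q$ over the infinite set $|D|_q$ is actually attained --- to be the main obstacle; everything else is a direct appeal to Theorem~\ref{energyThm} and Proposition~\ref{energyproposition}(b).
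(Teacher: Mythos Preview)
Your argument is correct and follows essentially the same route as the paper: invoke Lemma~\ref{lem:Dq} for non-emptiness, use positive definiteness (Lemma~\ref{lem:posdef}) to guarantee that $\mathcal{E}_q$ attains its minimum on $|D|_q$, and then apply Theorem~\ref{energyThm}. Your coercivity/properness paragraph makes explicit what the paper leaves terse, and your final paragraph simply re-derives the ``minimizer $\Rightarrow$ reduced'' direction already contained in Theorem~\ref{energyThm}, so you could shorten the write-up by citing that theorem directly once the minimizer is in hand.
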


\begin{proof}
It follows from Lemma~\ref{lem:Dq}, Proposition~\ref{energyproposition} (b), and Lemma~\ref{lem:posdef} that we may choose $D' \in \Div(G)$ such that 
$\mathcal{E}_q(D') \leq \mathcal{E}_q(D'')$  for all $D'' \in |D|_q$.
By Theorem~\ref{energyThm}, $D'$ is the unique $q$-reduced divisor linearly equivalent to $D$.
\end{proof}

An analogue of Theorem~\ref{energyThm} holds with $\mathcal{E}_q$ replaced by $b_q$:

\begin{theorem} \label{energyThm2}
Fix $q \in V(G)$ and let $D \in \Div(G)$.  
Then $D$ is $q$-reduced if and only if $D \in |D|_q$ and $b_q(D) < b_q(D')$ for all
$D' \neq D$ in $|D|_q$.
\end{theorem}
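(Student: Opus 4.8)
The plan is to mimic the proof of Theorem~\ref{energyThm} almost verbatim, replacing the quadratic functional $\mathcal{E}_q$ with the linear functional $b_q$ and using Proposition~\ref{energyproposition2} in place of Proposition~\ref{energyproposition}. The overall logic is identical: prove the forward implication by showing $b_q$ is minimized at a $q$-reduced divisor, and prove the reverse implication by showing that failure to be $q$-reduced allows a strict decrease of $b_q$, hence contradicts minimality, together with a uniqueness argument for the minimizer.

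First I would treat the forward direction. Suppose $D$ is $q$-reduced and let $E \in |D|_q$ with $E \ne D$; write $D = E + \Delta(f)$ with $f(q)=0$. By the principle of least action (Lemma~\ref{leastActionPrinc}(a)), $f(v) \le 0 = f(q)$ for all $v$. Proposition~\ref{energyproposition2}(a) then gives
\[
b_q(E) = b_q(D) + \sum_{v \in V(G)} (g(v) - g(q)),
\]
applied in the correct direction — that is, writing $E = D + \Delta(-f)$ so that $b_q(E) = b_q(D) - \sum_{v}(f(v)-f(q)) \ge b_q(D)$, with equality iff $f \equiv 0$, i.e. iff $E = D$. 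Since the minimization functional here is linear rather than quadratic, this step is actually cleaner than in Theorem~\ref{energyThm}: there is no cross-term to sign-check, only the sum of nonpositive quantities $f(v)-f(q)$.

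Next I would handle the reverse direction. Assume $D \in |D|_q$ and $b_q(D) \le b_q(E)$ for all $E \in |D|_q$, but suppose $D$ is not $q$-reduced. Then there is a nonempty $A \subseteq V(G)\setminus\{q\}$ with $D_1 = D - \Delta(\chi_A) \in |D|_q$, and Proposition~\ref{energyproposition2}(b) yields $b_q(D_1) = b_q(D) - |A| \le b_q(D) - 1 < b_q(D)$, contradicting minimality of $D$. This establishes that the minimizer is $q$-reduced. Uniqueness of the minimizer follows exactly as in Theorem~\ref{energyThm}: if $D_1, D_2$ both minimize $b_q$ over $|D|_q$, both are $q$-reduced, and applying Lemma~\ref{leastActionPrinc}(a) to $D_1 = D_2 + \Delta(f)$ and to the reverse gives $f \equiv 0$, so $D_1 = D_2$. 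Combined with the forward direction, this shows the minimum is achieved at a unique point, which is precisely the $q$-reduced representative.

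The main obstacle, such as it is, will be bookkeeping the sign conventions in Proposition~\ref{energyproposition2}(a): one must apply it with the roles of $D$ and $E$ (and correspondingly the function $f$ versus $-f$) arranged so that the nonpositivity $f(v) - f(q) \le 0$ supplied by the least-action principle translates into $b_q(E) \ge b_q(D)$ rather than the reverse inequality. There is no genuine difficulty beyond this, since the linearity of $b_q$ removes the need for the quadratic cross-term estimate that appears in the proof of Theorem~\ref{energyThm}; indeed the whole argument is structurally simpler, and I would present it as a near-parallel of that proof, citing the shared lemmas directly.
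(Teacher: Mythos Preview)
Your proposal is correct and follows essentially the same route as the paper's own proof: both invoke Lemma~\ref{leastActionPrinc}(a) together with Proposition~\ref{energyproposition2}(a) for the forward direction and Proposition~\ref{energyproposition2}(b) for the reverse, followed by the same uniqueness argument via the least-action lemma applied in both directions. Your observation that strict inequality in the forward direction follows immediately because $\sum_v f(v)=0$ with all $f(v)\le 0$ forces $f\equiv 0$ is a minor streamlining—the paper only records $b_q(D)\le b_q(E)$ there and extracts strictness from the uniqueness step—but the two arguments are otherwise identical.
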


\begin{proof}
Note that by Lemma~\ref{g_lemma} (b), the function $b_q$ does have a minimum in $|D|_q$. The rest of the proof mirrors the proof of Theorem~\ref{energyThm}.

If $D$ is $q$-reduced, then $D \in |D|_q$ by definition. Let $E \in |D|_q$ be another divisor. Write $D=E+\Delta(f)$ with $f(q)=0$. By Lemma~\ref{leastActionPrinc} (a) we have $f(v) \leq 0$. Now, by Proposition~\ref{energyproposition2} (a), we have
\[   b_q( D) = b_q(E)+\sum_{v \ne q} f(v)  \leq  b_q(E) \ .\]

Now assume $D \in |D|_q$ and $b_q(D) \leq b_q(E)$  for all $E \in |D|_q$ but $D$ is not $q$-reduced. Then there exists a non-empty set  $A \subseteq V(G)\backslash \{q\}$ such that $D_1=D-\Delta(\chi_A) \in |D|_q$. But then Proposition~\ref{energyproposition2} (b) gives the contradiction

\[
b_q( D_1) = b_q(D) -|A| \leq   b_q(D)-1 \ .
\]

\medskip

It follows that if $b_q( D_1)=b_q( D_2) \leq b_q(E)$ for all $E \in |D|_q$, then both $D_1$ and $D_2$ are $q$-reduced. By Lemma~\ref{leastActionPrinc} (a), if $D_2=D_1+\Delta(f)$ with $f(q)=0$, then $f(v)\leq 0$ for all $v \ne q$. Similarly  $-f(v)\leq 0$ for all $v \ne q$, so $f=0$ and $D_1=D_2$.
\end{proof}

\begin{remark}
Theorem~\ref{energyThm2} remains true if $b_q(D)=\langle \mathbf{1}, D \rangle_q$ is replaced by $\langle \mathbf{h}, D \rangle_q$ for any ``$\RR$-divisor'' $\mathbf{h}$ (i.e. $h(v) \in \RR$), provided that  $\mathbf{h}(v) > 0$ for $v \ne q$.

\end{remark}


\section{Algorithmic aspects of reduced divisors}
\label{ComputingReducedSection}

\subsection{Dhar's algorithm}
 \label{DharBurningAlgorithm}

Let $D$ be a divisor on the graph $G$.  In order to check whether or not $D$ is $q$-reduced using the definition,
one needs to check for {\em all subsets} $A \subseteq V(G)\backslash \{q\}$
whether or not there is a vertex $v \in A$ such that $D(v) < \outdeg_A(v)$.   
But there is in fact a much more efficient procedure called {\em Dhar's burning algorithm} (after Dhar \cite{Dhar90}).

\medskip

The idea behind Dhar's algorithm is as follows.
Think of the edges of $G$ as being made of a flammable material.
A fire starts at vertex $q$ and proceeds along each edge adjacent to $q$.
At each vertex $v \neq q$, there are $D(v)$ firefighters, each of whom can control fires in a single direction (i.e., edge)
leading into $v$.
Whenever there are fires approaching $v$ in more than $D(v)$ directions, the fire burns through $v$ and 
proceeds to burn along all the other edges incident to $v$.
The divisor $D$ is $q$-reduced iff the fire eventually burns through every vertex of $G$.

\medskip

More formally, Dhar's algorithm is stated in Algorithm~\ref{DharAlg}.

\begin{algorithm}
\caption{Dhar's Burning Algorithm}
\KwIn{A divisor $D\in \Div(G)$, and a vertex $q \in V(G)$.}
\KwOut{TRUE if $D$ is $q$-reduced, and FALSE if $D$ is not $q$-reduced.}
\BlankLine
\lIf{$D(v) < 0$ {\rm for some} $v \in V(G)\backslash \{q\}$}{output FALSE and Stop.}

Let $A_0=V(G)$ and $v_0=q$.

\For{$1 \leq i \leq n-1$}{Let $A_i=A_{i-1}\backslash\{v_{i-1}\}$. 

\lIf{{\rm for all } $v \in A_i$, $D(v) \geq \outdeg_{A_i}(v)$}{output FALSE and Stop.}

\lElse{let $v_i \in A_i$ be any vertex with $D(v_i) < \outdeg_{A_i}(v_i)$.}
}
Output TRUE.
\label{DharAlg}
\end{algorithm}

\medskip

The complexity of Dhar's algorithm is $O(n^2)$: there are at most $n$ iterations, and at most $n$ inequalities are tested in each iteration.

\subsection{The Cori-Le Borgne algorithm}
\label{CoriLeBorgneSection}

Using a modification of Dhar's burning algorithm, it is possible to obtain an ``activity preserving'' bijection between $q$-reduced divisors (of a given degree $d$) on $G$ and spanning trees of $G$.  This is more or less just a restatement of the work of 
Cori and Le Borgne in \cite{CoriLeBorgne03} in the language of reduced divisors; however, by using the Cori-Le Borgne 
algorithm in conjunction with the results of \S\ref{PotentialTheorySection} and \S\ref{ChipPotSec}, we are able to obtain new results.

\medskip

The idea behind the Cori-Le Borgne algorithm is as follows.
In our original formulation of Dhar's algorithm, we burned through multiple edges at once.
We now use an ordering of $E(G)$ to break ties and implement a ``controlled burn'' in which only one
edge at a time is burnt.

\medskip

Thus, fix a total order on $E(G)$ and suppose we are given a $q$-reduced divisor $D$.
We run Dhar's burning algorithm on $D$, starting with a fire at $q$.
However, any time there are multiple unburnt edges which are eligible to burn, we always choose the {\em smallest}
one.  Whenever the fire burns through a vertex $v$, we {\em mark} the edge along which the fire traveled just before burning through $v$.  Since $D$ is $q$-reduced, the fire eventually burns through every vertex of $G$.
The set of marked edges is connected, has cardinality $n-1$, and covers all vertices and thus forms a {\em spanning tree} $T_D$ of $G$.
We thus obtain an association $\{ {\rm reduced \; divisors} \} \leadsto \{ {\rm spanning \; trees} \}$
(Algorithm~\ref{Reduced2Trees}).

\medskip

The remarkable fact discovered by Cori and Le Borgne is that this association is a {\em bijection}.
The inverse map is also completely explicit and can be described as follows (using the same total order on $E(G)$).
Suppose we are given a spanning tree $T$ in $G$.
A controlled burn starts at the vertex $q$ and, as before,
any time there are multiple unburnt edges eligible to burn we choose the smallest one.
The difference is that now the firefighters at $v$ can control incoming fires in every direction except for those
corresponding to edges of $T$.
Thus the fire burns through a vertex $v \neq q$ exactly when it travels along an edge $e \in T$
from some (burnt) vertex $w$ to $v$. At the moment when $v$ is burned through, we set $D(v)$ equal to 
$|\{ {\rm burnt \; edges \; adjacent \; to \; } v \}| -1$. 
Eventually the fire will burn through every vertex and a nonnegative integer $D(v)$ will have been 
assigned to each vertex $v \neq q$. The value of $D(q)$ is determined by requiring that $\deg(D)=d$.
It turns out that the resulting divisor $D$ is $q$-reduced, so we obtain an association
$\{ {\rm spanning \; trees} \} \leadsto \{ {\rm reduced \; divisors} \}$
(Algorithm~\ref{Trees2Reduced}) which one checks is {\em inverse} to  Algorithm~\ref{Reduced2Trees}.

\medskip

\begin{algorithm}
\caption{Reduced divisor to spanning tree.}
\KwIn{\\$G=(V,E)$ is graph with a fixed ordering on $E$, \\$q \in V(G)$, \\$D=\sum_{v}{a_v(v)}$, a $q$-reduced divisor of degree $d$.}
\KwOut{\\$T_D$ a spanning tree of $G$.}

\BlankLine
{\bf Initialization:}
\\$X=\{q\}$ (``burnt'' vertices), 
\\$R=\emptyset$ (``burnt'' edges), 
\\$T=\emptyset$ (``marked'' edges).

\BlankLine

\While{$X \ne V(G)$}{
$f=\min\{ e=\{s,t\} \in E(G) \, | \, e \not\in R,  \, s \in X, t \not\in X \}$,\\
let $v \in V(G)\backslash X$ be the vertex incident to $f$, \\
\If{$a_{v} = |\{e \text{ {\em incident to} } v \,|\, e \in R\}|$}{
$X \leftarrow X \cup \{v\}$,
\\ $T \leftarrow T\cup \{f\}$,
}
$R \leftarrow R\cup \{f\}$
}
ExtAct$=E \backslash R$,\\
ExtPass$=R \backslash T$,\\
Output $T_D=T$.
\label{Reduced2Trees}
\end{algorithm}

\begin{algorithm}
\caption{Spanning tree to reduced divisor}
\KwIn{\\$G=(V,E)$ is graph with a fixed ordering on $E$, \\$q \in V(G)$, \\$T$ a spanning tree of $G$.}
\KwOut{\\$D_T=\sum_{v}{a_v(v)}$, a $q$-reduced divisor of degree $d$.}

\BlankLine

{\bf Initialization:}
\\$X=\{q\}$ (``burnt'' vertices), 
\\$R=\emptyset$ (``burnt'' edges).

\BlankLine

\While{$X \ne V(G)$}{
$f=\min\{ e=\{s,t\} \in E(G) \, | \, e \not\in R,  \, s \in X, t \not\in X \}$,\\
\If{$f \in T$}{
let $v \in V(G)\backslash X$ be the vertex incident to $f$, \\
$a_{v} := |\{e \text{ incident to } v \,|\, e \in R\}|$, \\
$X \leftarrow X \cup \{v\}$\\
}
$R \leftarrow R\cup \{f\}$
}
$a_q:=d-\sum_{v \ne q}{a_v}$,\\
ExtAct$=E \backslash R$,\\
ExtPass$=R \backslash T$,\\
Output $D_T=\sum_{v}{a_v(v)}$.
\label{Trees2Reduced}
\end{algorithm}

\begin{theorem} \label{thm:CL}
The association given by Algorithms~\ref{Reduced2Trees} and \ref{Trees2Reduced} is a bijection. More precisely:
\begin{itemize}
\item[(i)] For any $q$-reduced divisor $D$ of degree $d$, Algorithm~\ref{Reduced2Trees} outputs a spanning tree $T_D$ of $G$. 
\item[(ii)] For any spanning tree $T$, Algorithm~\ref{Trees2Reduced} outputs a $q$-reduced divisor $D_T$ of degree $d$
on $G$. 
\item[(iii)] Algorithms~\ref{Reduced2Trees} and \ref{Trees2Reduced} are inverse to one another: $T_{D_T}=T$ and $D_{T_D} = D$.
\end{itemize}
 Moreover, under the bijection furnished by Algorithms~\ref{Reduced2Trees} and \ref{Trees2Reduced}:
\begin{itemize}
\item[(iv)] The set $R$ is the same at the end of both algorithms.

\item[(v)] The externally active edges%
\footnote{An edge $e \in E \backslash T$ is called {\em externally active} for $T$ if it is the largest element in the unique cycle contained in $T \cup \{e\}$, and is called {\em externally passive} for $T$ if it is not externally active. 
The {\em external activity} of $T$ is the number of externally active edges for $T$ and is denoted by $ex(T)$.} 
 for $T$ are precisely the elements of ExtAct$=E \backslash R$, and the externally passive edges for $T$ are precisely the elements of ExtPass$=R \backslash T$.
\item[(vi)] The degree of $\sum_{v \neq q} a_v (v)$ is equal to $g-ex(T)$, where $g = m-n+1$. Equivalently, $a_q=d-g+ex(T)$.

\end{itemize}
\end{theorem}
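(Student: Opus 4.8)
The plan is to reduce everything to a single structural feature of the ``controlled burn'': in both Algorithm~\ref{Reduced2Trees} and Algorithm~\ref{Trees2Reduced} the edge selected in each pass, $f=\min\{e=\{s,t\}: e\notin R,\ s\in X,\ t\notin X\}$, depends only on the current pair $(X,R)$. I would exploit this to run the two algorithms ``in lockstep'' on matching inputs, after first disposing of termination and the purely one-sided parts (i) and (ii), since the lockstep arguments will quote them.

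For (i): if Algorithm~\ref{Reduced2Trees} on a $q$-reduced $D$ ever stalls with $X\neq V(G)$, set $A=V(G)\setminus X$. Then all boundary edges lie in $R$, so for each $v\in A$ every edge from $v$ into $X=V(G)\setminus A$ is burnt while no edge inside $A$ is; since $v$ never burned through, the running count $0,1,\dots,\outdeg_A(v)-1$ never met $D(v)$, forcing $D(v)\geq\outdeg_A(v)$ for all $v\in A$. As $A\subseteq V(G)\setminus\{q\}$ is nonempty, this contradicts condition (ii) of $q$-reducedness. Hence $X$ reaches $V(G)$, there are exactly $n-1$ burn-throughs, and the marked edges form a spanning tree. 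For the termination half of (ii): since $T$ is connected, the cut $(X,V(G)\setminus X)$ always contains a tree edge, so Algorithm~\ref{Trees2Reduced} never stalls; each of the $n-1$ tree edges burns through its child exactly once, and $D_T$ is a well-defined divisor of degree $d$.

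Next, (iii) and (iv) follow from two lockstep inductions. Running Algorithm~\ref{Reduced2Trees} on $D$ alongside Algorithm~\ref{Trees2Reduced} on $T_D$: if both have reached the same $(X,R)$ they pick the same $f$ and the same outside endpoint $v$; Algorithm~\ref{Reduced2Trees} marks $f$ (so $f\in T_D$) exactly when $D(v)$ equals the number of already-burnt edges at $v$, which is precisely the test ``$f\in T_D$'' that makes Algorithm~\ref{Trees2Reduced} burn through — so the two stay synchronized, $R$ agrees throughout, and the recorded values give $D_{T_D}=D$. The reverse induction (Algorithm~\ref{Trees2Reduced} on $T$ beside Algorithm~\ref{Reduced2Trees} on $D_T$) hides the one genuine subtlety: when $f\notin T$ I must rule out a spurious burn-through in Algorithm~\ref{Reduced2Trees}. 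Here I would use that $a_v=D_T(v)$ is recorded at the \emph{strictly later} step when $v$'s tree edge burns, by which point the count of burnt edges at $v$ has grown by at least one (the current $f$ is itself added), so $D_T(v)$ strictly exceeds the present count and the burn-through test fails. This yields $T_{D_T}=T$ and the common value of $R$, giving (iii) and (iv); it is exactly to license ``the strictly later step'' that I needed the independent termination of Algorithm~\ref{Trees2Reduced}. Statement (ii) then follows directly: for nonempty $A\subseteq V(G)\setminus\{q\}$, let $v$ be the first vertex of $A$ to burn; at that moment $X\cap A=\emptyset$, so every burnt edge at $v$ runs into $V(G)\setminus A$ and the not-yet-burnt tree edge entering $v$ is one further such edge, whence $a_v\leq\outdeg_A(v)-1<\outdeg_A(v)$.

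The crux is (v), and with it (vi). By (iv) I may compute $R$ from Algorithm~\ref{Trees2Reduced} alone; all $n-1$ tree edges burn, so $E\setminus R$ and $R\setminus T$ consist of non-tree edges, and it remains to match them with external activity. The key observation is that whenever the selection rule advances across a frontier cut $F=(X,V(G)\setminus X)$, it does so along the minimum boundary edge of $F$, and any fundamental cycle $C_e$ meets the cut $F$ in an even number of edges. I would apply this twice. If $e=\max(C_e)$ and $e$ were burnt, then at the step $e$ is selected it crosses $F$ as the minimum boundary edge, so a second edge of $C_e$ — necessarily a tree edge of the fundamental path, hence smaller than $e$ — also crosses $F$ and beats $e$ to the minimum, a contradiction; so $e$ is unburnt. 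Conversely, if $e\neq\max(C_e)$, consider the step at which $g_0:=\max(C_e)$ (a tree edge, larger than $e$) is crossed as the minimum boundary edge: parity forces a second crossing edge of $C_e$, and since every other edge of $C_e$ is smaller than $g_0$ while $g_0$ is the minimum tree edge in the cut, no tree edge of $C_e$ can cross, so the second crossing edge must be $e$ itself — whence $e<g_0$ is an available boundary edge and $e$ burns before $g_0$. Thus $e$ is unburnt iff $e$ is externally active, giving $E\setminus R=\{\text{externally active edges}\}$ and $R\setminus T=\{\text{externally passive edges}\}$, which is (v). Finally (vi) is a counting identity: each burnt non-tree edge is counted exactly once in $\sum_{v\neq q}a_v$, namely at its later-burning endpoint (which is never $q$), while tree edges are counted zero times, so $\sum_{v\neq q}a_v=|R\setminus T|=g-ex(T)$ by (v), and $a_q=d-g+ex(T)$ follows from $\deg(D_T)=d$. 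I expect the cycle--cut parity step of (v) to be the main obstacle, as it is the only place where the tie-breaking order genuinely interacts with the cycle structure of $G$; the remaining parts are bookkeeping once termination and the lockstep synchronization are in place.
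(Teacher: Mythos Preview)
The paper does not actually supply a proof of Theorem~\ref{thm:CL}; it states the result as a reformulation in the language of reduced divisors of the Cori--Le~Borgne theorem and refers the reader to \cite{CoriLeBorgne03}. So there is no ``paper's own proof'' to compare against, and your proposal stands as a self-contained argument.

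Your argument is essentially correct and follows the natural line of attack: termination via Dhar-style reasoning for (i), connectivity of $T$ for (ii), a lockstep induction on the shared state $(X,R)$ for (iii)--(iv), and a cycle--cut parity argument for (v). The lockstep argument and the counting for (vi) are clean. There is, however, one imprecision worth tightening in (v): you repeatedly write that the selected edge $f$ is ``the minimum boundary edge of $F$'', but in fact it is only the minimum \emph{unburnt} boundary edge --- smaller non-tree boundary edges may already sit in $R$. Your applications still go through because in both directions the competing edge you produce is a \emph{tree} edge of $C_e$ crossing the cut, and a tree edge crossing the current cut can never lie in $R$ (once a tree edge enters $R$, its outside endpoint is absorbed into $X$ and it ceases to be a boundary edge). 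You should make this observation explicit; without it, the sentence ``beats $e$ to the minimum'' and the claim that ``$g_0$ is the minimum tree edge in the cut'' are unjustified as written. Similarly, in the converse direction your clause ``whence $e<g_0$ is an available boundary edge and $e$ burns before $g_0$'' reads as an assertion that $e$ is available, when the logic is really: $e$ crosses $F$, so if $e\notin R$ it would be selected over $g_0$, a contradiction; hence $e\in R$. With these two clarifications the argument for (v) is complete.
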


\begin{remark}
\begin{itemize}
\item[(1)] The complexity of both Algorithms~\ref{Reduced2Trees} and \ref{Trees2Reduced} is the same as that of Dhar's algorithm (Algorithm~\ref{DharAlg}), namely $O(n^2)$.
\item[(2)] A natural choice for the fixed degree is $d=g$, in which case it follows from Theorem~\ref{thm:CL} (vi) that $a_q=ex(T)$.
\end{itemize}
\end{remark}

\begin{remark} \label{CriticalRemark} 
The problem of giving an explicit bijection between reduced divisors (in the guise of $G$-parking functions) and spanning trees has been studied in several previous works (see, e.g., \cite{CoriLeBorgne03,ChebikinPylyavskyy05,BensonTetali08}). 
There are also a number of bijections in the literature between {\em $q$-critical configurations} and spanning trees
(see \cite{Gabrielov93,WinklerBiggs97,BiggsTutte99,CoriRossinSalvy02,PostnikovShapiro04, MajumdarDhar}). For a fixed vertex $q$,  $q$-critical configurations provide another set of representatives for equivalence classes of divisors (see, e.g., \cite{Biggs97, Biggs99}). There is a simple relationship between reduced and critical divisors: $D$ is $q$-reduced if and only if $K^{+}-D$ is $q$-critical, where $K^{+}=\sum_{v \in V(G)}{(\deg(v)-1)(v)}$ \cite{BN1}. 
\end{remark}

\subsection{Computing the reduced divisor}
 \label{MainAlgorithmSubsection}
Recall that computing the $q$-reduced divisor equivalent to some divisor $D$ can be viewed as 
the solution to a linear (Theorem~\ref{energyThm2}) or quadratic (Theorem~\ref{energyThm}) integer programming problem.  
Dhar's algorithm (which runs in time $O(n^2)$) 
shows that one can efficiently {\em check} whether a given divisor is the solution to the corresponding integer programming problem.  Next we show that in fact one can {\em find} the solution efficiently as well.

\medskip

Fix a base vertex $q \in V(G)$.  Given a divisor $D \in \Div(G)$, Algorithm~\ref{MainAlgorithm} below 
efficiently%
\footnote{``Efficient'' in this context means that the running time will be polynomial in $m$ and $n$ with only 
$\log(\deg(D))$-bit computations involved.}  
computes the $q$-reduced divisor $D' \sim D$. The idea behind the algorithm is as follows.
Starting with a divisor $D$, the first step is to replace $D$ with an equivalent divisor whose coefficients are ``small''.  This is accomplished by the simple trick of replacing 
$[D]$ with $[D]-Q \lfloor L_{(q)}[D] \rfloor$, where $L_q$ is as in Construction~\ref{GeneralizeInvConst1} and
$\lfloor \cdot \rfloor$ denotes the coordinate-wise floor function. 
The second step is to make the divisor effective outside $q$. This is done by having negative vertices borrow from their neighbors in a controlled way.
The third step is to iterate Dhar's algorithm until we reach a $q$-reduced divisor.  More specifically,
if $D$ is not yet reduced then by running Dhar's algorithm on $D$ we obtain a subset $A$ of $V(G)\backslash \{q\}$
such that firing all vertices in $A$ once yields a divisor $D-\Delta(\chi_A)$ which is still effective outside $q$.
Replacing $D$ by $D-\Delta(\chi_A)$ and iterating this procedure, one obtains (after finitely many iterations)
a $q$-reduced divisor.  Moreover, the number of iterations can be explicitly bounded in terms of
the $j$-function (\S\ref{jfunctions}) using formula \eqref{eq:b_identity}.

\medskip

A formal statement of the resulting algorithm appears below (Algorithm~\ref{MainAlgorithm}).

\begin{algorithm}
\caption{Finding the Reduced Divisor}\label{MainAlgorithm}

\KwIn{\\$Q$ is the Laplacian matrix of the graph $G$, \\$D\in \Div(G)$, \\$q \in V(G)$.}
\KwOut{\\$D' \sim D$ the unique $q$-reduced divisor equivalent to $D$. }
\BlankLine
\textit{(Step 1)} 

Find the generalized inverse $L_{(q)}$ of $Q$, as in Construction~\ref{GeneralizeInvConst1}. Compute the divisor $[D']=[D]-Q \lfloor L_{(q)}[D] \rfloor$.
\BlankLine
 \textit{(Step 2)} 

\lWhile{{\em there exists $v \neq q$ with }$D'(v)<0$}{$[D'] \leftarrow [D']+Q[\chi_{\{v\}}]$.}
\BlankLine
\textit{(Step 3)} 

Let $A_0=V(G)$, $v_0=q$, and $i=1$.

\While{$i \leq n-1$}{
Let $A_i=A_{i-1}\backslash\{v_{i-1}\}$.

\lIf{{\em there exists $v_i \in A_i$ such that} $D'(v_i) < \outdeg_{A_i}(v_i)$}{$i \leftarrow i+1$.}

\lElse{$[D'] \leftarrow [D']-Q[\chi_{A_i}]$. Reset $i=1$.}
}

\end{algorithm}

\medskip
\textbf{Correctness of Algorithm~\ref{MainAlgorithm}:}
\medskip

Assume for the moment that the algorithm actually terminates and produces an output. It is easy to see that the output is 
linearly equivalent to $D$. Also, the output passes Dhar's algorithm and therefore is $q$-reduced (in fact, as discussed above, one can view Step 3 as an iterated Dhar's algorithm). Therefore, for the correctness of the algorithm, we only need to show that it terminates (which follows {\it a posteriori} from the efficiency analysis below).

\medskip

\textbf{Efficiency of Algorithm~\ref{MainAlgorithm}:}
\begin{proposition}\label{DegreeProp}
If $[D']=[D]-Q \lfloor L_{(q)}[D] \rfloor$, then $|D'(v)|<\deg(v)$ for all $v \neq q$. 
\end{proposition}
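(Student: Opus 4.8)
The plan is to compute $[D']$ in closed form using the key identity $QL_{(q)} = I + R_{(q)}$ from Construction~\ref{GeneralizeInvConst1}, and then to recognize each coordinate $D'(v)$ with $v \neq q$ as the value of the Laplacian applied to a \emph{fractional-part} vector, which is automatically small.

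First I would set $x = L_{(q)}[D] \in \QQ^n$ and split it into integer and fractional parts, $x = \lfloor x \rfloor + \phi$, where $\phi$ denotes the coordinatewise fractional part, so that $0 \le \phi(w) < 1$ for every vertex $w$. A preliminary observation I would record is that, since the $q$-th row of $L_{(q)}$ is identically zero, we have $x_q = 0$, hence $\lfloor x \rfloor_q = 0$ and $\phi(q) = 0$. Next I would substitute $\lfloor x \rfloor = x - \phi$ into the definition of $[D']$ and use $Qx = QL_{(q)}[D] = (I + R_{(q)})[D] = [D] + R_{(q)}[D]$ to obtain
\[
[D'] = [D] - Q\lfloor x \rfloor = [D] - Qx + Q\phi = Q\phi - R_{(q)}[D] \ .
\]
Because $R_{(q)}$ has nonzero entries only in its $q$-th row, the vector $R_{(q)}[D]$ is supported on the single coordinate $q$. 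Consequently, for every $v \neq q$ the correction term vanishes and $D'(v) = (Q\phi)_v = \Delta_v(\phi) = \sum_{\{v,w\} \in E(G)} (\phi(v) - \phi(w))$.

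Finally I would bound this sum termwise. Each summand $\phi(v) - \phi(w)$ lies strictly in the open interval $(-1,1)$, since both $\phi(v)$ and $\phi(w)$ belong to $[0,1)$; and the number of summands is exactly $\deg(v)$, counting edge multiplicities. As $G$ is connected with at least two vertices, $\deg(v) \ge 1$, so summing $\deg(v)$ terms each strictly between $-1$ and $1$ yields $-\deg(v) < \Delta_v(\phi) < \deg(v)$, that is, $|D'(v)| < \deg(v)$, as claimed.

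I do not expect a serious obstacle: the computation is essentially forced once the identity $QL_{(q)} = I + R_{(q)}$ is invoked. The only point demanding care is the bookkeeping at the exceptional vertex $q$ — namely verifying that $\phi(q) = 0$ and, more importantly, that the correction vector $R_{(q)}[D]$ affects only the $q$-coordinate, so that it can be discarded for all $v \neq q$ without disturbing the bound. The strictness of the inequality (rather than $\le$) also relies on $\phi$ taking values in the \emph{half-open} interval $[0,1)$, which I would make sure to emphasize.
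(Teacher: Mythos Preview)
Your proof is correct and follows essentially the same route as the paper: both invoke the identity $QL_{(q)} = I + R_{(q)}$ to rewrite $[D']$ as $Q\mathbf{f}$ plus a correction supported only at $q$, where $\mathbf{f}$ is the fractional-part vector with entries in $[0,1)$, and then bound $(Q\mathbf{f})_v$ by $\deg(v)$. Your version is slightly more explicit in carrying out the final termwise bound (which the paper leaves as ``easy to show''), and your remark that $\phi(q)=0$ is true but not actually needed for the $v\neq q$ estimate.
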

\begin{proof}
Recall from Construction~\ref{GeneralizeInvConst1} that $QL_{(q)}=I+R_{(q)}$, where $I$ is the identity matrix and $R_{(q)}$ has $-1$ entries in $q^{\rm th}$ row and is zero elsewhere. Therefore $[D]=QL_{(q)}[D]+\deg(D) \cdot \mathbf{e}_q$, where $\mathbf{e}_q$ is the column vector which is $1$ in position $q$ and zero elsewhere.  Now
\[
\begin{aligned}
\ [D'] &=[D]-Q \lfloor L_{(q)}[D]\rfloor\\
&=Q(L_{(q)}[D]-\lfloor L_{(q)}[D]\rfloor)+\deg(D) \cdot \mathbf{e}_q\\
&=Q\mathbf{f}+\deg(D) \cdot \mathbf{e}_q,\\
\end{aligned}
\]
where $\mathbf{f}=L_{(q)}[D]-\lfloor L_{(q)}[D]\rfloor$ is a vector with entries in $[0,1)$. It is now easy to show that the absolute values of the entries of $Q\mathbf{f}$ are bounded by the degree of the corresponding vertices.
\end{proof}

\begin{remark}
Computing the generalized inverse $L_{(q)}$ takes time at most $O(n^{\omega})$, where $\omega$ is the exponent for matrix multiplication (currently $\omega=2.376$ \cite{CoppersmithWinograd}). Notice that this computation is done only once. The second computation in Step (1) can be done using $O(n^2)$ operations (multiplication and addition). For bit complexity, one can check that the denominators appearing in the generalized inverse $L_{(q)}$ are annihilated by the exponent of the Jacobian group. The exponent is bounded above by the number of spanning trees of the graph. If we allow at most $c$ parallel edges then there are at most $c^{n-1}\cdot n^{n-2}$ spanning trees. Moreover, one can also show that the absolute value of the entries of $L_{(q)}$ are bounded above by $R_{\max}$, the maximum effective resistance between any two vertices of the graph. Therefore all integers in the algorithm can be represented in $O(n \cdot \log{cn})$ bits.
\end{remark}

Now we will use our potential theoretic techniques to bound the number of chip-firing moves in Algorithm~\ref{MainAlgorithm}. As we will see, several different bounds in the literature can be obtained as special cases or corollaries of our general potential theory bound.  

\begin{proposition}\label{time}
\begin{itemize}
\item [(a)] Let $D_1$ be the output of Step 1 of Algorithm~\ref{MainAlgorithm}. Then Step 2 of Algorithm~\ref{MainAlgorithm} terminates in at most $b_q(K^{+}-D_1)$ borrowing moves, where $K^{+}=\sum_{v \in V(G)}{(\deg(v)-1)(v)}$. 
\item[(b)] Let $D_2$ be the output of Step 2 of Algorithm~\ref{MainAlgorithm}. Then Step 3 of Algorithm~\ref{MainAlgorithm} 
terminates in at most $b_q(D_2)$ firing moves.
\item[(c)] Algorithm~\ref{MainAlgorithm} terminates in fewer than 
\begin{equation}
\label{eq:MainAlgBound}
3\sum_{v}\sum_{p}{j_q(p,v)\deg(v)}
\end{equation}
chip-firing moves.
\end{itemize}
\end{proposition}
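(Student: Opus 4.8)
The plan is to prove all three parts using the monovariant $b_q$ from Proposition~\ref{energyproposition2}, counting the moves in Step 2 and Step 3 separately, and then converting the resulting bounds into the $j$-function expression via the identity $b_q(D)=\sum_v\sum_p j_q(p,v)D(v)$ from Lemma~\ref{g_lemma}(b). Throughout I will exploit that $b_q$ is a \emph{linear} functional with nonnegative coefficients $g_q(v)=\sum_p j_q(p,v)\ge 0$ (and $g_q(q)=0$), so that comparisons of coefficients outside $q$ translate directly into comparisons of $b_q$-values.

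For part (a), I would first record that each borrowing move $[D']\leftarrow[D']+Q[\chi_{\{v\}}]$ is $D'\mapsto D'+\Delta(\chi_{\{v\}})$ with $v\ne q$, so Proposition~\ref{energyproposition2}(a) shows it raises $b_q$ by exactly $\sum_w(\chi_{\{v\}}(w)-\chi_{\{v\}}(q))=1$. Hence the total number of Step~2 moves equals $b_q(D_2)-b_q(D_1)$. The crux is an invariant: $D'(v)\le\deg(v)-1$ for every $v\ne q$ holds throughout Step~2. Proposition~\ref{DegreeProp} gives it for the input $D_1$, and a borrowing move at a vertex $v$ with $D'(v)<0$ replaces $D'(v)$ by $D'(v)+\deg(v)<\deg(v)$ while only decreasing the coordinates at neighbors, so the invariant persists. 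Consequently the output $D_2$ satisfies $(K^{+}-D_2)(v)\ge 0$ for all $v\ne q$, and Lemma~\ref{g_lemma}(b) together with linearity yields $b_q(D_2)\le b_q(K^{+})$. Therefore the number of borrowing moves is $b_q(D_2)-b_q(D_1)\le b_q(K^{+})-b_q(D_1)=b_q(K^{+}-D_1)$.

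For part (b), each executed firing is $D'\mapsto D'-\Delta(\chi_{A_i})$ with $A_i\subseteq V(G)\setminus\{q\}$, so Proposition~\ref{energyproposition2}(b) shows it decreases $b_q$ by $|A_i|\ge 1$. The firing is performed only when $D'(v)\ge\outdeg_{A_i}(v)$ for all $v\in A_i$, and one checks this keeps the divisor effective off $q$: coordinates in $A_i$ become nonnegative, and coordinates outside $A_i$ (all $\ne q$) can only increase. Since $D_2$ is effective off $q$, Lemma~\ref{g_lemma}(b) then guarantees $b_q(D')\ge 0$ at every stage. Starting from $b_q(D_2)\ge 0$ and dropping by at least $1$ at each firing without ever becoming negative, the number of firing moves is at most $b_q(D_2)$.

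For part (c), I combine the two bounds: the total number of chip-firing moves is at most $b_q(K^{+}-D_1)+b_q(D_2)$. Writing $B=\sum_v\sum_p j_q(p,v)\deg(v)=b_q\!\left(\sum_v\deg(v)(v)\right)$, part (a) gives $b_q(D_2)\le b_q(K^{+})$, while Proposition~\ref{DegreeProp} gives $-D_1(v)\le\deg(v)-1=K^{+}(v)$ for $v\ne q$, whence by nonnegativity of $g_q$ we get $b_q(K^{+}-D_1)=b_q(K^{+})+b_q(-D_1)\le 2\,b_q(K^{+})$. Thus the total is at most $3\,b_q(K^{+})$. Finally $b_q(K^{+})=B-b_q(\mathbf{1})<B$, because $b_q(\mathbf{1})=\sum_{v\ne q}\sum_p j_q(p,v)>0$ (for instance $j_q(p,p)=r(p,q)>0$ for $p\ne q$), which yields strictly fewer than $3B$ moves, as claimed. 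The main obstacle is not the monovariant bookkeeping, which is immediate from Proposition~\ref{energyproposition2}, but rather setting up the two coordinate invariants — $D_2(v)\le\deg(v)-1$ out of Step~2 and $-D_1(v)\le\deg(v)-1$ from Proposition~\ref{DegreeProp} — so that every $b_q$-value appearing in the analysis can be uniformly compared against $b_q(K^{+})$.
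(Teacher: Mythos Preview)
Your proof is correct and follows essentially the same approach as the paper: both use the monovariant $b_q$ together with the coordinate bound $|D_1(v)|<\deg(v)$ from Proposition~\ref{DegreeProp} to control Steps~2 and~3, and then translate the resulting inequalities into the $j$-function form via Lemma~\ref{g_lemma}(b). Your invariant formulation ($D'(v)\le\deg(v)-1$ throughout Step~2) is in fact a slightly cleaner way to phrase the paper's claim that the output of Step~2 satisfies $0\le D_2(v)<\deg(v)$; the only minor point is that you invoke the output $D_2$ before termination is established, but since your invariant holds at every intermediate stage, the bound $b_q(D')\le b_q(K^{+})$ applies throughout and termination follows immediately.
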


\begin{proof}
(a) By Proposition~\ref{DegreeProp}, $|D_1(v)|<\deg(v)$ for all $v \neq q$. So for any  $v \neq q$ with $D_1(v)<0$, only one borrowing is needed to make the vertex positive. Moreover, the resulting positive number will be less than $\deg(v)$. This fact, together with Proposition~\ref{DegreeProp}, guarantees that the output of Step 2 satisfies $0 \leq D_1(v)<\deg(v)$ for all $v \neq q$. The result now follows from Lemma~\ref{g_lemma} and Proposition~\ref{energyproposition2} (b); the value of $b_q(\cdot)$ is at least $\sum_{v}{\sum_{p}{j_q(p,v)D_1(v)}}$ on the input of Step 2, and is at most $\sum_{v}{\sum_{p}{j_q(p,v)(\deg(v)-1)}}$. Moreover, with each borrowing $b_q(\cdot)$ increases by 1.

(b) This again follows from Lemma~\ref{g_lemma} and Proposition~\ref{energyproposition2} (b). Note that $D_2(v) <\deg(v)$, and that no vertex $v \ne q$ can become negative in Step 3.

(c) This follows from parts (a) and (b) and the inequalities 
\[b_q(K^{+}-D_1)<2\sum_{v}{\sum_{p}{j_q(p,v)\deg(v)}} \ , \] 
\[b_q(D_2)<\sum_{v}{\sum_{p}{j_q(p,v)\deg(v)}} \ . \]

\end{proof}

\subsection{Comparison to other techniques in the literature}
\label{RunningTimeSec}
By basic properties of the $j$-function (see \S~\ref{jfunctions}), we have
\[
3\sum_{v}\sum_{p}{j_q(p,v)\deg(v)} \leq 3(n-1)\sum_{v}{r(v,q)\deg(v)} \ .
\]
By Proposition~\ref{time} (specifically \eqref{eq:MainAlgBound}),
it follows that Algorithm~\ref{MainAlgorithm} terminates in fewer than
\begin{equation} \label{MainTime}
3(n-1)\sum_{v}{r(v,q)\deg(v)}
\end{equation}
chip-firing moves.

The bound \eqref{MainTime} can be computed in matrix multiplication time $O(n^{\omega})$ (currently $\omega=2.376$) because $j_q(p,v)$ is simply the $(p,v)$-entry of the matrix  $L_{(q)}$ in Construction~\ref{GeneralizeInvConst1}. 

\begin{remark} \label{TimeAnalRmk}
There are several ways to bound the expression in \eqref{MainTime} in terms of more familiar invariants of the graph.  
For example:

\begin{enumerate}
\item[(1)] Let $R_{\text{max}}$ be the maximum effective resistance between vertices of $G$ and let $\Delta_{\text{max}}$ be the maximum degree of a vertex in $G$. Then \eqref{MainTime} is bounded above by
\[
3(n-1)R_{\text{max}}\sum_{v \ne q}{\deg(v)}  \ ,
\]
which is, in turn, bounded above by
\[
 3(n-1)^2R_{\text{max}}\Delta_{\text{max}} \ .  
\]
These estimates give a factor $n$ improvement over the bound for the running time of Algorithm~\ref{MainAlgorithm} 
which could be derived using the technique in \cite{Rotor} by Holroyd, Levine, M{\'e}sz{\'a}ros, Peres, Propp, and Wilson.

\item[(2)] Using Foster's network theorem, one can show that 
\[ r(v,q) < 3\sum_{v\in V(G)}{(\deg(v)+1)^{-1}}\] (see, e.g., \cite[proof of Theorem 6]{Coppersmith}). 
So another upper bound for the running time of Algorithm~\ref{MainAlgorithm} is
\[ 9(n-1)\sum_{v\in V(G)}{(\deg(v)+1)^{-1}}\sum_{v \ne q}{\deg(v)} \ . \]
This is a  good bound when the graph is close (on average) to being regular.
If one uses the fact that degree of a vertex is at least the edge-connectivity $\lambda$ of the graph, one gets a bound of 
the form $O(n^2m/\lambda)$ for the running time of Algorithm~\ref{MainAlgorithm}. 
This is (up to constant factors) the bound that one can derive from the techniques of van den Heuvel \cite{Heuvel01}.

\item[(3)] Let $\lambda_1$ (called the {\em algebraic connectivity} of $G$) be the smallest 
non-zero eigenvalue of $Q$.  Then $r(p,q) \leq \frac{2}{\lambda_1}$ for every $p,q \in V(G)$.
Indeed, the proof of the Cauchy-Schwarz inequality shows that for any positive semidefinite matrix $L$ with largest eigenvalue $\eta$, and for all vectors $x$ and $y$, $|x^{T}Ly|\leq \eta \|x\|_2 \|y\|_2$;
if we apply this estimate with $x=y=(p)-(q)$, $L=Q^{+}$ (cf. Construction~\ref{GeneralizeInvConst2}), then
$\eta=1/\lambda_1$, and we obtain
\[r(p,q)=\langle (p)-(q), (p)-(q) \rangle \leq \frac{2}{\lambda_1} \ .\]
Therefore an upper bound for the running time of Algorithm~\ref{MainAlgorithm} is
\[ \frac{6(n-1)}{\lambda_1}\sum_{v\ne q}{\deg(v)}. \]
This is the bound that one can derive from the techniques of 
Bj{\"o}rner--Lov{\'a}sz--Shor \cite{Lovasz91} or Chung--Ellis \cite{chung99}.

\item[(4)] By Rayleigh's monotonicity law, we have $R_{\max} \leq \diam(G)$, 
where $\diam(G)$ denotes the {\em diameter} of $G$. Equality holds if and only if $G$ is a path. In fact $R_{max}$ is much smaller than $\diam(G)$ in a general graph.
Another upper bound for the running time of Algorithm~\ref{MainAlgorithm} is
\[ 3(n-1) \diam(G)\sum_{v\ne q}{\deg(v)} \ . \]
This is the bound that one can derive from the techniques of Tardos \cite{Tardos}.
\end{enumerate}
\end{remark}

\begin{remark}
Items (3) and (4) in the previous remark clarify the relationship between the seemingly different approaches of 
Tardos and  Bj{\"o}rner--Lov{\'a}sz--Shor.
\end{remark}


\section{Some applications of the algorithms}
\label{applicationsSec}

\subsection{Bijective matrix-tree theorem}
\label{KirchhoffSection}

Kirchhoff's celebrated {\em matrix-tree theorem} is usually formulated as follows.  Let $G$ be a (connected) graph.
Following the terminology from Construction~\ref{GeneralizeInvConst1}, fix an integer $1 \leq i \leq n$ and let $Q_i$ be the invertible $(n-1) \times (n-1)$ matrix obtained from the Laplacian matrix $Q$ of $G$ by deleting $i^{\rm th}$ row and $i^{\rm th}$ column from $Q$.  

\begin{theorem}[Kirchhoff's matrix-tree theorem \cite{Kirchhoff1847}] \label{thm:Kirchhoff}
The number of spanning trees in $G$ is equal to $|\det(Q_i)|$.
\end{theorem}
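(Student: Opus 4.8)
The plan is to deduce the classical determinantal statement from the combinatorial and potential-theoretic machinery already assembled, in three moves: count spanning trees by reduced divisors, identify that count with $|\Jac(G)|$, and finally evaluate $|\Jac(G)|$ as $|\det(Q_i)|$ by a change-of-basis computation over $\ZZ$. First I would take $q = v_i$ as the distinguished vertex and fix any degree $d$. By the Cori--Le Borgne bijection (Theorem~\ref{thm:CL}), the spanning trees of $G$ are in bijection with the $q$-reduced divisors of degree $d$; in particular the number of spanning trees equals the number of such divisors, independently of $d$.

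Next I would invoke Corollary~\ref{energyCor}: every divisor is linearly equivalent to a \emph{unique} $q$-reduced divisor. Restricting to degree $d$, the $q$-reduced divisors of degree $d$ are precisely the canonical representatives of the linear equivalence classes in $\Div^d(G)/\Prin(G)$. Since $\Div^d(G)$ is a coset of $\Div^0(G)$, this quotient is a torsor under $\Jac(G) = \Div^0(G)/\Prin(G)$, so there are exactly $|\Jac(G)|$ such classes. Combining with the previous step, the number of spanning trees equals $|\Jac(G)|$.

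Finally I would compute $|\Jac(G)| = |\det(Q_i)|$ by elementary linear algebra. With the labeling in which $q = v_i$, the group $\Div^0(G)$ is free of rank $n-1$ with basis $\{(v_k)-(v_i)\}_{k \ne i}$. The subgroup $\Prin(G) = \Image(\Delta)$ is generated by the divisors $\Delta(\chi_{\{v_j\}})$, whose coordinate vector is the $j$-th column of $Q$; because the columns of $Q$ sum to zero, expressing $\Delta(\chi_{\{v_j\}})$ in the chosen basis merely drops the $i$-th coordinate. The single relation $\sum_j \Delta(\chi_{\{v_j\}}) = \Delta(\mathbf{1}) = 0$ lets me discard the generator $j=i$, and the remaining generators $\{\Delta(\chi_{\{v_j\}})\}_{j \ne i}$ then have coordinate matrix exactly $Q_i$. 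As $Q_i$ is nonsingular, these form a basis of $\Prin(G)$, and the index of a full-rank sublattice equals the absolute value of the determinant of the change-of-basis matrix, giving $|\Jac(G)| = |\det(Q_i)|$.

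The conceptual work has all been front-loaded into Theorem~\ref{thm:CL} and Corollary~\ref{energyCor}, so the main obstacle is not difficulty but bookkeeping: I must take care that $\{\Delta(\chi_{\{v_j\}})\}_{j\ne i}$ genuinely generates $\Prin(G)$ as an abelian group (not merely rationally) and that its coordinate matrix is $Q_i$ with the correct row-and-column deletion, so that the Smith-normal-form index formula applies cleanly and the final count is seen to be independent of the auxiliary choices of $d$ and of the labeling.
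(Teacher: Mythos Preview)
Your proposal is correct and follows essentially the same route as the paper: the paper deduces Theorem~\ref{thm:Kirchhoff} by proving its canonical reformulation (Theorem~\ref{thm:CanonicalKirchhoff}) via the Cori--Le~Borgne bijection together with Corollary~\ref{energyCor}, and then invokes ``elementary group theory (e.g.\ the theory of the Smith normal form)'' for the identity $|\Jac(G)| = |\det(Q_i)|$. Your three moves match this exactly; the only difference is that you spell out the lattice-index computation for $|\Jac(G)| = |\det(Q_i)|$ by an explicit change of basis, whereas the paper leaves that step as a citation to Smith normal form.
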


Our aim in this section is to give an ``efficient bijective'' proof of Theorem~\ref{thm:Kirchhoff}.  In order to make sense of this goal,
it is useful to reformulate Kirchhoff's theorem in a more natural way in terms of the Jacobian group%
\footnote{Although we do not need this here, it is worth mentioning that the energy pairing descends to a non-degenerate $\QQ/\ZZ$-valued bilinear form on $\Jac(G)$; it is called the ``monodromy pairing'' in \cite{FarbodDLP09}.} 
$\Jac(G) = \Div^0(G) / \Prin(G)$,
where $\Div^0(G)$ is the subgroup of $\Div(G)$ consisting of divisors of degree zero and  $\Prin(G)$ (the group of principal divisors) is the image of the  Laplacian operator $\Delta : \M(G) \to \Div(G)$.

By elementary group theory (e.g. the theory of the {\em Smith normal form}), one sees that $\Jac(G)$ is the torsion part\footnote{The full cokernel $\Pic(G)= {\Div(G)}/{\Prin(G)}$ is isomorphic to $\ZZ \oplus \Jac(G)$.} of the cokernel of $Q: \ZZ^n \to \ZZ^n$, and the order of $\Jac(G)$ is equal to $|\det(Q_i)|$. 
We may thus reformulate Kirchhoff's theorem as follows:

\begin{theorem}[Kirchhoff's matrix-tree theorem, canonical formulation] \label{thm:CanonicalKirchhoff}
The number of spanning trees in $G$ is equal to $|\Jac(G)|$.
Moreover, there exists an efficiently computable bijection between elements of $\Jac(G)$ and spanning trees of $G$.
\end{theorem}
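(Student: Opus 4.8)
\textbf{The plan} is to combine the two halves of the statement and make the bijection itself the source of the counting identity, rather than proving the count separately. The key observation is that $\Jac(G)=\Div^0(G)/\Prin(G)$ is in canonical bijection with the set of degree-$d$ divisors modulo linear equivalence, for any fixed degree $d$: indeed, translating by $d(q)$ gives a bijection between $\Div^0(G)/\Prin(G)$ and $\{D:\deg(D)=d\}/\!\sim$. By Corollary~\ref{energyCor}, every linear equivalence class of degree-$d$ divisors contains exactly one $q$-reduced divisor, so the set of $q$-reduced divisors of degree $d$ is a canonical set of representatives for $\Jac(G)$; hence there is a natural bijection between $\Jac(G)$ and $\{q\text{-reduced divisors of degree }d\}$. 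Combining this with the Cori--Le Borgne bijection of Theorem~\ref{thm:CL}, which matches $q$-reduced divisors of degree $d$ with spanning trees of $G$, yields a bijection
\[
\Jac(G) \;\longleftrightarrow\; \{q\text{-reduced divisors of degree }d\} \;\longleftrightarrow\; \{\text{spanning trees of }G\}.
\]
In particular $|\Jac(G)|$ equals the number of spanning trees, which is the counting assertion of the theorem.

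First I would make the reduction to reduced divisors precise: fix the base vertex $q$ and the degree $d$ (the natural choice being $d=g=m-n+1$, as in the remark after Theorem~\ref{thm:CL}), and describe the map sending a class $[D]\in\Jac(G)$ to the unique $q$-reduced representative of $D+d(q)$. This map is well-defined and injective by the uniqueness in Corollary~\ref{energyCor}, and surjective because every $q$-reduced divisor of degree $d$ arises this way. Next I would invoke Theorem~\ref{thm:CL} to transport this to spanning trees. The composite is a genuine bijection, so the equality $|\Jac(G)|=\#\{\text{spanning trees}\}$ follows at once, giving a bijective proof of Kirchhoff's theorem.

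The remaining work, and the point that requires the most care, is the word \emph{efficiently computable}. The forward direction is easy: given $[D]\in\Jac(G)$, Algorithm~\ref{MainAlgorithm} computes the $q$-reduced representative in polynomial time (with the running-time guarantees established in Proposition~\ref{time}), and Algorithm~\ref{Reduced2Trees} then produces the spanning tree in time $O(n^2)$. The inverse direction is also efficient: given a spanning tree $T$, Algorithm~\ref{Trees2Reduced} outputs the corresponding $q$-reduced divisor $D_T$ of degree $d$ in time $O(n^2)$, and subtracting $d(q)$ recovers the class in $\Jac(G)$. Thus both directions of the bijection are realized by the explicit polynomial-time algorithms already developed, which is precisely what makes the bijection efficiently computable.

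\textbf{The main obstacle} I anticipate is not any single hard lemma but the bookkeeping needed to phrase the bijection \emph{canonically}, i.e.\ to confirm that the resulting correspondence between $\Jac(G)$ and spanning trees does not depend on the choice of $d$ (it does depend on $q$ and on the chosen total order on $E(G)$, which is acceptable) and that the degree-shift identification of $\Jac(G)$ with degree-$d$ classes is compatible with the group structure one wants to remember. Once the identification $\Jac(G)\cong\{q\text{-reduced divisors of degree }d\}$ is set up correctly, the rest follows immediately from Corollary~\ref{energyCor} and Theorem~\ref{thm:CL}, so the essential content of the proof is really the assembly of previously established results rather than new estimates.
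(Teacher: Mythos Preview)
Your proposal is correct and follows essentially the same route as the paper: compose the bijection $\Jac(G)\leftrightarrow\{q\text{-reduced divisors of fixed degree}\}$ coming from Corollary~\ref{energyCor} with the Cori--Le Borgne bijection of Theorem~\ref{thm:CL}, and invoke Algorithms~\ref{MainAlgorithm}, \ref{Reduced2Trees}, \ref{Trees2Reduced} for efficiency. The only cosmetic differences are that the paper simply fixes $d=0$ rather than a general $d$, and your ``main obstacle'' about canonicality is a non-issue---the paper explicitly acknowledges just before the proof that the bijection cannot be canonical and necessarily depends on the choice of $q$ and the edge ordering.
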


Note that such a bijection cannot be canonical, as that would imply the existence of a distinguished spanning tree in $G$ corresponding to the identity element of $\Jac(G)$, but it is
clear (think of the case where $G$ is an $n$-cycle) that there is in general no distinguished spanning tree.
Therefore, one needs to make some choices to write down a bijection.

\begin{proof}[Proof of Theorem~\ref{thm:CanonicalKirchhoff}]
By Corollary~\ref{energyCor}, if we fix a vertex $q$ of $G$, there is a unique $q$-reduced divisor representing each class in 
$\Div^0(G)$.  In particular, there is an explicit bijection between $\Div^0(G)$ and the set of $q$-reduced divisors of degree $0$.  If in addition we choose a total order on $E(G)$, then the algorithms in \S\ref{CoriLeBorgneSection} show that there is
a bijection between $q$-reduced divisors of degree $0$ and spanning trees of $G$.

We have shown in \S\ref{MainAlgorithmSubsection} that the unique $q$-reduced representative for each class in 
$\Div^0(G)/\Prin(G)$ can be computed efficiently. 
And in \S\ref{CoriLeBorgneSection} we proved that the bijection between $q$-reduced divisors of degree $0$ and 
spanning trees of $G$ is also efficient. 
\end{proof}

Kirchhoff's matrix-tree theorem is of course a classical result. The main new contribution here is to observe that the bijections between reduced divisors and spanning trees (as described in \S\ref{CoriLeBorgneSection} and Remark~\ref{CriticalRemark}), in conjunction with Corollary~\ref{energyCor}, furnish a simple bijective proof of Kirchhoff's theorem, and moreover this bijection is efficiently computable.



\subsection{Random spanning trees}
\label{SpanningTreesSubsection}

The {\em random spanning tree} problem has been extensively studied in the literature and there are two known types of algorithms: determinant based algorithms (e.g. \cite{Guenoche83,CMN96,Kulkarni90}) and random walk based algorithms (e.g. \cite{Broder89,Aldous90, Wilson96, KM09} and \cite[Chapter 4]{LyonsPeres09}).

Here we give a new deterministic polynomial time algorithm for choosing a random spanning tree in a graph $G$.  

Although the bound we obtain for the running time of our algorithm does not beat the current best known running time 
$O(n^{\omega})$ of \cite{CMN96}, we believe that our algebraic method has some advantages.  For example, it is 
trivial that the output of our algorithm is a uniformly random spanning tree, whereas in other algorithms
(e.g. \cite{CMN96}) this fact is non-trivial and requires proof. See Remark~\ref{RandomTreeSellingPoints} for another
advantage.

\medskip

The idea behind our algorithm is very simple.  Fix a vertex $q \in V(G)$ and a total ordering of $E(G)$.  
The first step in the algorithm is to compute a presentation of $\Jac(G)$ as a direct
sum of cyclic groups; this can be done efficiently by computing the Smith normal form for $Q$.
Once $\Jac(G)$ is presented in this way, it is clear how to select a random element.  Having done so, one 
computes the corresponding $q$-reduced divisor and then the corresponding spanning tree.

This procedure is formalized in Algorithm~\ref{RandomTree} below.

\begin{algorithm}
\caption{Choosing a uniformly random spanning tree}\label{RandomTree}

\KwIn{A graph $G$.}
\KwOut{A uniformly random spanning tree of $G$.}
(1) Fix a vertex $q \in V(G)$ and a total ordering of $E(G)$.
\\(2) Compute the Smith normal form of the Laplacian matrix of G to find:
\\- invariant factors $\{n_1,\ldots,n_s\}$,
\\- generators $\{\mathbf{g}_1, \ldots , \mathbf{g}_s\}$ for $\Jac(G)$ (thought of as elements of $\Div^0(G)$).
\\(3) Choose a random integer $0 \leq a_i \leq n_i-1$ for ($1 \leq i \leq s$).
\\(4) Compute the divisor $D=\sum_{i=1}^{s}{a_i\mathbf{g}_s}$.
\\(5) Use Algorithm~\ref{MainAlgorithm} to find the unique $q$-reduced divisor $D'$ equivalent to $D$.
\\(6) Use Algorithm~\ref{Reduced2Trees} to find the spanning tree corresponding to $D'$.

\end{algorithm}

To our knowledge, the fastest known Smith normal form algorithm (Step (2)) is given in \cite{KaltofenVillard04} and has running
time $(n^{2.697263}\log{\|Q\|})^{1+o(1)}$, where $\|Q\|$, for our application, means the maximal degree of a vertex $\Delta_{\max}$. See also \cite{Saunders} for a fast and practical Smith normal form algorithm.
For the running time of Step (5) see \eqref{eq:MainAlgBound}, \eqref{MainTime}, and Remark~\ref{TimeAnalRmk}. 
Step (6) can be done in $O(n^2)$ steps.

\begin{remark} \label{RandomTreeSellingPoints}
Note that for repeated sampling of random spanning trees in $G$, one has to perform steps (1) and (2) of
Algorithm~\ref{RandomTree} only once. Note also that if there are $N$ spanning trees in $G$, our algorithm uses only $\log_2{N}$ random bits for generating each random spanning tree. Thus our algorithm may have some advantages over existing methods for sampling multiple spanning trees. For example, very few random bits are required in our algorithm to generate pairwise independent spanning trees; to generate $k$ pairwise independent spanning trees, the naive approach would use $k \cdot \log_2{N}$ random bits. But one can use standard methods to pick pairwise independent elements of the group using only $O(\log_2{N})$ random bits. Also, it is possible with our method to sample multiple spanning trees according to joint distributions other that the uniform distribution.  (We thank Richard Lipton for these observations; see \cite{LiptonBlog}).
\end{remark}


\subsection{Other applications}

We list briefly some other applications of our algorithms:

\begin{enumerate}
\item[(1)] (The group law attached to chip-firing games)
If we fix a vertex $q \in V(G)$, then $\Jac(G)$ induces a group structure on the set of $q$-reduced divisors ($G$-parking functions) or $q$-critical divisors of $G$. The latter is called {\em critical group} (or sandpile group) of $G$.  Adding two
elements in one of these groups requires first adding the given divisors as elements of $\Div(G)$, and then finding the unique $q$-reduced or $q$-critical divisor equivalent to the sum. 
Our algorithm for finding $q$-reduced divisors can be used to efficiently compute the group law in these groups.
A different approach for performing the group operation is given in \cite{Heuvel01} using ``oil games''. The problem of finding a ``purely algebraic'' method for computing the sum of two elements of the critical group (and analyzing the running
time of the resulting algorithm) was posed as an open problem by Chung and Ellis in \cite{chung99}.

\medskip

\item[(2)] (Determining whether the dollar game is winnable)
In \cite{BN1}, the authors consider a dollar game played on the vertices of $G$.  
Given a divisor $D$, thought of as a configuration of dollars on $G$, the goal of the game is to get all the vertices out
of debt via borrowing and lending moves, i.e., to find an effective divisor $D'$ linearly equivalent to $D$.
By the proof of Theorem 3.3 in \cite{BN1}, the game is winnable iff the unique $q$-reduced divisor equivalent to
$D$ is effective.
As a corollary, once we can efficiently compute the $q$-reduced divisor associated to a given configuration, we can
efficiently decide whether or not there is a winning strategy, and when there is one we can efficiently compute a sequence of winning moves.

These considerations are related to the Riemann-Roch theorem for graphs from \cite{BN1}.
To any divisor $D\in \Div(G)$ one associates an integer $r(D) \geq -1$, called the {\em rank} of $D$,
such that $r(D) \geq 0$ iff the unique $q$-reduced divisor equivalent to $D$ is effective.
Our algorithm for finding $q$-reduced divisors can therefore be used to efficiently check whether or not $r(D) \geq 0$.
More generally, we can efficiently check whether $r(D) \geq c$ for any fixed constant $c$. It is an open problem to determine whether or not one can compute $r(D)$ itself in polynomial time. For a study of this problem, see \cite{Madhu11}.
\end{enumerate}

\appendix

\section{Metric graphs}
\label{MetricGraphSection}

Our goal in this section is to extend some of the considerations from \S\ref{energySection}, as well as
Theorem~\ref{energyThm} and Corollary~\ref{energyCor}, to the setting of {\em metric graphs}.
The metric graph analogue of Corollary~\ref{energyCor} has already been proved in
\cite{MK08} and \cite{Hladkyetal}, but since this is the main ingredient needed to prove the Riemann-Roch theorem for tropical curves (an important result in tropical geometry), 
it seems worthwhile to present the new proof which follows.
Our main new result is Theorem~\ref{EnergyThm} (the metric graph analogue of Theorem~\ref{energyThm}).

\subsection{Background  on  metric  graphs  and  potential  theory}
	
We assume that the reader is familiar  with  some  basic  facts concerning potential theory on metric graphs; see for example \cite{BX06, BakerRumely}. We recall here the main facts and terminology which we will use.
	
Let $\Gamma$ be a metric graph. Let $\mathcal{C}(\Gamma)$ denote the $\RR$-algebra of continuous real-valued functions on $\Gamma$, let $\CPA(\Gamma)
\subset \mathcal{C}(\Gamma)$ be the vector space consisting of all continuous
piecewise affine functions on $\Gamma$, and let $R(\Gamma)$ be the subgroup 
of $\CPA(\Gamma)$  consisting  of  continuous piecewise  affine  functions  with 
integer slopes (this can be viewed as the space of {\em tropical rational functions} on $\Gamma$, 
cf. \cite{GathmannKerber, MK08}).
	
Let $\Delta$ be the Laplacian operator on $\Gamma$ (see \cite{BX06, BakerRumely}), which takes a certain subspace $\BDV(\Gamma)$ of $\mathcal{C}(\Gamma)$ into the space of measures of total mass zero on $\Gamma$.  (The abbreviation $\BDV$ stands for ``bounded differential variation''.)  More  precisely,  let $\Meas^0(\Gamma)$  be  the  vector  space  of  finite  signed  Borel measures  of total  mass  zero  on  $\Gamma$  and let  $\RR\subset \mathcal{C}(\Gamma)$  denote  the  space  of constant  functions  on  $\Gamma$. Then  the  space  $\BDV(\Gamma)$  is  characterized  by  the property that 
$f \mapsto \Delta(f)$ induces an isomorphism of vector spaces 
	
\[\BDV(\Gamma)/ \RR \longleftrightarrow \Meas^0(\Gamma) \ . \]
	
For a fixed $q  \in \Gamma$, an inverse to $\Delta$ on $\{f \in \BDV(\Gamma)  \, | \, f (q) = 0\}$ is given by 
\begin{equation} 
\nu \mapsto \int_{\Gamma} j_q (x, y)d\nu(y) \in \BDV(\Gamma) 
\end{equation}
where $j_q (x, y)$ is the fundamental potential kernel on $\Gamma$
(defined below, see also  \cite{BakerRumely}).  It  is  shown  in  \cite{BakerRumely} that  $\CPA(\Gamma) \subset \BDV(\Gamma)$  and  that  $\Delta(f)$  is  a discrete measure if 
and only if $f \in \CPA(\Gamma)$. 
	
For $f \in \CPA(\Gamma)$ we have
\[
\Delta (f) =  \sum_{p \in \Gamma}{\sigma_{p} (f)}
\]	
where $-\sigma_p(f)$ is the sum of the slopes of $f$ in all tangent directions emanating from $p$.
(Note that $\Delta(f) = -\divisor(f)$ with the conventions from \cite{GathmannKerber, Hladkyetal}.)
This formula uniquely characterizes $\Delta$ on $\BDV(\Gamma)$, because $\CPA(\Gamma)$ is dense in $\mathcal{C}(\Gamma)$ and for $f \in \CPA(\Gamma)$ and $g \in \BDV(\Gamma)$, we have 
\begin{equation} \label{LaplaceIntegral}
\int_{\Gamma}f \Delta (g) = \int_{\Gamma} g \Delta (f) \ .
\end{equation}
	
(In fact, (\ref{LaplaceIntegral}) holds for all $f, g \in \BDV(\Gamma)$.) 
	
For fixed $q$ and $y$, one can define $j_q (x, y)$ as the unique element
of $\CPA(\Gamma)$ such that $\Delta_x j_q(x, y) = \delta_y(x) - \delta_q(x)$. 
In terms of electrical network theory, if we think of $\Gamma$ as an electrical
network with resistances given by the edge lengths, then $j_q(x, y)$ is the
potential at $x$ when one unit of current enters the network  at $y$  and  exits
at  $q$,  with  reference voltage zero  at $q$. It is  a  basic fact,  proved 
in  \cite{ChinburgRumely} (see  \cite{BX06}  for  an  alternate  proof),  that  $j_q(x, y)$  is  jointly continuous in $x$, $y$, and $q$.
	
There is a canonical bilinear form, called the {\em energy pairing}, on
$\Meas^0(\Gamma)$. It  can be  defined  in many equivalent  ways;  if  $\mu =
\Delta(f)$  and $\nu = \Delta(g)$  are in $\Meas^0(\Gamma)$, then
\begin{equation} \label{energy}
\langle \mu, \nu \rangle = \int f d\nu = \int g d\mu = \int f'(x)g'(x) dx = \int j_q (x, y) d\mu(x)d\nu(y) \ .
\end{equation}
It is proved in \cite [Theorem 10.4] {BakerRumely} that the energy pairing is {\em positive definite}. 
If $\mu \in \Meas^0(\Gamma)$ is a discrete measure, the energy pairing
$\langle \mu, \mu \rangle$ has a nice interpretation in terms of electrical networks.  Write $\mu = \sum a_i
\delta_{p_i} - \sum b_j \delta_{p_j}$ with $a_i , b_j \geq 0$. If $a_i$ units of
current  enter  the  network  at  each $p_i$   and  $b_j$ units of current exit
the network at each $q_j$, then $\langle \mu, \mu \rangle$ is the total energy
(or power) dissipated in the network.  The different formulas for the energy pairing in (\ref{energy})
correspond to the different classical formulas for computing the power (as the sum of $V I$, $I^2R$, etc.).

\subsection{Existence  and  uniqueness  of  reduced  divisors}

Let  $\Div(\Gamma)$ (the group of {\em divisors} on $\Gamma$) be the
free abelian group on $\Gamma$. We  can  identify  a  divisor $D= \sum a_i(p_i)$  on 
$\Gamma$  with  the  discrete  measure $\mu_D :=  \sum a_i \delta_{p_i}$.  
We will frequently  identify  divisors  and  measures  and  will often  not  explicitly 
differentiate  between $D$ and $\mu_D$. 

Let $\Div^0 (\Gamma)$  be  the subgroup
of  divisors  of  degree zero  on  $\Gamma$,  and  let  $\Prin(\Gamma)$  be 
the  subgroup $\{ \Delta (f) \, | \,  f \in  R(\Gamma)\}$  of  $\Div^0 (\Gamma)$ 
consisting  of {\em principal  divisors}. We write $D \sim D'$ if $D - D'$ belongs to $\Prin(\Gamma)$ and say that $D$
and $D'$ are {\em linearly equivalent}. 
	
Fix  $q  \in \Gamma$.  A  divisor  $D  =  \sum a_p(p)$  is  called 
{\em effective} if $a_p \geq 0$  for  all $p$, and is called {\em effective outside $q$}  if
$a_p \geq 0$ for all $p  \ne  q$.  For $D \in \Div(\Gamma)$, we define  the 
complete  linear  system  $|D|$ to  be  the  set  of  all  effective  divisors
$E$  equivalent  to  $D$,  i.e.,  $|D| =  \{E  \in \Div(\Gamma)  \, | \, E  \ge 0,    
E  \sim D\}$. Similarly, we define $|D|_q$ to be the set of divisors equivalent
to $D$ which are effective outside $q$: 
	\[
	|D|_q = \{E \in \Div(\Gamma) \, | \, E(p) \geq 0 , \forall p \ne q,  E \sim D\}
. 
	\]
	
	\begin{definition}
	Fix  $q  \in \Gamma$.     A  divisor $D$  on  $\Gamma$  is  called 
{\em $q$-reduced} if  it  satisfies 
	the following two conditions: 
	\begin{itemize}
	\item[(R1)]  $D$ is effective outside $q$. 
	\item[(R2)]  If $f \in R(\Gamma)$  is  non-constant  and  has  a  global
maximum  at $q$,  then 
\[		D + \Delta(f) \not \in |D|_q \ .  \]
	\end{itemize}
\end{definition}

Note  that  this  is  not  the  usual  definition  of  reduced  divisor 
on  a  metric graph, but it is easily seen to be equivalent to the definition
used in \cite{Amini, Hladkyetal, Luoye}: 
	
\begin{lemma} \label{ReducedDefLem}
	A divisor $D$  is $q$-reduced if and only if it satisfies (R1) and 
	
	(${\rm R2}^\prime$)  For every closed connected set $X  \subseteq \Gamma$  not
containing $q$, there exists a point $p \in \partial X$  such that $D(p) <
\outdeg_X (p)$. 
\end{lemma}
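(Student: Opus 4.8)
The plan is to prove the equivalence by establishing the two implications contrapositively, working throughout under the standing hypothesis (R1) that $D$ is effective outside $q$. The single computational fact I will use repeatedly is the formula for the Laplacian of a function $f \in R(\Gamma)$: as recalled above, $\Delta(f) = \sum_{p} \sigma_p(f)$, where $-\sigma_p(f)$ is the sum of the slopes of $f$ in all tangent directions emanating from $p$. Since $f$ has integer slopes, each such slope is an integer, and along any direction in which $f$ strictly increases the slope is $\geq 1$. Thus for any divisor and any point, $(D + \Delta(f))(p) \geq 0$ is equivalent to $D(p) \geq -\sigma_p(f)$.

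First I would show that if $({\rm R2}^\prime)$ fails then so does (R2). Suppose there is a closed connected set $X \subseteq \Gamma$ with $q \notin X$ such that $D(p) \geq \outdeg_X(p)$ for every $p \in \partial X$; note that $\partial X$ is finite because $X$ is a connected closed subset of a finite metric graph. I would then \emph{fire $X$ by a small amount}: fix $\epsilon > 0$ smaller than every edge length and smaller than the distance from $q$ to $X$, and let $f$ be the function equal to $-\epsilon$ on $X$, equal to $0$ outside the $\epsilon$-neighborhood of $X$, and increasing with slope $1$ from $-\epsilon$ to $0$ along each of the $\outdeg_X(p)$ edges that leave $X$ at a boundary point $p$. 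Because $\epsilon$ is smaller than every edge length, each of these ramps stays in the interior of a single edge and meets no vertex, so $\Delta(f)$ is supported on $\partial X$, where $\sigma_p(f) = -\outdeg_X(p)$, together with the (non-$q$) endpoints of the ramps, where $\Delta(f)$ contributes $+1$. Consequently $f \in R(\Gamma)$ is non-constant with global maximum $0$ attained at $q$, and $D + \Delta(f)$ is effective outside $q$: at each $p \in \partial X$ we have $(D+\Delta(f))(p) = D(p) - \outdeg_X(p) \geq 0$, and at every other point off $q$ the coefficient only stays the same or increases. Hence $D + \Delta(f) \in |D|_q$, contradicting (R2).

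Conversely, I would show that if (R2) fails then so does $({\rm R2}^\prime)$. Suppose $f \in R(\Gamma)$ is non-constant, has global maximum at $q$, and satisfies $D + \Delta(f) \in |D|_q$; let $m = \min_{\Gamma} f$, which is strictly less than $f(q)$. Take $X$ to be any connected component of the closed level set $\{x : f(x) = m\}$; then $X$ is closed and connected and $q \notin X$. I claim $X$ violates $({\rm R2}^\prime)$. Indeed, fix $p \in \partial X$. Every tangent direction at $p$ that stays inside $X$ has $f \equiv m$, hence slope $0$, while every direction leaving $X$ leads immediately into $\{x : f(x) > m\}$ (a direction along which $f = m$ would lie in the same connected component $X$), so $f$ strictly increases there and the slope is $\geq 1$. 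Since there are exactly $\outdeg_X(p)$ leaving directions, summing the outgoing slopes gives $-\sigma_p(f) \geq \outdeg_X(p)$. Because $D + \Delta(f)$ is effective outside $q$ and $p \neq q$, we obtain $D(p) \geq -\sigma_p(f) \geq \outdeg_X(p)$. As this holds for every $p \in \partial X$, the set $X$ furnishes a counterexample to $({\rm R2}^\prime)$.

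The routine verifications — that $\partial X$ is finite, that the minimum-level component $X$ is closed, connected, and misses $q$, and that the boundary computations of $\sigma_p$ behave as stated — are straightforward. The one genuinely delicate point, and the place I would spend the most care, is the construction of the firing function $f$ in the first implication: one must keep the height $\epsilon$ small enough that the ``burning ramps'' neither reach a vertex nor overlap, since otherwise branching would create spurious points where $\Delta(f)$ is negative and destroy effectiveness outside $q$. Once the ramps are confined to edge interiors this difficulty disappears and the computation of $\Delta(f)$ reduces to the single-edge case.
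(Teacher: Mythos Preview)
Your proof is correct and follows essentially the same approach as the paper's: both directions rely on the same two constructions --- the ``fire $X$ by $\epsilon$'' function in one direction and a connected component of the minimum level set of $f$ in the other --- with the only difference being that you phrase each implication contrapositively (and shift the firing function by the additive constant $-\epsilon$), which does not change the substance of the argument.
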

	
\begin{proof}
	Suppose  $D$  is  effective  outside  $q$  and  satisfies  (R2).  Given 
a  closed 
	connected  set  $X$  not  containing  $q$,  construct  a  rational 
function $f \in R(\Gamma)$ 
	which is $0$ on $X$ and $\epsilon$ outside of an $\epsilon$-neighborhood of $X$ for
some sufficiently 
	small  $\epsilon > 0$, with slope $1$ in each 
	outgoing direction from $X$  and $f(q) = \epsilon$ (so $f$  achieves its
maximum at $q$). By  (R2),  there  is  a  point $p \in \Gamma \backslash \{q\}$
such  that  $(D + \Delta(f)) (p)  < 0$. Since $(\Delta(f)) (p) = -\outdeg_X
(p)$, it follows that $p  \in \partial X$  and $D(p) < \outdeg_X (p)$. 
	
	Conversely, suppose $D$  is effective outside $q$  and satisfies (${\rm R2}^\prime$).  
	Given a non-constant function $f  \in R(\Gamma)$ achieving its maximum value at $q$,
let $X$ be a connected component of the set of points where $f$ achieves its minimum. By assumption 
(${\rm R2}^\prime$), there exists $p  \in \partial X$  such that $D(p) < \outdeg_X (p)$. Since $(\Delta (f)) (p) <-\outdeg_X (p)$, we have $(D + \Delta (f)) (p) \leq D(p)-\outdeg_X (p) <	0$ and thus $D + \Delta (f) \not\in |D|_q$.
\end{proof}

From the definition, one sees that if $D$ is a $q$-reduced divisor and
$D+\Delta(f)$ is effective outside $q$ for some non-constant function $f  \in R(\Gamma)$, then $f$ 
cannot have a global maximum at $q$.  It turns out that such an $f$ must in fact have a
global {\em minimum} at $q$. The following lemma is the analogue of Lemma~\ref{leastActionPrinc}.

\begin{lemma} [Principle of least action] \label{LeastActLemma}
 If  $D$  is  $q$-reduced  and  $f  \in R(\Gamma)$  is  a 
rational  function with $D + \Delta(f) \in |D|_q$, then $f$  has a global minimum at $q$. 
\end{lemma}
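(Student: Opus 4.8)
The plan is to argue by contraposition, mirroring the finite-graph argument of Lemma~\ref{leastActionPrinc}(a) and, even more directly, the converse half of Lemma~\ref{ReducedDefLem}. First I would dispose of the trivial case: if $f$ is constant it attains its minimum everywhere, in particular at $q$, so I may assume $f$ is non-constant. I would then suppose, for contradiction, that $f$ does \emph{not} attain its global minimum at $q$. Setting $m = \min_{x \in \Gamma} f(x)$, this means $m < f(q)$, so the minimum set $\{x \in \Gamma : f(x) = m\}$ is a proper (since $f$ is non-constant) closed subset of $\Gamma$ which does not contain $q$.

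Next I would let $X$ be a connected component of this minimum set, so that $X$ is a closed, connected, proper subset of $\Gamma$ with $q \notin X$ and $\partial X \ne \emptyset$. Because $D$ is $q$-reduced, Lemma~\ref{ReducedDefLem} allows me to invoke condition (${\rm R2}^\prime$): there is a point $p \in \partial X$ with $D(p) < \outdeg_X(p)$. The crux is then the purely local computation of the mass of $\Delta(f)$ at $p$. The tangent directions at $p$ split into those pointing into $X$, along which $f$ is locally constant equal to $m$ and hence has slope $0$, and the $\outdeg_X(p)$ directions pointing out of $X$, along each of which $f$ strictly increases and so, having integer slope, has slope $\ge 1$. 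Summing over all directions shows that the total outgoing slope of $f$ at $p$ is at least $\outdeg_X(p)$, and since $-\sigma_p(f)$ is precisely this total outgoing slope, I obtain $(\Delta f)(p) = \sigma_p(f) \le -\outdeg_X(p)$.

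Combining this estimate with $D(p) < \outdeg_X(p)$ then yields
\[
(D + \Delta(f))(p) \le D(p) - \outdeg_X(p) < 0 ,
\]
and because $p \in X$ forces $p \ne q$, this contradicts the hypothesis $D + \Delta(f) \in |D|_q$. Hence $f$ must have a global minimum at $q$, as claimed. I expect the only delicate point to be the slope count at $p$: one must check that every direction leaving $X$ carries strictly positive (hence $\ge 1$) slope, which relies on $X$ being a \emph{connected component} of the minimum set rather than an arbitrary subset. This is, however, exactly the reasoning already used in the converse direction of Lemma~\ref{ReducedDefLem}, so no genuinely new difficulty should arise.
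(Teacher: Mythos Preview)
Your proposal is correct and follows essentially the same route as the paper's own proof: assume $f$ does not attain its minimum at $q$, take (a connected component of) the minimum locus $X$, invoke condition (${\rm R2}^\prime$) from Lemma~\ref{ReducedDefLem} to find $p\in\partial X$ with $D(p)<\outdeg_X(p)$, bound $(\Delta f)(p)\le -\outdeg_X(p)$, and derive a contradiction. If anything, your version is slightly more careful than the paper's, which asserts that the full minimum set is connected rather than passing to a component as you do.
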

 
\begin{proof}	
 Suppose  not,  and  let  $X$    be  the  set  of  points  where  $f$
achieves  its minimum  value. Then  $X$ is  a  closed  connected  set  not containing  $q$.   By Lemma \ref{ReducedDefLem},  there  exists  $p \in  \partial X$   such  that  $D(p)  <  \outdeg_X(p)$. On  the 
other hand, we have $\Delta(f)(p) < -\outdeg_X (p)$, and thus $(D +\Delta(f)) (p) < 0$, contradicting the assumption that $D + \Delta(f) \in |D|_q $.
\end{proof}

The  importance  of  reduced  divisors  is  given  by  the  following 
theorem \cite{Hladkyetal,MK08}, which is analogous to 
the corresponding result for finite graphs (Corollary~\ref{energyCor} above) proved in \cite{BN1}:
	
\begin{theorem} \label{UniquenessThm}
 Fix $q \in \Gamma$.  There is a unique $q$-reduced divisor in each linear equivalence class of divisors on $\Gamma$. 
\end{theorem}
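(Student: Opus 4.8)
The plan is to reduce existence and uniqueness to an energy-minimization problem, exactly as in the finite-graph case (Corollary~\ref{energyCor}), the essential new difficulty being a compactness argument that is automatic for finite graphs. Throughout, for a divisor $D$ I write $\mathcal{E}_q(D) = \langle D - \deg(D)(q),\, D - \deg(D)(q) \rangle$, using the positive definite energy pairing \eqref{energy}. I would first dispose of \emph{uniqueness} directly from the principle of least action: suppose $D_1$ and $D_2$ are both $q$-reduced with $D_1 \sim D_2$, and write $D_2 = D_1 + \Delta(f)$ with $f \in R(\Gamma)$. Since $D_2$ is effective outside $q$ we have $D_2 \in |D_1|_q$, so Lemma~\ref{LeastActLemma} applied to $D_1$ shows that $f$ has a global minimum at $q$. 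Applying the same lemma to $D_2$ via $D_1 = D_2 + \Delta(-f) \in |D_2|_q$ shows that $-f$ has a global minimum at $q$, i.e. $f$ has a global maximum at $q$. Thus $f$ attains both its minimum and maximum at $q$, so $f$ is constant, $\Delta(f)=0$, and $D_1 = D_2$.

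For \emph{existence} I would first verify that $|D|_q$ is non-empty (the metric analogue of Lemma~\ref{lem:Dq}): given $D$, one transports all of its negative mass to $q$ along paths using finitely many functions in $R(\Gamma)$, producing an equivalent divisor that is effective outside $q$. The substance of the proof is then to minimize $\mathcal{E}_q$ over $|D|_q$ and to show, in the spirit of Theorem~\ref{energyThm}, that any minimizer $E^\ast$ is $q$-reduced. If $E^\ast$ failed condition (${\rm R2}'$) of Lemma~\ref{ReducedDefLem}, there would be a closed connected $X \not\ni q$ with $E^\ast(p) \ge \outdeg_X(p)$ for all $p \in \partial X$. Taking $u \in R(\Gamma)$ vanishing on $X$ with a global maximum at $q$ (an outward ``tent'', as in the proof of Lemma~\ref{ReducedDefLem}) and firing $X$ by a small amount, i.e. replacing $E^\ast$ by $E^\ast + t\,\Delta(u)$ for small $t>0$, keeps the divisor effective outside $q$ (since $E^\ast(p) \ge \outdeg_X(p) \ge 1$ forces $E^\ast(p)>0$ on $\partial X$) while the directional derivative $\frac{d}{dt}\mathcal{E}_q\big|_{t=0} = 2\langle E^\ast - \deg(E^\ast)(q),\, \Delta(u)\rangle = 2\sum_{p \ne q} E^\ast(p)\,(u(p)-u(q)) \le -2\,\mathrm{mass}_X(E^\ast) < 0$ is strictly negative. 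This contradicts minimality and is precisely the measure-theoretic counterpart of the identity $\mathcal{E}_q(D-\Delta(\chi_A)) = \mathcal{E}_q(D)-\lambda(A)$ from Proposition~\ref{energyproposition}(b).

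The step I expect to be the main obstacle is showing that the infimum of $\mathcal{E}_q$ over $|D|_q$ is actually \emph{attained}, since $|D|_q$ is now an infinite set rather than the essentially finite one occurring for finite graphs. Here I would take a minimizing sequence $E_n = D + \Delta(f_n)$ and argue that the energy bound forces the degree of the effective-outside-$q$ part of $E_n$ to stay bounded by some $N_0$, so these parts lie in the compact space $\bigcup_{N \le N_0} \mathrm{Sym}^N(\Gamma)$ of effective divisors of bounded degree; passing to a subsequence yields a limiting divisor $E^\ast$, effective outside $q$, of the correct degree. The delicate point is that linear equivalence need not be closed under this weak limit, so one cannot simply declare $E^\ast \sim D$. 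To close this gap I would control the potentials $f_n$ simultaneously with the divisors: using the maximum principle together with the least-action estimates underlying Lemma~\ref{LeastActLemma}, one normalizes the $f_n$ (say by $f_n(q)=0$) so that they are uniformly bounded with controlled slopes, extracts a uniformly convergent subsequence $f_n \to f \in R(\Gamma)$, and thereby certifies $E^\ast = D + \Delta(f) \in |D|_q$. With attainment established, the argument of the previous paragraph shows the minimizer is $q$-reduced, completing the existence proof; combined with the uniqueness already proved, this gives the theorem.
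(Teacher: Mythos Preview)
Your uniqueness argument (applying Lemma~\ref{LeastActLemma} in both directions) is correct and matches the paper's.  The variational step showing that a minimizer must be $q$-reduced is also morally right, though as written ``$E^\ast + t\,\Delta(u)$'' is not in $|D|_q$ for non-integer $t$, since $tu\notin R(\Gamma)$; this is easily repaired by using the family of tents $u_\epsilon$ of height $\epsilon$ from the proof of Lemma~\ref{ReducedDefLem}, for which $E^\ast+\Delta(u_\epsilon)\in|D|_q$ genuinely and the energy drops by at least $\lambda(X)\epsilon+O(\epsilon^2)$.

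The substantive gap is your attainment step.  The assertion that ``the energy bound forces the degree of the effective-outside-$q$ part of $E_n$ to stay bounded'' is false, and so is the fallback ``controlled slopes''.  On the unit circle with $q$ at $0$ and $D=0$, let $f_M\in R(\Gamma)$ have slope $M$ on $[0,\,1/(2M^{3})]$, slope $0$ on the middle arc, and slope $-M$ on $[1-1/(2M^{3}),\,1]$.  Then $E_M=\Delta(f_M)=M(p_1)+M(p_2)-2M(q)\in|0|_q$ is a \emph{minimizing} sequence, since $\E_q(E_M)=\int(f_M')^2=1/M\to 0$, yet its effective part has degree $2M\to\infty$ and its slopes $\pm M$ blow up.  So $|D|_q$ carries no compactness of the kind you describe, and nothing you have written forces the uniform limit of the $f_n$ to land in $R(\Gamma)$.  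The paper sidesteps this completely: after replacing $D$ by an \emph{effective} representative (Proposition~\ref{ExistProp}), it minimizes $\E_q$ not over $|D|_q$ but over the complete linear system $|D|$, which sits inside $\Meas_{+}^{d}(\Gamma)$ and is weak*-compact (Theorem~\ref{compactThm}); continuity of $\E_q$ then yields a minimizer $D_q\in|D|$.  Since the tent $u_\epsilon$ leaves the coefficient at $q$ unchanged for small $\epsilon$, any failure of $(\mathrm{R2}')$ would produce a competitor still lying in $|D|$ with strictly smaller energy, so $D_q$ is $q$-reduced.
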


We  will  give  a  new  proof  of  this  theorem  based  on  the following  energy minimization  result. For  a  divisor  $D  \in  \Div(\Gamma)$  of degree $d$,  define  the  {\em $q$-energy} of $D$ by 
\[\E_q(D) = \langle D - d(q), D - d(q)\rangle \ . \]

If $D =  \sum a_i (p_i)$ is effective and $p_i \ne q$ for all $i$, then $\E_q(D)$ is
the total energy dissipated in an electrical network in which $a_i$  units of current enter the network at $p_i$ 
and $d =\sum a_i$ units of current exit at $q$. 
	
\begin{theorem} \label{EnergyThm}
Let $D$  be  a  divisor  on $\Gamma$  which  is  effective outside $q$. Then $D$  is $q$-reduced if and only if it has smaller $q$-energy than every other divisor in $|D|_q$. 
\end{theorem}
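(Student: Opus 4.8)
The plan is to transcribe the proof of the finite-graph statement (Theorem~\ref{energyThm}) into the measure-theoretic language of this appendix, using Lemma~\ref{LeastActLemma} in place of Lemma~\ref{leastActionPrinc} and the formulas \eqref{energy} for the energy pairing in place of the matrix computation of Proposition~\ref{energyproposition}. A pleasant feature is that the statement only asserts that a \emph{given} divisor $D$ is the strict minimizer of $\E_q$ on $|D|_q$, so I never need to establish that a minimizer exists; that compactness question is what the deduction of Theorem~\ref{UniquenessThm} will require, but it does not enter here.

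The first step is to record the metric analogue of Proposition~\ref{energyproposition}(a): if $E = D + \Delta(f)$ with $f \in R(\Gamma)$ and $d = \deg(D) = \deg(E)$, then
\[
\E_q(E) = \E_q(D) + \int_\Gamma f \, d(D+E) - 2d\, f(q) \ .
\]
This I would obtain by expanding $\E_q(E) = \langle (D - d(q)) + \Delta(f),\, (D - d(q)) + \Delta(f)\rangle$ bilinearly, then evaluating the cross term $\langle \Delta(f), D - d(q)\rangle = \int_\Gamma f \, d(D - d(q))$ (which enters with a factor $2$) and the diagonal term $\langle \Delta(f), \Delta(f)\rangle = \int_\Gamma f \, d\Delta(f) = \int_\Gamma f \, d(E-D)$ by means of \eqref{energy}; recombining gives the stated identity. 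Since adding a constant to $f$ changes neither $\Delta(f)$ nor the right-hand side, I may normalize $f(q) = 0$, after which the identity reads $\E_q(E) - \E_q(D) = \sum_{p \ne q} f(p)(D+E)(p)$, the mass at $q$ having dropped out.

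For the forward implication, assume $D$ is $q$-reduced and let $E \in |D|_q$ with $E \ne D$; write $D = E + \Delta(f)$ and normalize $f(q) = 0$. Since $E = D + \Delta(-f) \in |D|_q$, Lemma~\ref{LeastActLemma} applied to $-f$ shows that $f$ attains its global maximum at $q$, so $f \le 0$; together with $(D+E)(p) \ge 0$ for $p \ne q$ this already gives $\E_q(D) \le \E_q(E)$. To make the inequality strict I would invoke Lemma~\ref{ReducedDefLem}: because $E \ne D$ the function $f$ is non-constant, so its minimum value is negative; letting $X$ be a connected component of the (closed, connected, $q$-avoiding) minimum set, (${\rm R2}^\prime$) furnishes $p \in \partial X$ with $D(p) < \outdeg_X(p)$. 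At such a $p$ one has $\Delta(f)(p) \le -\outdeg_X(p)$, hence $E(p) = D(p) - \Delta(f)(p) \ge \outdeg_X(p) > 0$, while $f(p) < 0$; thus the term $f(p)(D+E)(p)$ is strictly negative and $\E_q(D) < \E_q(E)$.

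The converse I would prove contrapositively. Suppose $D$ is effective outside $q$ but not $q$-reduced, so condition (R2) fails: there is a non-constant $f \in R(\Gamma)$ with a global maximum at $q$ and $E := D + \Delta(f) \in |D|_q$, where necessarily $E \ne D$. Normalizing $f(q) = 0$ gives $f \le 0$, and the energy identity yields $\E_q(E) - \E_q(D) = \sum_{p \ne q} f(p)(D+E)(p) \le 0$, exhibiting an element $E \ne D$ of $|D|_q$ whose energy does not exceed that of $D$; hence $D$ is not the strict minimizer. I expect the only real work to be the bookkeeping behind the displayed energy identity—justifying the pairing-to-integral passages of \eqref{energy} for the discrete measures $D - d(q)$ and $\Delta(f)$ and checking that the $q$-mass cancels after normalization—while the least-action and (${\rm R2}^\prime$) ingredients carry over from the finite case with no essential change.
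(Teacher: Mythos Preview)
Your proof is correct and follows the same overall route as the paper: expand the energy bilinearly, normalize $f(q)=0$, and use the least-action Lemma~\ref{LeastActLemma} to control the sign of $f$. The one substantive difference is how you obtain \emph{strict} inequality. The paper keeps the cross term and the diagonal term separate,
\[
\E_q(D')=\E_q(D)+2\int f\,d(D-d(q))+\E_q(\Delta(f)),
\]
and gets strictness in both directions for free from positive definiteness of the energy pairing ($\E_q(\Delta(f))>0$ whenever $f$ is non-constant). You instead fold the diagonal term into the single identity $\E_q(E)-\E_q(D)=\sum_{p\ne q}f(p)(D+E)(p)$ (the metric analogue of Proposition~\ref{energyproposition}(a)) and then, for the forward implication, invoke (${\rm R}2'$) to locate a boundary point of the minimum set at which the summand is strictly negative; for the converse you settle for $\E_q(E)\le\E_q(D)$, which is all the contrapositive needs. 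Both arguments are valid; the paper's is a line shorter, while yours mirrors the finite-graph proof more faithfully and avoids appealing to positive definiteness.
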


\begin{proof}
Suppose first that $D$ has minimal $q$-energy among all divisors in $|D|_q$. We  want  to  show  that  $D$  is $q$-reduced.     Let  $f   \in  R(\Gamma)$ be  a  non-constant rational  function with  a  global  maximum  at  $q$.      Without  loss of  generality, we  may  assume  that  $f (q)  =  0$.    We  claim  that  $D' := D + \Delta(f)$  does  not belong to  $|D|_q$  .  Indeed, if $D$   were in  $|D|_q$  , then we would have
\begin{align*}
\E_q(D) &= \E_q(D' ) - 2 \langle D' - d(q), \Delta(f)\rangle + \E_q(\Delta(f))\\ 
&> \E_q(D') - 2f (D')\\
&\geq  \E_q(D') 
\end{align*}
since  $f (D')  \leq 0$  by  our  assumptions  on  $f$    and  $D'$ . This contradicts  the minimality of $\E_q(D)$, and thus $D' \not\in |D|_q$   as claimed. 

For  the  other  direction,   we  need  to  prove  that  if  $D$   is $q$-reduced  and $D'= D + \Delta(f) \in |D|_q$  with  $f  \in R(\Gamma)$ non-constant, then $\E_q(D') > \E_q(D)$. 
As  before  we  may  assume  that  $f (q) = 0$. It  follows  from Lemma  \ref{LeastActLemma}  that $f (p) \geq 0$ for all $p  \in \Gamma$.  We thus have 
\begin{align*}
 \E_q(D') &= \E_q(D) + 2 \langle D' -d(q), \Delta(f) \rangle + \E_q(\Delta(f)) \\
&> \E_q(D) + 2f (D) \\
&\geq  \E_q(D) \\
\end{align*}
as desired.
\end{proof}

\begin{corollary} \label{atMostCor}
 For every $D \in \Div(\Gamma)$, there is at most one $q$-reduced divisor equivalent to $D$. 
\end{corollary}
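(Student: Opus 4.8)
The plan is to deduce the corollary directly from the energy-minimization characterization in Theorem~\ref{EnergyThm}, which does essentially all of the work. The key observation is that Theorem~\ref{EnergyThm} characterizes a $q$-reduced divisor as the \emph{strict} minimizer of $\E_q$ over its own linear system $|D|_q$, and a strict minimizer is automatically unique.

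First I would suppose, for contradiction, that $D_1$ and $D_2$ are two distinct $q$-reduced divisors, both linearly equivalent to the given divisor $D$ (and hence to each other). Since $q$-reduced divisors are effective outside $q$ by condition (R1), and since $\sim$ is symmetric and transitive, both $D_1$ and $D_2$ belong to the common linear system $|D_1|_q = |D_2|_q$. Here the only point requiring a word of care is the identification $|D_1|_q = |D_2|_q$: this holds because the two defining conditions (effective outside $q$, and equivalent to $D_1$) are unchanged when $D_1$ is replaced by the equivalent divisor $D_2$.

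Next I would apply Theorem~\ref{EnergyThm} twice. Because $D_1$ is $q$-reduced and $D_2 \in |D_1|_q$ with $D_2 \neq D_1$, the theorem yields $\E_q(D_1) < \E_q(D_2)$. Symmetrically, because $D_2$ is $q$-reduced and $D_1 \in |D_2|_q$ with $D_1 \neq D_2$, the theorem yields $\E_q(D_2) < \E_q(D_1)$. These two strict inequalities are mutually contradictory, forcing $D_1 = D_2$.

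I do not expect any genuine obstacle here: positive definiteness of the energy pairing (hence finiteness and well-definedness of $\E_q$ on the finitely supported divisors $D_1$ and $D_2$) is already established, and the substance of the uniqueness statement is entirely contained in the strictness of the inequality in Theorem~\ref{EnergyThm}. The entire argument is a one-line application of that theorem in both directions.
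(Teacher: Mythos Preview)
Your argument is correct and is exactly the deduction the paper intends: the corollary is stated immediately after Theorem~\ref{EnergyThm} without a separate proof, since two distinct $q$-reduced divisors in the same class would each have strictly smaller $q$-energy than the other. Your remark that $|D_1|_q = |D_2|_q$ because $D_1 \sim D_2$ is the only point needing justification, and you handle it correctly.
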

In order to deduce Theorem \ref{UniquenessThm} from Theorem \ref{EnergyThm}, we need some auxiliary results. 

\begin{proposition} \label{ExistProp}
Every divisor $D$  on $\Gamma$  is equivalent to a divisor which is effective outside $q$. 
\end{proposition}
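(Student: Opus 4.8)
The plan is to reduce the statement to a purely combinatorial input together with the construction of a single ``integer-slope potential'' attached to each point where $D$ is negative. First I would note that it suffices, for each point $p \ne q$, to exhibit a function $f_p \in R(\Gamma)$ whose Laplacian $\Delta(f_p)$ is effective outside $q$ (that is, $(\Delta f_p)(x) \ge 0$ for every $x \ne q$) and satisfies $(\Delta f_p)(p) \ge 1$. Granting this, write $D = \sum_i a_i (p_i)$ and set $f = \sum_{i\,:\, a_i < 0,\ p_i \ne q} N_i f_{p_i}$ for sufficiently large integers $N_i$. Then $\Delta(f) \ge 0$ away from $q$, so $D + \Delta(f)$ stays nonnegative at the points where $D$ is already nonnegative, while at each $p_i$ with $a_i < 0$ its coefficient is boosted by at least $N_i$; choosing $N_i \ge -a_i$ makes $D + \Delta(f)$ effective outside $q$. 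This reduces the proposition to the one-point construction.

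The conceptual model for $f_p$ is the fundamental potential kernel $j_q(\cdot, p)$, which satisfies $\Delta_x\, j_q(x,p) = \delta_p - \delta_q$ and is harmonic away from $\{p,q\}$; it therefore has nonnegative Laplacian mass at every point other than $q$ and mass $+1$ at $p$, which is exactly what we need. The one defect is that on a metric graph with cycles the slopes of $j_q(\cdot,p)$ are in general not integers, so $j_q(\cdot,p) \notin R(\Gamma)$. To produce an honest element of $R(\Gamma)$, I would fix a model of $\Gamma$ having $p$, $q$ and the support of $D$ among its vertices, choose a spanning tree $T$ rooted at $q$, and take the ``path coordinate'' $\tau_p$: the function equal to arc length along the $q$--$p$ path in $T$ and constant on each branch hanging off that path. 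On $T$ the slopes of $\tau_p$ lie in $\{0,1\}$, and one checks directly (exactly as for a finite tree) that its tree-Laplacian is $\delta_p - \delta_q$, the desired behaviour.

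The remaining work — and the step I expect to be the main obstacle — is to extend $\tau_p$ across the finitely many non-tree edges while remaining in $R(\Gamma)$ and creating no new negative mass at any point other than $q$. On a non-tree edge $e = \{u,w\}$ the prescribed endpoint values $\tau_p(u), \tau_p(w)$ need not differ by an integer multiple of the length of $e$, so a single affine piece of integer slope is impossible; instead I would interpolate by an integer-slope profile whose only interior breakpoints are local maxima (an ``up-then-down'' tent of slopes $\pm 1$ when $|\tau_p(w) - \tau_p(u)|$ is at most the length of $e$, and a monotone profile otherwise), which contributes only nonnegative mass to the interior of $e$. The delicate point is to keep track of the boundary slopes of these profiles at $u$ and $w$, since they perturb the vertex masses, and to verify that the total mass at every vertex $\ne q$ stays $\ge 0$. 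Here the finite-graph theory is the natural backbone: Lemma~\ref{lem:Dq} guarantees that the combinatorial fires pushing negative chips toward $q$ exist, and the case of a single cycle (where the construction is forced to deposit a compensating chip at a cut-locus point) shows both that such nonnegative surplus deposits are unavoidable and that they are harmless. Once $f_p$ is in hand the proposition follows as described, and it feeds directly into the energy-minimization proof of Theorem~\ref{UniquenessThm} through Theorem~\ref{EnergyThm}.
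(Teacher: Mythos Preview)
Your reduction to a single-point potential $f_p$ is a clean and genuinely different route from the paper's proof (which stratifies $\Gamma$ by the level function $\lambda_S$, builds slope-$1$ functions $f_i$ that fire level $i$ toward level $i{+}1$, and chooses the multiplicities $c_i$ inductively working back toward $q$). The reduction itself is valid: if such an $f_p$ exists for every $p\ne q$, the proposition follows immediately.

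The gap is in your construction of $f_p$. The spanning-tree function $\tau_p$ together with a concave integer-slope extension on each non-tree edge does \emph{not} in general yield $\Delta(f_p)\ge 0$ away from $q$, and your appeal to Lemma~\ref{lem:Dq} does not repair this. Concretely, let $\Gamma$ be the path $q\text{--}a\text{--}b\text{--}p$ with one extra parallel edge between $a$ and $b$, all edge lengths $1$. Every spanning tree is the path; $\tau_p$ takes values $0,1,2,3$ at $q,a,b,p$. On the spare $a\text{--}b$ edge the boundary data $1,2$ over length $1$ force the monotone slope-$1$ profile (your tent degenerates), and the extra outgoing slope $+1$ at $a$ gives $\Delta(f_p)(a)=-(-1+1+1)=-1$. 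Any profile with outgoing slope $\le 0$ at $a$ must instead have an interior local minimum, depositing negative mass there; so no concave extension on that edge works. A correct $f_p$ does exist here, e.g.\ $2\,j_q(\cdot,p)$ with slopes $2,1,1,2$, but your recipe does not find it. Finally, invoking Lemma~\ref{lem:Dq} is circular at this point: a firing function $g\in\M(G)$ on a model $G$ lifts to $R(\Gamma)$ only when $(g(w)-g(v))/\ell_e\in\ZZ$ on every edge, which is precisely the integrality obstruction you set out to overcome. The paper's level-set construction avoids this by producing slope-$1$ elements of $R(\Gamma)$ directly rather than lifting a combinatorial solution.
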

\begin{proof}

This is proved in \cite{Hladkyetal} and \cite{MK08}.  Here is a simple variant of the proof in  \cite{Hladkyetal} (along the lines of a similar argument in \cite{BN1}).  Let $S$ be the set of branch points of $\Gamma$ together with $q$ and all points in the support of $D$.  If
$p  \in \Gamma\backslash \{q\}$, define  $\lambda_S (p)$  (the  ``level  of $p$'' relative  to  $S$)  to  be  the 
minimal  number  of elements of  $S\backslash \{q\}$ contained in a path from $q$ to $p$. We define $\lambda_S (q)$ to be $-1$.   Let $k$  be  the  maximum  value  of  $\lambda_S(p)$  for $p \in \Gamma$. For  $i= 0, \cdots, k$ let $S_i  = \{p  \in \Gamma \, | \, \lambda_S (p) = i\}$.  For $i = 0,\cdots, k -1$, define $A_i$ 
to be the closure of $\{p  \in \Gamma \, | \, \lambda_S (p) < i\}$ and define $B_i  = \{p  \in \Gamma \, | \, \lambda_S (p) > i\}$.  We have $\partial A_i  = S_i$ and $\partial B_i    = S_{i+1}$.  For $i = 0, \cdots , k-1$, let $f_i$   be the unique element of $R(\Gamma)$ which is $0$ on $A_i$ and $1$ on $B_i$ .  Define $P_i = \Delta(f_i)$.  By definition, 
\[ 
 P_i  = \sum_{v \in S_{i+1}} {b_v^{(i)} (v)} - \sum_{v\in S_i}{a_v^{(i)} (v)} 
\]
where  $b_v^{(i)}  =  \outdeg_{B_i}(v)  \geq  1$  and $a_v^{(i)}  =  \outdeg_{A_i}(v)  \geq 1$.  It  is  straightforward to  verify  that  we  can  choose  positive  integers  $c_0, c_1, \cdots, c_{k-1}$ inductively,  starting  from  $c_{k-1}$ and  working  backwards,  in  such a way  that $P := \sum_{i=0}^{k-1}{c_iP_i}$ satisfies $D + P  \in |D|_q$.
\end{proof}

Let $\Meas_{+}^d  (\Gamma)$ be the space of nonnegative Borel measures of total mass $d$ on $\Gamma$.  By Alaoglu's theorem, $\Meas_{+}^d  (\Gamma)$ is compact in the weak* topology. 

\begin{lemma}  \label{uniformConvLemma}
Let  $\Gamma$  be  a  metric  graph.
\begin{enumerate}
\item[(a)] If  $f_n$ converges  to $f$  pointwise  on  $\Gamma$  with 
$f_n , f \in \BDV(\Gamma)$, then $\Delta(f_n)$  converges to $\Delta(f)$  in the weak* topology on $\Meas^0(\Gamma)$. 
\item[(b)] Conversely, fix  $q  \in \Gamma$ and suppose that  $\Delta(f_n)$  converges to $\Delta(f)$ 
in the weak* topology on $\Meas^0(\Gamma)$, with $f_n , f   \in \BDV(\Gamma)$ and $f_n (q)  =  f (q)  =  0$  for  all $n$.  
Then $f_n$ converges uniformly to $f$ on $\Gamma$.
\end{enumerate}
\end{lemma}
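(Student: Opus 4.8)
The plan is to treat both directions through the potential kernel $j_q(x,y)$ together with the self-adjointness of the Laplacian recorded in \eqref{LaplaceIntegral}. Throughout I use that $\Gamma$ is compact, so $j_q$ is uniformly continuous on $\Gamma \times \Gamma$, and that every $f \in \BDV(\Gamma)$ with $f(q)=0$ is recovered from its Laplacian by the stated inverse, namely $f(x) = \int_\Gamma j_q(x,y)\, d\Delta(f)(y)$.

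For part (a), I would first test the measures $\Delta(f_n)$ against an arbitrary $g \in \CPA(\Gamma)$. Since $\Delta(g)$ is then a fixed discrete measure supported on the finitely many breakpoints of $g$, the identity \eqref{LaplaceIntegral} (applied with the piecewise affine function $g$ and the $\BDV$ function $f_n$) gives $\int_\Gamma g\, d\Delta(f_n) = \int_\Gamma f_n\, d\Delta(g) = \sum_p \sigma_p(g)\, f_n(p)$, a finite sum. The hypothesis $f_n \to f$ pointwise forces this to converge to $\sum_p \sigma_p(g)\, f(p) = \int_\Gamma g\, d\Delta(f)$, so $\Delta(f_n) \to \Delta(f)$ when tested against every piecewise affine function, with no boundedness needed at this stage. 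Because $\CPA(\Gamma)$ is dense in $\mathcal{C}(\Gamma)$ and the measures $\Delta(f_n)$ all have total mass zero, a routine $\varepsilon/3$ argument would then upgrade this to convergence against every continuous test function, hence to weak* convergence in $\Meas^0(\Gamma)$.

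For part (b), I would start from the representation $f_n(x) = \int_\Gamma j_q(x,y)\, d\Delta(f_n)(y)$, valid since $f_n(q)=0$, and likewise for $f$. Fixing $x$, the function $y \mapsto j_q(x,y)$ is continuous, so the assumed weak* convergence $\Delta(f_n) \to \Delta(f)$ immediately yields $f_n(x) \to f(x)$; thus $f_n \to f$ pointwise. To promote this to uniform convergence I would establish equicontinuity of the family $\{f_n\}$: for $x, x' \in \Gamma$ one has $|f_n(x) - f_n(x')| \le \|j_q(x,\cdot) - j_q(x',\cdot)\|_\infty \cdot \|\Delta(f_n)\|$, where the first factor is controlled by the modulus of uniform continuity of $j_q$ and the second is uniformly bounded because a weak* convergent sequence of measures is norm-bounded (Banach--Steinhaus). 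With $\{f_n\}$ equicontinuous and pointwise convergent on the compact space $\Gamma$, an Arzel\`a--Ascoli argument gives the desired uniform convergence $f_n \to f$.

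The self-adjointness reduction and the equicontinuity estimate are routine; the one genuinely delicate point is the passage from a dense class of test functions to all of $\mathcal{C}(\Gamma)$ in part (a), which requires a uniform bound on the total variations $\|\Delta(f_n)\|$. In part (b) this bound is automatic from Banach--Steinhaus, whereas in part (a) it must be supplied by the ambient hypotheses (it holds, in particular, for the bounded sequences to which the lemma is applied), since pointwise convergence of the $f_n$ alone does not control $\|\Delta(f_n)\|$. Isolating and justifying this uniform bound is where I expect the real work to lie.
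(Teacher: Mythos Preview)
Your argument mirrors the paper's almost exactly. For (a) the paper also reduces to $\CPA$ test functions via \eqref{LaplaceIntegral} and then invokes density of $\CPA(\Gamma)$ in $\mathcal{C}(\Gamma)$; for (b) the paper simply cites the continuity of $j_q$ together with the integral representations $f_n(x)=\int j_q(x,y)\,d\Delta(f_n)(y)$, leaving the Banach--Steinhaus/equicontinuity details you spell out implicit.

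Your hesitation about the density step in part (a) is not misplaced---you have in fact been more careful than the paper. Passing from $\CPA$ test functions to all of $\mathcal{C}(\Gamma)$ genuinely requires a uniform bound on $\|\Delta(f_n)\|$, and pointwise convergence of the $f_n$ alone does not supply one. On the segment $[0,1]$, let $f_n$ be the tent of height $n^{-1/2}$ over the base $[\tfrac12-\tfrac1n,\,\tfrac12+\tfrac1n]$; then $f_n\to 0$ uniformly, but
\[
\Delta(f_n)=2\sqrt{n}\,\delta_{1/2}-\sqrt{n}\,\delta_{1/2-1/n}-\sqrt{n}\,\delta_{1/2+1/n},
\]
and testing against the continuous function $g(x)=|x-\tfrac12|^{1/2}$ gives $\int g\,d\Delta(f_n)=-2$ for every $n$. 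Thus $\Delta(f_n)\not\to 0$ weak*, and part (a) is false as literally stated. The paper only invokes part (b) downstream (in the proof of Theorem~\ref{compactThm}), so nothing else is affected; but your instinct that (a) needs an auxiliary boundedness hypothesis is exactly right, and no amount of further work will remove it.
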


\begin{proof}
(1) Since $\CPA(\Gamma)$ is dense in $\mathcal{C}(\Gamma)$, we just need to prove
that  $\int g \Delta (f_n) \rightarrow \int g \Delta (f)$     for every  $g  \in  \CPA(\Gamma)$.      But  this  is
equivalent   to   $\int f_n \Delta (g) \rightarrow \int f \Delta (g)$, which follows from pointwise convergence since $\Delta (g)$ is a discrete measure.

(2) This follows from the continuity of $j_q (x, y)$ together with the identities $f_n(x) = \int j_q (x, y)(\Delta f_n)(y)$ and 
$f(x) = \int j_q (x, y)(\Delta f)(y)$.
\end{proof}

Let $D$ be an effective divisor of degree $d$ on $\Gamma$.  We can topologize $|D|$ by thinking of it as a subset of $\Meas_{+}^d  (\Gamma)$ and giving it the subspace topology. 
	
\begin{theorem}  \label{compactThm}
The space $|D|$, with its weak* topology as subset of $\Meas_{+}^d  (\Gamma)$, is compact. 
\end{theorem}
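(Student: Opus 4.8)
The plan is to show that $|D|$ is \emph{closed} inside $\Meas_{+}^{d}(\Gamma)$; since the latter is weak*-compact by Alaoglu's theorem, closedness immediately gives compactness of $|D|$. Because $\Gamma$ is a compact metric space, the weak* topology on $\Meas_{+}^{d}(\Gamma)$ is metrizable, so it suffices to argue sequentially: given $E_n \in |D|$ with $E_n \to \mu$ weak*, I must produce $\mu \in |D|$. If $|D| = \emptyset$ there is nothing to prove, so I may assume $|D| \neq \emptyset$ and (replacing $D$ by one of its members, which does not change $|D|$) that $D$ itself is effective of degree $d$.

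First I would write each $E_n = D + \Delta(f_n)$ with $f_n \in R(\Gamma)$, normalized so that $f_n(q) = 0$. The effectivity $E_n \geq 0$ says exactly that $\Delta(f_n) \geq -D$ as measures; since $\deg \Delta(f_n) = 0$ and $D \geq 0$ has total mass $d$, the negative part of $\Delta(f_n)$ has mass at most $d$, hence so does its positive part, giving the uniform bound $\|\Delta(f_n)\| \leq 2d$. Because $f_n$ has integer slopes, each atom of $\Delta(f_n)$ has integer coefficient of absolute value $\geq 1$, so $f_n$ has at most $2d$ breakpoints; and a descent-path argument (follow $f_n$ in an increasing direction from a point of maximal slope until the slope turns nonpositive) shows every slope of $f_n$ is bounded in absolute value by $\|\Delta(f_n)\| \leq 2d$. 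Thus the $f_n$ are equi-Lipschitz with a fixed constant and, being normalized at $q$ on a graph of bounded diameter, are uniformly bounded.

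Now I would fix a model of $\Gamma$ whose vertex set contains $q$, the branch points, and the support of $D$, and pass to a subsequence along which, on each edge, the (constant) number of breakpoints, their positions, and the integer slopes on the resulting pieces all converge; breakpoints that collide in the limit simply disappear. The limit $f$ is then continuous, piecewise affine with integer slopes, i.e. $f \in R(\Gamma)$, and $f_n \to f$ uniformly. Applying Lemma~\ref{uniformConvLemma}(a) gives $\Delta(f_n) \to \Delta(f)$ weak*, hence $E_n = D + \Delta(f_n) \to D + \Delta(f)$ weak*; by uniqueness of weak* limits, $\mu = D + \Delta(f)$. Since $f \in R(\Gamma)$ this divisor is linearly equivalent to $D$, it is a genuine (integer, discrete) divisor, and it is effective because $\mu$ is a weak* limit of nonnegative measures; therefore $\mu \in |D|$, proving $|D|$ closed and hence compact.

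I expect the main obstacle to be the step guaranteeing that the extracted limit $f$ lies in $R(\Gamma)$ rather than merely in $\mathcal{C}(\Gamma)$: a uniform limit of integer-sloped functions can easily acquire non-integer slopes when arbitrarily many fine oscillations are allowed (a sawtooth of slopes $0$ and $1$ can limit to any rational slope). The essential point that rules this out is the \emph{uniform} control $\|\Delta(f_n)\| \leq 2d$ coming from effectivity, which caps both the number of breakpoints and the slope magnitudes; establishing these two bounds cleanly, and then organizing the subsequence extraction so that positions and slopes stabilize, is where the real content lies, while the appeal to Alaoglu and to Lemma~\ref{uniformConvLemma}(a) is routine.
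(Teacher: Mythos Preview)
Your argument is correct and reaches the same conclusion as the paper, but by a genuinely different route. The paper's first proof also shows $|D|$ is closed in $\Meas_{+}^{d}(\Gamma)$, but it proceeds in the opposite direction through Lemma~\ref{uniformConvLemma}: it first argues (via the Portmanteau theorem) that any weak* limit $\mu$ of the $E_n$ must itself be a discrete nonnegative integer measure, hence $\mu=\mu_D+\Delta(f)$ for some $f\in\CPA(\Gamma)$; then, knowing $\Delta(f_n)\to\Delta(f)$ weak*, it invokes part~(b) of the lemma to get $f_n\to f$ uniformly, and finally observes that uniform convergence of piecewise-affine functions forces convergence of directional derivatives, so the integer slopes persist and $f\in R(\Gamma)$. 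You instead establish \emph{a priori} combinatorial bounds on the $f_n$ (bounded total variation of $\Delta(f_n)$, hence bounded number of breakpoints and bounded integer slopes), extract a subsequence whose break data stabilize so that the limit $f$ is manifestly in $R(\Gamma)$, and then use part~(a) of the lemma to identify $\mu$ with $D+\Delta(f)$.

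What each buys: the paper's route is shorter and avoids any combinatorics, but it leans on the Portmanteau theorem and on the continuity of $j_q(x,y)$ (needed for Lemma~\ref{uniformConvLemma}(b)); it also offers a second proof via the Gathmann--Kerber polyhedral structure on $|D|$. Your route is more self-contained and makes the mechanism transparent---effectivity caps the complexity of the $f_n$ uniformly---but the ``descent-path'' justification of the slope bound is only sketched; to make it airtight at branch points you should either carry out the path argument carefully (tracking how outgoing slopes at a vertex absorb the drop) or simply note that $f_n(x)=\int j_q(x,y)\,d(\Delta f_n)(y)$ with $j_q(\cdot,y)$ $1$-Lipschitz, which immediately gives Lipschitz constant $\|\Delta f_n\|\le 2d$.
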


\begin{proof}
Since $\Meas_{+}^d (\Gamma)$   is  a  compact Hausdorff space by Alaoglu's  theorem,  it  suffices  to  show  that  $|D|$ is  a  closed subspace. Suppose $\mu_n \in |D|$ converges to $\mu \in \Meas_{+}^d (\Gamma)$.  
We want to show that $\mu \in |D|$. 
Since each $\mu_n$  is a discrete measure of total mass $d$ with nonnegative integer masses, it follows 
easily from Alaoglu's theorem and the ``Portmanteau theorem'' (see \cite[Proof of Proposition 9.5]{BakerRumelyBook})
that $\mu$ is also a discrete measure. 
Therefore $\mu  = \mu_D  + \Delta (f)$  for  some  $f \in \CPA(\Gamma)$. Since $\mu_n    = \mu_D  + \Delta (f_n)$  with $f_n  \in R(\Gamma)$ and $\mu_n \rightarrow \mu$, it suffices to 
prove that if $f_n \in R(\Gamma)$, $f \in \BDV(\Gamma)$, and $\Delta(f_n)$ converges to $\Delta(f)$, then $f  \in R(\Gamma)$.  We may assume without loss of generality that $f_n (q) = f(q) = 0$ for all $n$, so by Lemma \ref{uniformConvLemma}(b) $f_n$ converges uniformly to $f$  on $\Gamma$. But then every directional derivative of $f_n$ 
 converges to the corresponding directional derivative of $f$. Since the slopes of $f_n$ are integers, it follows that the slopes of $f$  are integers as well. Thus $f \in R(\Gamma)$ and $\mu \in |D|$ as desired. \qed

Alternatively, one can prove this using the results of \cite{GathmannKerber} as follows. 
Gathmann and Kerber show that $|D|$ can be given the structure of a
compact polyhedral complex by thinking of it as a subset of the compact space $\Gamma^{(d)} = \Gamma^{d} / S_d$  
(here $S_d$ denotes the symmetric group on $d$ letters).  The  natural  map  from  $|D|$  (with  the  Gathmann-Kerber topology)  to $\Meas_{+}^d  (\Gamma)$  is  a  continuous  injection  whose  image  is $|D|$ with  its  weak* topology. As  the  continuous  image  of  a  compact  space, $|D|$  is  therefore compact  in  the  weak* topology. (Note  that  since  a continuous  bijection	between  compact  Hausdorff  spaces  is  automatically  a homeomorphism,  it follows  that  the  Gathmann-Kerber  and  weak* topologies on $|D|$  are  the same.)
\end{proof}

\begin{remark}
One can also deduce Theorem~\ref{compactThm} from the results in \cite{MK08}. 
\end{remark}

\medskip

We now give the promised proof of Theorem \ref{UniquenessThm}. 

\medskip

\begin{proof}[Proof of Theorem \ref{UniquenessThm}]
By Proposition \ref{ExistProp} and the definition of reduced divisors,  we  can  assume  without  loss  of  generality  that  $D$  is effective. We endow $|D|$ with the weak*  topology as above.  The function $\E_q: |D| \rightarrow \RR$ is continuous since 
\[\E_q(D') = \int \int j_q(x, y) d\mu_{D'-d(q)}(x) d\mu_{D'-d(q)}(y)  \]
and $j_q (x, y)$ is jointly continuous in $x$ and $y$.  Since $|D|$ is compact,
$\E_q$  attains	its minimum value at some effective divisor $D_q \in |D|$. By
Theorem \ref{EnergyThm}, $D_q$ is $q$-reduced.  This proves the existence of a  $q$-reduced divisor
equivalent to $D$.  The uniqueness is Corollary \ref{atMostCor}. 
\end{proof}

\begin{remark}
The continuity of $\E_q: |D| \rightarrow \RR$ played an important role in the Proof of Theorem \ref{UniquenessThm}. 
More generally, the energy pairing is continuous as a function from $\Meas^0(\Gamma) \times \Meas^0(\Gamma)$ to $\RR$.  If we fix a reference point $q \in \Gamma$, this follows from the joint continuity of $j_q (x, y)$ as a function of $x$ and $y$ together with the formula
\[
\langle \mu, \nu \rangle= \int \int j_q (x, y)d\mu(x)d\nu(y) \ . 
\]
\end{remark}

\begin{remark}
The continuity of the energy pairing, together with the principle of least action (Lemma~\ref{LeastActLemma}), can be used to give a new proof of Amini's theorem \cite{Amini} that for $D$ effective, the map from $\Gamma$ to $|D|$ sending a point $q \in \Gamma$ to the unique $q$-reduced divisor $D_q$ equivalent to $D$ is continuous.

\end{remark}

We conclude this section with an analogue of Theorem~\ref{energyThm2} for metric graphs.
For a divisor $D\in \Div(\Gamma)$ of degree $d$, define its ``total potential'' $b_q(D)$ by
\begin{equation} \label{eq:metricbq}
b_q(D):=\int\int{j_q(x,y) d\mu_{D-d(q)}(x) dy} \ ,
\end{equation}
where $dy$ denotes the measure on $\Gamma$ whose restriction to each edge is Lebesgue measure. 

\begin{theorem} \label{EnergyThm2}
Let $D$  be  a  divisor  on $\Gamma$  which  is  effective outside $q$. Then $D$  is $q$-reduced if and only if 
$b_q(D) < b_q(D')$ for every divisor $D' \neq D$ in $|D|_q$. 
\end{theorem}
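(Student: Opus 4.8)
The plan is to mirror the proof of Theorem~\ref{energyThm2} for finite graphs, replacing the sum $\sum_v (f(v)-f(q))$ by the integral $\int_\Gamma (f(y)-f(q))\,dy$. The only genuinely new ingredient is the metric-graph analogue of Proposition~\ref{energyproposition2}(a), describing how $b_q$ changes under a chip-firing move $D \mapsto D+\Delta(f)$. To obtain it, I would start from \eqref{eq:metricbq}, use the symmetry $j_q(x,y)=j_q(y,x)$ together with Fubini to rewrite $b_q(D)=\int_\Gamma g_D(y)\,dy$, where $g_D(y):=\int_\Gamma j_q(x,y)\,d\mu_{D-d(q)}(x)$ is the unique potential satisfying $\Delta g_D = D-d(q)$ and $g_D(q)=0$ (the exact analogue of the function $g_q$ of Lemma~\ref{g_lemma}). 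For $D'=D+\Delta(f)$ with $f\in R(\Gamma)$ one has $\deg D'=d$ and $\Delta g_{D'}=\Delta(g_D+f)$, so after normalizing at $q$ we get $g_{D'}=g_D+f-f(q)$; integrating gives
\[
b_q\bigl(D+\Delta(f)\bigr) = b_q(D) + \int_\Gamma \bigl(f(y)-f(q)\bigr)\,dy .
\]
All integrals converge because $j_q$ is jointly continuous, hence bounded, on the compact space $\Gamma\times\Gamma$, and $dy$ has finite total mass.

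For the forward direction, suppose $D$ is $q$-reduced; then $D \in |D|_q$ by (R1). Let $D'\ne D$ lie in $|D|_q$ and write $D'=D+\Delta(f)$ with $f(q)=0$. By the principle of least action (Lemma~\ref{LeastActLemma}), $f$ attains a global minimum at $q$, so $f\ge 0$ on $\Gamma$. Since $D'\ne D$ the function $f$ is non-constant, hence strictly positive on a non-empty open set, so $\int_\Gamma f\,dy>0$ and the variation formula yields $b_q(D')>b_q(D)$.

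For the converse I would argue by contraposition: assuming $D$ is effective outside $q$ but not $q$-reduced, I will exhibit $D'\ne D$ in $|D|_q$ with $b_q(D')<b_q(D)$, contradicting strict minimality. By Lemma~\ref{ReducedDefLem} there is a closed connected set $X\subseteq\Gamma\setminus\{q\}$ with $D(p)\ge\outdeg_X(p)$ for every $p\in\partial X$. Reusing the cutoff function from the proof of Lemma~\ref{ReducedDefLem}, take $f\in R(\Gamma)$ equal to $0$ on $X$, rising with slope $1$ in each outward direction up to the value $\epsilon$, and equal to $\epsilon$ elsewhere, where $\epsilon>0$ is smaller than the distance from $q$ to $X$ and small enough that the $\epsilon$-neighbourhood of $X$ meets no branch point and no point of $\mathrm{supp}(D)$ outside $\partial X$. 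Then $f$ attains its global maximum $\epsilon$ at $q$, and one checks $D':=D+\Delta(f)$ is again effective outside $q$: at $p\in\partial X$ one has $\Delta(f)(p)=-\outdeg_X(p)$, so $D'(p)=D(p)-\outdeg_X(p)\ge 0$, while at the outer ends of the transition region $\Delta(f)\ge 0$ and elsewhere $\Delta(f)=0$. Since $f-f(q)\le 0$ with strict inequality on $X$, the variation formula gives $b_q(D')<b_q(D)$, the desired contradiction.

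The main obstacle will be the first step, namely setting up the variation formula cleanly: justifying the interchange of integration and, above all, correctly identifying $g_D$ as the normalized potential of $D-d(q)$, so that a chip-firing move collapses into the single additive term $\int_\Gamma (f-f(q))\,dy$. Once this formula is established, the two directions become essentially formal, with the converse relying only on the firing construction already assembled in the proof of Lemma~\ref{ReducedDefLem}.
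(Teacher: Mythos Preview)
Your argument is correct and follows exactly the route the paper indicates (``similar to the proofs of Theorem~\ref{energyThm2} and Theorem~\ref{EnergyThm}''); indeed the variation formula you derive is precisely the identity the paper computes later in the Luo section. One minor simplification: in the converse direction you can bypass $({\rm R2}')$ and the cutoff construction entirely by working directly with $({\rm R2})$ --- if $D$ is not $q$-reduced there is a non-constant $f\in R(\Gamma)$ with global maximum at $q$ and $D+\Delta(f)\in|D|_q$, and after normalizing $f(q)=0$ your variation formula immediately gives $b_q(D+\Delta(f))=b_q(D)+\int_\Gamma f\,dy<b_q(D)$.
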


The proof is similar to the proofs of Theorem~\ref{energyThm2} and Theorem \ref{EnergyThm}, so we omit it.


\subsection{Dhar's algorithm for metric graphs and Luo's theorem}

In this section, we briefly discuss some results from Ye Luo's paper \cite{Luoye} and their relation to the material in the
present paper.

\medskip

The idea behind Dhar's algorithm for metric graphs \cite[Algorithm 2.5]{Luoye} is the same as for non-metric graphs.
Let $\Gamma$ be a metric graph, fix a point $q \in \Gamma$, and let $D$ be a divisor on $\Gamma$ which is
effective outside $q$.
Think of the metric graph $\Gamma$ as being made of a flammable material.
At every point $p \in \Gamma$ with $p \neq q$, there are $D(p)$ firefighters, each of whom can control fires in a single direction leading into $p$.
A fire starts at $q$ and proceeds along each direction emanating from $q$; whenever there are fires approaching $p$ in more than $D(p)$ directions, the fire burns through $p$ and 
proceeds to burn in all directions emanating from $p$.  (In particular, if $e$ is a segment in $\Gamma$
consisting entirely of points $p$ with $D(p)=0$, then a fire starting at one endpoint of $e$ will burn through $e$
unobstructed.)  The divisor $D$ is $q$-reduced iff the fire eventually burns through all of $\Gamma$.

\medskip

For a formal statement of Dhar's algorithm for metric graphs and a proof of correctness, see \cite[Algorithm 2.5 and Lemma 2.6]{Luoye}.

\medskip

In order to compute the unique $q$-reduced divisor equivalent to a given divisor $D$, 
one can proceed along the lines of Algorithm~\ref{MainAlgorithm} above.  
Starting with a divisor $D$, the first step is to replace $D$ with an equivalent divisor which is effective
outside $q$.  This is accomplished by following the proof of Proposition~\ref{ExistProp}.
The second step is to iterate Dhar's algorithm until we reach a $q$-reduced divisor.  More specifically,
if $D$ is not yet reduced then by running Dhar's algorithm on $D$ we obtain a proper connected subset $Y \neq \Gamma$ of 
$\Gamma$ which contains $q$ (the ``burned portion'' of $\Gamma$).  Choose a connected component $X$ of the
complement of $Y$ and (as in the proof of Lemma~\ref{ReducedDefLem}) 
choose $\epsilon>0$ maximal with respect to the property that there is a rational 
function $f \in R(\Gamma)$ which is $0$ on $X$ and $\epsilon$ outside of an $\epsilon$-neighborhood of $X$, with
constant slope $1$ in between.  By construction, the divisor $D+\Delta(f)$ is still effective outside $q$.
Replacing $D$ by $D+\Delta(f)$ and iterating this procedure, one obtains (after finitely many iterations)
the unique $q$-reduced divisor equivalent to $D$.

\medskip

For a formal statement of this algorithm and a proof that it terminates after a finite number of iterations, 
see \cite[Algorithm 2.12 and Theorem 2.14]{Luoye}.

\medskip

In \eqref{eq:metricbq} we defined a metric graph analogue of the functional $b_q$ from \S\ref{ChipPotSec}.  
As in Proposition~\ref{energyproposition2}(b), this functional is a ``monovariant'' which decreases by an explicit function of $\epsilon$ when $D$ is replaced by $D + \Delta(f)$ (with $f$ as above):
\[
\begin{aligned}
\ b_q(D+\Delta(f))-b_q(D) &=\int \int j_q(x,y) \Delta_x(f) dy\\
&=\int \int f(x)\Delta_x(j_q(x,y)) dy\\
&=\int \int f(x) (\delta_y(x)-\delta_q(x)) dy\\
&=\int (f(y)-f(q)) dy\\ 
&=-(l(X)\epsilon + \frac{\lambda(X)}{2} \epsilon^2) \ ,
\end{aligned}
\]
where $l(X)$ is the total length of $X$ and $\lambda(X)$ is the size of the $(X,\Gamma\backslash X)$-cut.

It would be interesting to give explicit lower bounds for the $\epsilon$'s which can appear and thus obtain a running-time
analysis of Luo's Algorithm 2.12 similar to our Proposition~\ref{time}(c); we leave this as an open problem for future research.



\bibliographystyle{plain}
\bibliography{ChipPot11}

\begin{thebibliography}{10}

\bibitem{Aldous90}
D.~J. Aldous.
\newblock The random walk construction of uniform spanning trees and uniform
  labelled trees.
\newblock {\em SIAM J. Discrete Math.}, 3(4):450--465, 1990.

\bibitem{AlonKrasikovPeres}
N.~Alon, I.~Krasikov, and Y.~Peres.
\newblock Reflection sequences.
\newblock {\em Amer. Math. Monthly}, 96(9):820--823, 1989.

\bibitem{Amini}
O.~Amini.
\newblock Reduced divisors and embeddings of tropical curves.
\newblock 2010.
\newblock Preprint available at {\tt ar{X}iv:1007.5364v1}.

\bibitem{BacherHN97}
R.~Bacher, P.~de~la Harpe, and T.~Nagnibeda.
\newblock The lattice of integral flows and the lattice of integral cuts on a
  finite graph.
\newblock {\em Bull. Soc. Math. France}, 125(2):167--198, 1997.

\bibitem{BTW88}
P.~Bak, C.~Tang, and K.~Wiesenfeld.
\newblock Self-organized criticality.
\newblock {\em Phys. Rev. A (3)}, 38(1):364--374, 1988.

\bibitem{BX06}
M.~Baker and X.~Faber.
\newblock Metrized graphs, {L}aplacian operators, and electrical networks.
\newblock In {\em Quantum graphs and their applications}, volume 415 of {\em
  Contemp. Math.}, pages 15--33. Amer. Math. Soc., Providence, RI, 2006.

\bibitem{BN1}
M.~Baker and S.~Norine.
\newblock Riemann-{R}och and {A}bel-{J}acobi theory on a finite graph.
\newblock {\em Adv. Math.}, 215(2):766--788, 2007.

\bibitem{BakerRumely}
M.~Baker and R.~Rumely.
\newblock Harmonic analysis on metrized graphs.
\newblock {\em Canad. J. Math.}, 59(2):225--275, 2007.

\bibitem{BakerRumelyBook}
M.~Baker and R.~Rumely.
\newblock {\em Potential theory and dynamics on the {B}erkovich projective
  line}, volume 159 of {\em Mathematical Surveys and Monographs}.
\newblock American Mathematical Society, Providence, RI, 2010.

\bibitem{BenIsrael03}
A.~Ben-Israel and T.~N.~E. Greville.
\newblock {\em Generalized inverses}.
\newblock CMS Books in Mathematics/Ouvrages de Math\'ematiques de la SMC, 15.
  Springer-Verlag, New York, second edition, 2003.
\newblock Theory and applications.

\bibitem{BensonTetali08}
B.~A. Benson, D.~Chakrabarty, and P.~Tetali.
\newblock {$G$}-parking functions, acyclic orientations and spanning trees.
\newblock {\em Discrete Mathematics}, 310(8):1340--1353, 2010.

\bibitem{BiggsBook93}
N.~Biggs.
\newblock {\em Algebraic graph theory}.
\newblock Cambridge Mathematical Library. Cambridge University Press,
  Cambridge, second edition, 1993.

\bibitem{Biggs97}
N.~Biggs.
\newblock Algebraic potential theory on graphs.
\newblock {\em Bull. London Math. Soc.}, 29(6):641--682, 1997.

\bibitem{Biggs99}
N.~Biggs.
\newblock Chip-firing and the critical group of a graph.
\newblock {\em J. Algebraic Combin.}, 9(1):25--45, 1999.

\bibitem{BiggsTutte99}
N.~Biggs.
\newblock The {T}utte polynomial as a growth function.
\newblock {\em J. Algebraic Combin.}, 10(2):115--133, 1999.

\bibitem{WinklerBiggs97}
N.~Biggs and P.~Winkler.
\newblock Chip-firing and the chromatic polynomial.
\newblock Report LSE-CDAM-97-03, Centre for Discrete and Applicable
  Mathematics, London School of Economics, London, U.K., February 1997.

\bibitem{Lovasz91}
A.~Bj{\"o}rner, L.~Lov{\'a}sz, and P.~W. Shor.
\newblock Chip-firing games on graphs.
\newblock {\em European J. Combin.}, 12(4):283--291, 1991.

\bibitem{Bollobas98}
B.~Bollob{\'a}s.
\newblock {\em Modern graph theory}, volume 184 of {\em Graduate Texts in
  Mathematics}.
\newblock Springer-Verlag, New York, 1998.

\bibitem{Broder89}
A.~Broder.
\newblock Generating random spanning trees.
\newblock {\em Foundations of Computer Science, 30th Annual IEEE Symposium on},
  pages 442--447, 1989.

\bibitem{ChebikinPylyavskyy05}
D.~Chebikin and P.~Pylyavskyy.
\newblock A family of bijections between {$G$}-parking functions and spanning
  trees.
\newblock {\em J. Combin. Theory Ser. A}, 110(1):31--41, 2005.

\bibitem{ChinburgRumely}
T.~Chinburg and R.~Rumely.
\newblock The capacity pairing.
\newblock {\em J. Reine Angew. Math.}, 434:1--44, 1993.

\bibitem{chung99}
F.~Chung and R.~B. Ellis.
\newblock A chip-firing game and {D}irichlet eigenvalues.
\newblock {\em Discrete Math.}, 257(2-3):341--355, 2002.
\newblock Kleitman and combinatorics: a celebration (Cambridge, MA, 1999).

\bibitem{CMN96}
C.~J. Colbourn, W.~J. Myrvold, and E.~Neufeld.
\newblock Two algorithms for unranking arborescences.
\newblock {\em J. Algorithms}, 20(2):268--281, 1996.

\bibitem{Coppersmith}
D.~Coppersmith, U.~Feige, and J.~Shearer.
\newblock Random walks on regular and irregular graphs.
\newblock {\em SIAM J. Discrete Math.}, 9(2):301--308, 1996.

\bibitem{CoppersmithWinograd}
D.~Coppersmith and S.~Winograd.
\newblock Matrix multiplication via arithmetic progressions.
\newblock {\em Journal of Symbolic Computation}, 9(3):251 -- 280, 1990.
\newblock Computational algebraic complexity editorial.

\bibitem{CoriLeBorgne03}
R.~Cori and Y.~Le~Borgne.
\newblock The sand-pile model and {T}utte polynomials.
\newblock {\em Adv. in Appl. Math.}, 30(1-2):44--52, 2003.
\newblock Formal power series and algebraic combinatorics (Scottsdale, AZ,
  2001).

\bibitem{CoriRossinSalvy02}
R.~Cori, D.~Rossin, and B.~Salvy.
\newblock Polynomial ideals for sandpiles and their {G}r\"obner bases.
\newblock {\em Theoret. Comput. Sci.}, 276(1-2):1--15, 2002.

\bibitem{Dhar90}
D.~Dhar.
\newblock Self-organized critical state of sandpile automaton models.
\newblock {\em Phys. Rev. Lett.}, 64(14):1613--1616, Apr 1990.

\bibitem{Gabrielov93}
A.~Gabrielov.
\newblock Abelian avalanches and {T}utte polynomials.
\newblock {\em Phys. A}, 195(1-2):253--274, 1993.

\bibitem{GathmannKerber}
A.~Gathmann and M.~Kerber.
\newblock A {R}iemann-{R}och theorem in tropical geometry.
\newblock {\em Math. Z.}, 259(1):217--230, 2008.

\bibitem{Guenoche83}
A.~Gu{\'e}noche.
\newblock Random spanning tree.
\newblock {\em J. Algorithms}, 4(3):214--220, 1983.

\bibitem{Heuvel01}
J.~van~den Heuvel.
\newblock Algorithmic aspects of a chip-firing game.
\newblock {\em Combin. Probab. Comput.}, 10(6):505--529, 2001.

\bibitem{Hladkyetal}
J.~Hladk{\'y}, D.~Kr{\'a}l, and S.~Norine.
\newblock Rank of divisors on tropical curves.
\newblock 2007.
\newblock Preprint available at {\tt ar{X}iv:0709.4485}.

\bibitem{Rotor}
A.~E. Holroyd, L.~Levine, K.~M{\'e}sz{\'a}ros, Y.~Peres, J.~Propp, and D.~B.
  Wilson.
\newblock Chip-firing and rotor-routing on directed graphs.
\newblock In {\em In and out of equilibrium. 2}, volume~60 of {\em Progr.
  Probab.}, pages 331--364. Birkh\"auser, Basel, 2008.

\bibitem{KaltofenVillard04}
E.~Kaltofen and G.~Villard.
\newblock On the complexity of computing determinants.
\newblock {\em Comput. Complexity}, 13(3-4):91--130, 2004.

\bibitem{KM09}
J.~A. Kelner and A.~Madry.
\newblock Faster generation of random spanning trees.
\newblock In {\em Foundations of Computer Science, 2009., 50th Annual Symposium
  on}, pages 13--21, Oct 2009.

\bibitem{Kirchhoff1847}
G.~Kirchhoff.
\newblock {\"U}ber die {A}ufl{\"o}sung der {G}leichungen, auf welche man bei
  der {U}ntersuchung der linearen {V}erteilung galvanischer {S}tr{\"o}me
  gef{\"u}hrt wird.
\newblock {\em Ann. Phys. Chem.}, (72):497--508, 1847.

\bibitem{Kulkarni90}
V.~G. Kulkarni.
\newblock Generating random combinatorial objects.
\newblock {\em J. Algorithms}, 11(2):185--207, 1990.

\bibitem{WhatSandpile}
L.~Levine and J.~Propp.
\newblock What is {$\dots$} a sandpile?
\newblock {\em Notices Amer. Math. Soc.}, 57(8):976--979, 2010.

\bibitem{LiptonBlog}
R.~Lipton.
\newblock {\itshape A New Approach To Random Spanning Trees}.
\newblock Blog post (2009), available at
  \url{http://rjlipton.wordpress.com/2009/07/15/a-new-approach-to-random-spanning-trees/}

\bibitem{Lorenzini89}
D.~Lorenzini.
\newblock Arithmetical graphs.
\newblock {\em Math. Ann.}, 285(3):481--501, 1989.

\bibitem{Luoye}
Y.~Luo.
\newblock Rank-determining sets of metric graphs.
\newblock {\em J. Comb. Theory Ser. A}, 118:1775--1793, August 2011.

\bibitem{LyonsPeres09}
R.~Lyons and Y.~Peres.
\newblock {\em Probability on Trees and Networks}.
\newblock Cambridge University Press, 2011.
\newblock In preparation. Current version available at
  \url{http://mypage.iu.edu/\string~rdlyons/}.

\bibitem{MajumdarDhar}
S.N. Majumdar and D.~Dhar.
\newblock Equivalence between the abelian sandpile model and the $q \rightarrow
  0$ limit of the potts model.
\newblock {\em Physica A: Statistical Mechanics and its Applications},
  185(1--4):129--145, 1992.

\bibitem{Madhu11}
M.~Manjunath.
\newblock The rank of a divisor on a finite graph: geometry and computation.
\newblock 2011.
\newblock Preprint available at {\tt ar{X}iv:1111.7251v1}.

\bibitem{MK08}
G.~Mikhalkin and I.~Zharkov.
\newblock Tropical curves, their {J}acobians and theta functions.
\newblock In {\em Curves and abelian varieties}, volume 465 of {\em Contemp.
  Math.}, pages 203--230. Amer. Math. Soc., Providence, RI, 2008.

\bibitem{Mozes}
S.~Mozes.
\newblock Reflection processes on graphs and {W}eyl groups.
\newblock {\em J. Combin. Theory Ser. A}, 53(1):128--142, 1990.

\bibitem{PostnikovShapiro04}
A.~Postnikov and B.~Shapiro.
\newblock Trees, parking functions, syzygies, and deformations of monomial
  ideals.
\newblock {\em Trans. Amer. Math. Soc.}, 356(8):3109--3142 (electronic), 2004.

\bibitem{RaynaudPicard}
M.~Raynaud.
\newblock Sp\'ecialisation du foncteur de {P}icard.
\newblock {\em Inst. Hautes \'Etudes Sci. Publ. Math.}, (38):27--76, 1970.

\bibitem{Saunders}
D.~Saunders and Z.~Wan.
\newblock Smith normal form of dense integer matrices fast algorithms into
  practice.
\newblock In {\em Proceedings of the 2004 international symposium on Symbolic
  and algebraic computation}, ISSAC '04, pages 274--281, New York, NY, USA,
  2004. ACM.

\bibitem{FarbodDLP09}
F.~Shokrieh.
\newblock The monodromy pairing and discrete logarithm on the {J}acobian of
  finite graphs.
\newblock {\em J. Math. Cryptol.}, 4(1):43--56, 2010.

\bibitem{Smirnov}
S.~Smirnov.
\newblock How do research problems compare with {IMO} problems?
\newblock In D.~Schleicher and M.~Lackmann, editors, {\em An Invitation to
  Mathematics}, pages 71--83. Springer Berlin Heidelberg, 2011.

\bibitem{Tardos}
G.~Tardos.
\newblock Polynomial bound for a chip firing game on graphs.
\newblock {\em SIAM J. Discrete Math.}, 1(3):397--398, 1988.

\bibitem{WegertReiher}
E.~Wegert and C.~Reiher.
\newblock Relaxation procedures on graphs.
\newblock {\em Discrete Appl. Math.}, 157(9):2207--2216, 2009.

\bibitem{Wilson96}
D.~B. Wilson.
\newblock Generating random spanning trees more quickly than the cover time.
\newblock In {\em Proceedings of the {T}wenty-eighth {A}nnual {ACM} {S}ymposium
  on the {T}heory of {C}omputing ({P}hiladelphia, {PA}, 1996)}, pages 296--303,
  New York, 1996. ACM.

\bibitem{WinklerBook}
P.~Winkler.
\newblock {\em Mathematical puzzles: a connoisseur's collection}.
\newblock A K Peters Ltd., Natick, MA, 2004.

\end{thebibliography}

\end{document}